\documentclass[12pt,a4paper,dvipsnames]{article}

\usepackage[utf8]{inputenc}
\usepackage[english]{babel}
\usepackage{amsmath}
\usepackage{amsthm}
\usepackage{amssymb}
\usepackage{geometry}\geometry{margin=1.5in}
\usepackage{a4wide}
\usepackage{bm}
\usepackage{microtype}

\usepackage{pgf,tikz,pgfplots}
\pgfplotsset{compat=1.15}
\usetikzlibrary{shapes.geometric}
\usetikzlibrary{patterns}
\usepackage{subcaption}

\def\normaledge{1.2}
\definecolor{edgeblack}{rgb}{0.25,0.25,0.25}
\definecolor{vertexblack}{rgb}{0.2,0.2,0.2}

\usepackage{enumitem}
\setlist[enumerate]{itemsep=0mm}

\usepackage{xcolor}
\newcommand\myshade{85}
\colorlet{myurlcolor}{Aquamarine}
\usepackage{hyperref}
\hypersetup{
  linkcolor  = black,
  citecolor  = black,
  urlcolor   = myurlcolor!\myshade!black,
  colorlinks = true,
}
\usepackage{tikz, graphicx, standalone, caption, subcaption, wrapfig}

\theoremstyle{definition}
\newtheorem{theorem}{Theorem}[section]

\newtheorem{lemma}[theorem]{Lemma}
\newtheorem*{lemma*}{Lemma}
\newtheorem*{lemma''*}{``Lemma''}

\newtheorem{corollary}[theorem]{Corollary}
\newtheorem{question}[theorem]{Question}

\newtheorem{example}[theorem]{Example}
\newtheorem*{problem*}{Problem}
\newtheorem{definition}[theorem]{Definition}
\theoremstyle{remark}
\newtheorem*{claim*}{Claim}

\DeclareMathOperator{\rk}{rk}
\DeclareMathOperator{\rank}{rank}

\DeclareMathOperator{\Span}{span}

\DeclareMathOperator{\dof}{dof}

\newcommand{\rigiditymatroid}[1]{\mathcal{R}_d(#1)}

\newcommand{\RR}{\mathbb{R}}

\newcommand{\CC}{\mathbb{C}}
\newcommand{\ZZ}{\mathbb{Z}}
\newcommand{\QQ}{\mathbb{Q}}

\newcommand{\varpsi}{\psi}

\newcommand{\norm}[1]{\left\lVert#1\right\rVert}

\newcommand{\affineclass}{\mathcal{A}}
\newcommand{\gaussfiber}{F}

\newcommand {\p}{p}
\newcommand {\q}{q}
\newcommand {\x}{x}

\newcommand{\Rd}{\mathcal{R}_d}

\title{\bf Globally rigid graphs are fully reconstructible}

\author{D\'aniel Garamv\"olgyi\thanks{Department of Operations Research,
E\"otv\"os University, and the 
MTA-ELTE Egerv\'ary Research Group on Combinatorial Optimization,
P\'azm\'any P\'eter s\'et\'any 1/C, 1117 Budapest, Hungary.
e-mail: {\tt daniel.garamvolgyi@ttk.elte.hu}}
\and
Steven J. Gortler\thanks{Harvard School of Engineering and Applied Sciences,  33 Oxford St., Cambridge,
MA 02138 USA. e-mail: {\tt sjg@cs.harvard.edu}}
\and
Tibor Jord{\'a}n\thanks{Department of Operations Research,
E\"otv\"os University, and the 
MTA-ELTE Egerv\'ary Research Group on Combinatorial Optimization,
P\'azm\'any P\'eter s\'et\'any 1/C, 1117 Budapest, Hungary.
e-mail: {\tt tibor.jordan@ttk.elte.hu}} 
}

\date{June 2, 2022}

\begin{document}
\maketitle

\begin{abstract}
A $d$-dimensional framework is a pair $(G,p)$, where $G=(V,E)$ is a graph and $p$ is
a map from $V$ to  $\RR^d$. The length of an edge $uv\in E$ in $(G,p)$ is the distance
between $p(u)$ and $p(v)$. The framework is said to be globally rigid in
$\RR^d$ if the graph $G$ and its edge lengths uniquely determine $(G,p)$, up to congruence.
%
%
A graph $G$ is called globally rigid in $\RR^d$ if every $d$-dimensional
generic framework $(G,p)$ is globally rigid.

In this paper, we consider the 
problem of reconstructing a graph from the set of
edge lengths arising from a generic framework.
Roughly speaking, 
a graph $G$ is strongly reconstructible in
$\CC^d$ 
if the set of (unlabeled) edge lengths of any generic framework $(G,p)$ in $d$-space, along with the number of vertices of $G$, uniquely determine both $G$ and the association between the edges of $G$ and the set of edge lengths. 
It is known 
that if $G$ is  globally rigid in $\RR^d$ on at least $d+2$ vertices, then it is strongly reconstructible in $\CC^d$.
We 
strengthen this result and show that under the same conditions, $G$ is in fact fully reconstructible in $\CC^d$,
which means that the set of edge lengths alone is sufficient to uniquely reconstruct $G$, without any constraint on the
number of vertices (although still under the assumption that the edge lengths come from a generic realization).
%

As a key step in our proof,
we also
prove that if $G$ is globally rigid in $\RR^d$ on at least $d+2$ vertices, then the $d$-dimensional generic
rigidity matroid of $G$ is connected. 
Finally, we provide new families of fully reconstructible graphs and use them to answer some questions regarding unlabeled reconstructibility posed in recent papers.
\end{abstract}

\section{Introduction}

A \textit{($d$-dimensional) framework} is a pair $(G,p)$ where $G = (V,E)$ is a graph and $p : V \rightarrow \RR^d$ is a map
	that assigns a point in $\RR^{d}$ to each vertex of $G$. The {\it length} of an edge $uv$
	in $(G,p)$ is the Euclidean distance between $p(u)$ and $p(v)$. We also call $(G,p)$
	a {\it realization} of $G$ in $\RR^{d}$. The framework is {\it generic}
	if the set of the coordinates of its points is algebraically independent over $\QQ$.
	We say that a $d$-dimensional framework $(G,p)$ is {\it globally rigid}
	if every other realization $(G,q)$ of $G$ in $\RR^d$ in which corresponding edges have the same length
is congruent to $(G,p)$. That is, the graph $G$ and its edge lengths in $(G,p)$ uniquely determine the pairwise distances of all vertices in $(G,p)$. It is known that for generic $d$-dimensional frameworks, global rigidity depends only on the graph $G$, for all $d\geq 1$. We say that $G$ is \emph{(generically) globally rigid in $\RR^d$} if every (equivalently, if some) generic realization of $G$ in $\RR^d$ is globally rigid.
In the rest of this section we give a brief overview of our main results. Most of the definitions and more details are given
in the next section.

In the context of rigidity theory, unlabeled reconstruction is the study of what 
combinatorial and geometric information is determined by 
the (multi)-set of edge lengths arising from some $d$-dimensional framework $(G,p)$.
In~\cite{GTT}, it was shown
that if $(G,p)$ is a generic globally rigid framework with $n$ vertices in $\RR^d$, where $n \geq d+2$,
then there can be no distinct (not necessarily generic) realization $(H,q)$ of 
any graph $H$ with $n$ vertices
in $\RR^d$ that
produces the same edge lengths, up to trivialities. This result
is essentially tight: if $H$ is allowed to have more vertices or if
$G$ is not globally rigid, then
$p$ cannot be determined.
To prove this result, it was sufficient
to study the following, related graph reconstruction question.

We say that a graph $G$ is {\it strongly reconstructible in
$\CC^d$} if whenever a pair of generic frameworks $(G,p)$ and $(H,q)$ in $\CC^d$
have the same set of edge lengths and the same number of vertices, we have that
$G$ is isomorphic to $H$ and the corresponding edges have the same length. 
In~\cite{GTT}, it was shown that for $d \geq 2$, globally rigid graphs in $\RR^d$ on at least $d + 2$ vertices
are strongly reconstructible in $\CC^d$.
Although this 
result was sufficient to answer
the original question of~\cite{GTT}, as a pure reconstruction question, the 
dependence on $n$ remains unsatisfying.
To this end, 
we call a graph $G$ {\it fully reconstructible in $\CC^d$} if whenever a pair of generic frameworks $(G,p)$ and $(H,q)$ in $\CC^d$
have the same set of edge lengths, but not necessarily the same number of vertices, we have that
$G$ is isomorphic to $H$ and the corresponding edges have the same length. We stress that in the above definition we require both $(G,p)$ and $(H,q)$ to be generic. This, or some other non-degeneracy condition, is necessary since the edge lengths of $(G,p)$ can be realized by an arbitrary forest $H$ on $|E(G)|$ edges (but the realization $(H,q)$ obtained in this way will usually be non-generic). 

As we shall see, both strong reconstructibility and full reconstructibility can be (perhaps more naturally) stated as a uniqueness condition not on the generic realizations of $G$, but rather on a certain complex variety associated to $G$. This variety, which we call the $d$-dimensional measurement variety of $G$ and denote by $M_{d,G}$, is the Zariski-closure of the set of all vectors arising as the squared edge lengths of $d$-dimensional realizations of $G$.
Roughly speaking, $G$ is strongly reconstructible in $\CC^d$ if whenever there is a bijection $\varpsi: E(G) \rightarrow E(H)$ that sends $M_{d,G}$ to $M_{d,H}$, where $H$ has the same number of vertices as $G$, then $G$ and $H$ are isomorphic and $\varpsi$ is consistent with an isomorphism between them (for detailed definitions see Subsection \ref{subsec:unlabeled} and Theorem \ref{theorem:strongmeasurereconstructibility}). Full reconstructibility has a similar characterization, with the only difference being that we do not require $H$ to have the same number of vertices as $G$. Thus, strong reconstructibility in $\CC^d$ asserts that $G$, in a sense, is uniquely determined by $n,d$ and $M_{d,G}$, while full reconstructibility means that $G$ is uniquely determined by $d$ and $M_{d,G}$. 

In \cite{GTT}, the question was posed
whether global rigidity in $\RR^d$ implies full reconstructibility in $\CC^d$.
For $d=2$, the question was answered affirmatively in \cite{GJ}.
In this paper, as one of our main results, 
we substantially strengthen the previous results and show that for all $d\geq 2$,
$$\hbox{{\em if $G$ is globally rigid in $\RR^d$ on $n\geq d+2$ vertices, then $G$ is fully reconstructible in $\CC^d$.}}$$

Results in~\cite{GJ} show that for $d = 2$, this is tight:
if a graph is not globally rigid in $\RR^2$, then
it is not even strongly reconstructible in $\CC^2$. The $d=1$ case is slightly different but also fully
characterizable using $3$-connectivity.
For $d \geq 3$,  a characterization of full reconstructibility 
seems more elusive.
In particular, we show that global rigidity in $\RR^d$ is not 
necessary for full reconstructibility in $\CC^d$. 
We also prove some positive and negative results
 regarding possible sufficient and possible necessary 
conditions for strong and full reconstructibility. 
These answer 
a number
of questions that were posed in \cite{GTT}
and \cite{GJ}.

To prove our result on full reconstructibility we also prove a combinatorial theorem, interesting on its own right. We say that a graph is \emph{$\Rd$-connected} if its $d$-dimensional (generic) rigidity matroid is connected (see the next section for detailed definitions). 
A combinatorial characterization of globally rigid graphs in $\RR^d$ is known only for $d=1,2$, and is a major open problem for $d\geq 3$.
In higher dimensions, Hendrickson's theorem (see Theorem \ref{theorem:hendrickson}) gives combinatorial necessary conditions that link global rigidity to connectivity
and local rigidity properties of $G$. 
In this paper, as another main result, we strengthen one of Hendrickson's necessary conditions (redundant rigidity) by proving that for all $d \geq 1$,
$$\hbox{{\em if $G$ is globally rigid in $\RR^d$ on $n\geq d+2$ vertices, then it is $\Rd$-connected.}}$$ 
This result may lead to a better understanding of higher dimensional global rigidity. In particular, we use it to find new examples of so-called \emph{$H$-graphs}, graphs that satisfy Hendrickson's conditions but are not globally rigid.

The rest of the paper is laid out as follows. In Section \ref{section:prelims}, we recall the definitions and results from rigidity theory and algebraic geometry that we shall use throughout the paper. Section \ref{section:main} contains our main results: after making some structural observations about the measurement variety of graphs, we show that globally rigid graphs in $\RR^d$ on at least $d+2$ vertices are $\Rd$-connected (Theorem \ref{theorem:mconnected}) and, for $d \geq 2$, fully reconstructible in $\CC^d$ (Theorem \ref{theorem:fullyreconstructible}). In Section \ref{section:examples}, we illustrate the results of the previous section with several examples. In particular, we use Theorem \ref{theorem:mconnected} to give new examples of $H$-graphs. We also answer questions from \cite{GJ} and \cite{GTT} related to the unlabeled reconstructibility problem, as well as pose new open questions. Finally, in Section \ref{section:mseparable} we prove some new results regarding $\Rd$-connected graphs.

\section{Preliminaries}\label{section:prelims}

We start by fixing some conventions. In the following, graphs will be understood to be simple, that is, without parallel edges and loops. To avoid some trivialities, we shall also implicitly assume that every graph considered has at least one edge (and thus at least two vertices). For a graph $G = (V,E)$, we shall use $\RR^E$ and $\CC^E$ to denote the $|E|$-dimensional real (complex, respectively) Euclidean space with axes labelled by the edges of $G$. We shall also often refer to the \emph{configuration spaces} $\RR^{nd}$ and $\CC^{nd}$, where $n$ denotes the number of vertices of $G$ and $d \geq 1$ is some dimension. We shall really think of these spaces as $(\RR^d)^V$ and $(\CC^d)^V$, i.e.\ as $n$-tuples of $d$-dimensional vectors, indexed by the vertices of $G$. Nonetheless, as it is less cumbersome, we shall use the notation $\RR^{nd}$ and $\CC^{nd}$. 

\subsection{Real and complex frameworks}

Let $G = (V,E)$ be a graph
on $n$ vertices and $d \geq 1$ some fixed integer. A \textit{$d$-dimensional
  realization} of $G$ is a pair $(G,p)$ where 
$p : V \rightarrow \RR^d$ maps the vertices of $G$ into Euclidean space. We call such a map $p$ (or equivalently, a point $p = (p_v)_{v \in V}$ in $\RR^{nd}$) a \textit{configuration} and we say that the pair $(G,p)$ is a \textit{framework}. Two $d$-dimensional frameworks $(G,p)$ and $(G,q)$ are \textit{equivalent} if $\norm{p(u) - p(v)} = \norm{q(u) - q(v)}$ for every edge $uv \in E$, and \textit{congruent} if the same holds for every pair of vertices $u,v \in V$. Here $\norm{\cdot}$ denotes the Euclidean norm.

A framework is \textit{(locally) rigid} if every continuous motion of the
vertices which preserves the edge lengths takes it to a congruent framework,
and \textit{globally rigid} if every equivalent framework is congruent to it. 

We say that a configuration $\p \in \RR^{nd}$ is \textit{generic} if its $n \cdot
d$ coordinates are algebraically independent over $\QQ$. It is known that 
in any fixed dimension $d$, both local and global rigidity are generic properties of the underlying graph, in the sense that either every generic $d$-dimensional framework is locally/globally rigid or none of them are (see  
\cite{connelly,GHT}). Thus, we say that a graph is \emph{rigid} (respectively \emph{globally rigid}) in $d$ dimensions if every (or equivalently, if some) generic $d$-dimensional realization of the graph is rigid (resp.\ globally rigid).

Let $G = (V,E)$ be a graph on $n$ vertices and $d \geq 1$. The function $m_{d,G} : \RR^{nd} \rightarrow \RR^E$ mapping each realization of $G$ to the sequence of its Euclidean squared edge lengths is called the \emph{rigidity map} or \emph{edge measurement map} of $G$ in $d$ dimensions. That is, for a $d$-dimensional realization $(G,p)$ of $G$, the coordinate of $m_{d,G}(\p)$ corresponding to the edge $uv \in E$ is $\norm{p(u) - p(v)}^2$. 

Analogously to the real case, we define a $d$-dimensional \textit{complex framework} to be a pair $(G,p)$, where $G = (V,E)$ is a graph and $p : V \rightarrow \CC^d$ is a complex mapping.
Given a framework $(G,p)$ in $\CC^d$ and a pair of vertices $u,v$ in $G$, we define the \emph{complex squared distance} of $u$ and $v$ in $(G,p)$ by
\begin{equation*}
    m_{uv}(\p) = \left(p(u) - p(v)\right)^{T} \cdot \left(p(u) - p(v)\right) =  \sum_{k=1}^d (p(u)_k - p(v)_k)^2,
\end{equation*}
where $k$ indexes over the $d$ dimension-coordinates. Note that in this definition we do not use conjugation, and thus this is \emph{not} the usual distance of $p(u)$ and $p(v)$. We also note that the mapping $m_{uv} : \CC^{nd} \rightarrow \CC$ depends on $G$ and $d$, but since these will always be clear from the context, we shall omit them from our notation.  

For an edge $e = uv$ of $G$, we say that $m_{uv}(p)$ is the \emph{complex squared length} of the edge in $(G,p)$. For real frameworks, this coincides with the
usual (Euclidean) squared length, so we can extend $m_{d,G}$ to a $\CC^{nd} \rightarrow \CC^E$ function by letting
\begin{equation*}
m_{d,G}(p) = \big(m_{uv}(p)\big)_{uv \in E}.
\end{equation*}
We say, as in the real case, that two frameworks $(G,\p)$ and $(G,\q)$ are
\textit{equivalent} if $m_{d,G}(\p) = m_{d,G}(\q)$, and they
are \textit{congruent} if $m_{d,K_V}(p) = m_{d,K_V}(q)$, where $K_V$ is the complete graph on the vertex set $V$. A  
configuration  $\p \in \CC^{nd}$ is, again, generic, if the coordinates of $\p$
are algebraically independent over $\QQ$. A point $\p \in \RR^{nd}$ is generic as a real configuration precisely if it is generic as a complex one. 

Although we will not explicitly need them, we note that one can define the analogues of rigidity and global rigidity for complex frameworks. It turns out that, as in the real case, the (global) rigidity of generic complex frameworks only depends on the underlying graph, and the corresponding graph property of being ``(globally) rigid in $\CC^d$'' is equivalent to being (globally) rigid in $\RR^d$, see \cite{GT,GTT}.

It follows from the definitions  that globally rigid graphs are rigid. 
The following much stronger necessary conditions of global rigidity 
 are due to Hendrickson \cite{hend}.
We say that a graph is \textit{redundantly rigid} in a given dimension if it
remains rigid after deleting any edge. A graph is \emph{$k$-connected} for some $k
\geq 2$ if it has at least $k+1$ vertices and it remains connected after
deleting any set of less than $k$ vertices. 

\begin{theorem}\cite{hend}\label{theorem:hendrickson}
Let $G$ be a graph on at least $d+2$ vertices for some $d \geq 1$. Suppose that $G$ is globally rigid in $\RR^d$. Then $G$ is $(d+1)$-connected and redundantly rigid in $\RR^d$.
\end{theorem}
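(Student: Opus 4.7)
I would prove the two conclusions separately, both by contradiction, by constructing a framework $(G,q)$ equivalent but not congruent to a generic realization $(G,p)$.

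For the $(d+1)$-connectivity part, the plan is a reflection argument. Suppose $G$ has a vertex cut $S$ with $|S|\leq d$. Since $n\geq d+2$, the complement $V\setminus S$ splits into two nonempty sets $A, B$ with no edges between them in $G$. Fix a generic realization $(G,p)$. By genericity and $|S|\leq d$, the points $p(S)$ are affinely independent and span an affine subspace of dimension $|S|-1\leq d-1$; let $H\subseteq \RR^d$ be any hyperplane containing this span. I would then define
\begin{equation*}
q(v) := \begin{cases} p(v) & \text{if } v\in A\cup S, \\ \sigma_H(p(v)) & \text{if } v\in B, \end{cases}
\end{equation*}
where $\sigma_H$ denotes reflection across $H$. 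Because $\sigma_H$ is an isometry fixing $H$ pointwise and because $A$ and $B$ are disconnected in $G$, all edge lengths of $(G,p)$ are preserved in $(G,q)$. Genericity of $p$ forces $p(B)$ off $H$, so $(G,q)$ cannot be congruent to $(G,p)$, contradicting global rigidity.

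For the redundant rigidity part, suppose $G-e$ is not rigid in $\RR^d$ for some edge $e=uv$. Fix a generic realization $(G,p)$, which is globally rigid. Non-rigidity of $G-e$ at the generic configuration $p$ yields a nontrivial real-analytic flex $p_t$ of $(G-e,p)$ with $p_0=p$. The squared edge-length $\ell(t):=\|p_t(u)-p_t(v)\|^2$ is analytic in $t$ and cannot be constant: otherwise $p_t$ would also be a nontrivial flex of $(G,p)$, contradicting the local rigidity of $G$ (which is implied by global rigidity). Following Hendrickson's original argument, I would then continue the flex to a configuration where $\ell$ attains an extremum, and apply a reflection there to produce a second analytic branch of the flex along which $\ell$ returns to the value $\ell_0:=\|p(u)-p(v)\|^2$ at some configuration $q$ not congruent to $p$. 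The framework $(G,q)$ would then be equivalent but not congruent to $(G,p)$, again contradicting global rigidity.

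The main obstacle lies in the final step of the redundant rigidity argument. The connectivity portion is essentially immediate from the reflection construction once the hyperplane $H$ is chosen. The redundant rigidity step, by contrast, requires passing from a nontrivial flex of $G-e$ to an equivalent non-congruent realization of the \emph{full} graph $G$, and this is not a purely formal manipulation: one must confirm that the real-analytic flex can be continued to an extremum of $\ell$ (which is nontrivial because the flex manifold need not be compact), and that the resulting reflected branch produces a genuinely non-congruent configuration rather than a rotation or a return to the original orbit. Both points rely on the real-analytic structure of the flex at a generic $p$ and constitute the technical core of Hendrickson's original argument.
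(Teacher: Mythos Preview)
The paper does not prove Theorem~\ref{theorem:hendrickson}; it is quoted as a known result from Hendrickson~\cite{hend} and used as a black box. So there is no proof in the paper to compare against, and your proposal is really a sketch of (your recollection of) Hendrickson's original argument.

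Your outline for $(d+1)$-connectivity is the standard reflection argument and is essentially complete; the only point you glossed over is the one-line check that reflecting $B$ across $H$ genuinely changes some pairwise distance between a vertex of $A$ and a vertex of $B$ (which follows because, by genericity, no vertex outside $S$ lies on $H$).

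Your outline for redundant rigidity, however, contains a step that does not match Hendrickson's argument and would not work as stated. There is no ``reflection at an extremum of $\ell$'' in the proof: nothing is being reflected, and an extremum of $\ell$ along the flex does not by itself produce a second branch returning to $\ell_0$. Hendrickson's actual mechanism is topological. One shows that the set of realizations of $G-e$ equivalent to $(G-e,p)$, taken modulo congruence, is a \emph{compact} smooth $1$-manifold near the generic $p$ (compactness is the delicate point, and uses that $G$ itself is rigid so edge lengths of $G-e$ control the framework up to a bounded ambiguity). A compact $1$-manifold is a disjoint union of circles; on the circle through $[p]$ the function $\ell$ is smooth and non-constant (as you argued), hence it must attain the value $\ell_0$ at a second point $[q]\neq[p]$. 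That $q$ gives the equivalent, non-congruent realization of $G$. Your proposal correctly identifies this step as the technical core, but the ``reflection'' description should be replaced by the compactness/circle argument.
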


In $d = 1,2$ dimensions the conditions of Theorem \ref{theorem:hendrickson} are, in fact, sufficient for global rigidity \cite{JJconnrig}. 
This fails in the $d \geq 3$ case and a combinatorial characterization of globally rigid graphs in these dimensions is a major open question.

\subsection{The rigidity matrix and the rigidity matroid}

The rigidity matroid of a graph $G$ is a matroid defined on the edge set
of $G$ which reflects the rigidity properties of all generic realizations of
$G$. For a general introduction to matroid theory we refer the reader to \cite{oxley}.
Let $(G,p)$ be a realization of a graph $G=(V,E)$ in $\RR^d$.
The {\it rigidity matrix} of the framework $(G,p)$
is the matrix $R(G,p)$ of size
$|E|\times d|V|$, where, for each edge $v_iv_j\in E$, in the row
corresponding to $v_iv_j$,
the entries in the $d$ columns corresponding to vertices $v_i$ and $v_j$ contain
the $d$ coordinates of
$(p(v_i)-p(v_j))$ and $(p(v_j)-p(v_i))$, respectively,
and the remaining entries
are zeros. In other words, it is $1/2$ times the Jacobian of the rigidity map $m_{d,G}$. 
The rigidity matrix of $(G,p)$ defines
the {\it rigidity matroid}  of $(G,p)$ on the ground set $E$
by linear independence of rows. It is known that any pair of generic frameworks
$(G,p)$ and $(G,q)$ have the same rigidity matroid.
We call this the $d$-dimensional {\it rigidity matroid}
${\cal R}_d(G)=(E,r_d)$ of the graph $G$.
We can define the rigidity matrix $R(G,p)$ for complex frameworks in the same way as in the real case. This, again, allows us to define the rigidity matroid of the framework. It is not difficult to show that the rigidity matroid of a generic framework in $\CC^d$ is, again, the $d$-dimensional rigidity matroid $\mathcal{R}_d(G)$.

We denote the rank of ${\cal R}_d(G)$ by $r_d(G)$.
A graph $G=(V,E)$ is {\it $\Rd$-independent} if $r_d(G)=|E|$ and it is an {\it $\Rd$-circuit}  if it is not $\Rd$-independent, but every proper 
subgraph $G'$ of $G$ is $\Rd$-independent. We note that in the literature such graphs are sometimes called $M$-independent in $\RR^d$ and $M$-circuits in $\RR^d$, respectively. An edge $e$ of $G$ is an {\it $\Rd$-bridge in $G$}
if  $r_d(G-e)=r_d(G)-1$ holds. Equivalently, $e$ is an $\Rd$-bridge in $G$ if it is not contained in any subgraph of $G$ that is an $\Rd$-circuit.

Gluck characterized rigid graphs in terms
of their rank.

\begin{theorem}\label{thm:gluck}
\cite{Gluck}
\label{combrigid}
Let $G=(V,E)$ be a graph with $|V|\geq d+1$. Then $G$ is rigid in $\RR^d$
if and only if $r_d(G)=d|V|-\binom{d+1}{2}$.
\end{theorem}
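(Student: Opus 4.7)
The plan is to prove Gluck's theorem by combining two standard ingredients: the equivalence of local and infinitesimal rigidity at generic configurations (the Asimow--Roth theorem), and a dimension count for the space of trivial infinitesimal motions.

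First I would set up the infinitesimal picture. An \emph{infinitesimal motion} of $(G,p)$ is a vector $\dot p \in \RR^{nd}$ satisfying $(p(u)-p(v))\cdot(\dot p(u)-\dot p(v)) = 0$ for every edge $uv\in E$; these are exactly the elements of $\ker R(G,p)$. The framework $(G,p)$ is called \emph{infinitesimally rigid} if every infinitesimal motion is \emph{trivial}, meaning it arises from differentiating a rigid motion of $\RR^d$ at the identity. Since the rigid motions form a Lie group of dimension $\binom{d+1}{2}$ (namely $d$ translations and $\binom{d}{2}$ infinitesimal rotations), and since this group acts with trivial stabilizer on any configuration whose image affinely spans $\RR^d$, when $|V|\geq d+1$ and $p$ is generic the space of trivial infinitesimal motions has dimension exactly $\binom{d+1}{2}$. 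This space is always contained in $\ker R(G,p)$ (a direct differentiation argument), so infinitesimal rigidity of a generic $(G,p)$ is equivalent to $\rank R(G,p) = d|V| - \binom{d+1}{2}$.

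Next I would invoke the Asimow--Roth theorem: at any configuration where the rank of $R(G,\cdot)$ attains its maximum (in particular at every generic $p$), local rigidity and infinitesimal rigidity coincide. This is the main analytic input, and is where the bulk of the work really lies; I would cite it rather than reprove it. The underlying reason is that at such a ``regular'' point, the fiber of the measurement map $m_{d,G}$ containing $p$ is a smooth manifold of the expected dimension, so a nontrivial continuous flex of $(G,p)$ exists if and only if a nontrivial infinitesimal motion does.

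Finally I would tie everything together using the definition of the generic rigidity matroid. By definition, $r_d(G) = \rank R(G,p)$ for every generic $p$, so the rank condition $r_d(G) = d|V| - \binom{d+1}{2}$ is equivalent to generic $(G,p)$ being infinitesimally rigid, which by Asimow--Roth is equivalent to $G$ being rigid in $\RR^d$. The main obstacle is the Asimow--Roth equivalence itself; the rest of the proof is essentially a dimension count combined with the definition of the rigidity matroid.
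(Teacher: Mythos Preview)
The paper does not prove this theorem; it merely quotes it as a classical result of Gluck with a citation and no argument. Your proposal is a correct outline of the standard proof via the Asimow--Roth equivalence of local and infinitesimal rigidity at regular points together with the $\binom{d+1}{2}$-dimensional count of trivial motions for affinely spanning configurations, so there is nothing in the paper to compare it against.
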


Let ${\cal M}$ be a matroid on ground set $E$ with rank function $r$. 
We can define a relation on the pairs of elements of $E$ by
saying that $e,f\in E$ are
equivalent if $e=f$ or there is a circuit $C$ of ${\cal M}$
with $\{e,f\}\subseteq C$.
This defines an equivalence relation. The equivalence classes are 
the {\it connected components} of ${\cal M}$.
The matroid is said to be {\it connected} if there is only one equivalence class, and {\it separable} otherwise.
We shall use the fact that ${\cal M}$ is separable
if and only if there is a partition $E=E_1\cup E_2$ of $E$ into two non-empty subsets
for which 
\begin{equation*}
r({\cal M})=r({\cal M}_1)+r({\cal M}_2),
\end{equation*}
holds, where ${\cal M}_i$ denotes
the restriction of ${\cal M}$ to $E_i$, $i=1,2$.

Given a graph $G=(V,E)$, the subgraphs induced by the edge sets of the connected components
of ${\cal R}_d(G)$ are the
{\it $\Rd$-connected components} of $G$.
The graph is said to be {\it $\Rd$-connected} if ${\cal R}_d(G)$ is connected,
and {\it $\Rd$-separable} otherwise. See Figure \ref{figure:mseparable1} for an example of an $\mathcal{R}_3$-separable graph.

In one and two dimensions, $\Rd$-connectivity has close ties with (global) rigidity, as shown by the following result.

\begin{theorem}\label{theorem:JacksonJordan}\cite{JJconnrig}
Let $G$ be a graph without isolated vertices and $d \in \{1,2\}$. Then
\begin{enumerate}[label=\emph{(\alph*)}]
    \item If $G$ is globally rigid in $\RR^d$ on at least $d+2$ vertices, then it is $\Rd$-connected.
    \item If $G$ is $\Rd$-connected, then it is redundantly rigid in $\RR^d$.
\end{enumerate}
\end{theorem}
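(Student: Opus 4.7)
My plan is to handle (b) first, since it is the easier direction. Assume $G$ is $M$-connected in $\RR^d$ with at least two edges (the one-edge case is trivial once combined with the no-isolated-vertex hypothesis). By $M$-connectedness, every edge $e$ of $G$ lies in some circuit of $\mathcal{R}_d(G)$, so $e$ is not an $M$-bridge, giving $r_d(G - e) = r_d(G)$. Hence redundant rigidity reduces to establishing that $G$ itself is rigid, that is, by Theorem \ref{thm:gluck}, $r_d(G) = d|V| - \binom{d+1}{2}$. For $d=1$ the rigidity matroid coincides with the graphic matroid, and $M$-connectedness of the graphic matroid of a graph without isolated vertices amounts to being $2$-edge-connected, which implies connectedness and hence $1$-dimensional rigidity. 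For $d = 2$, I would build $G$ up inductively as an increasing union of $M$-circuits such that each new circuit shares at least one edge with the graph accumulated so far; using the fact that every $M$-circuit $C$ in $\RR^2$ is rigid on $V(C)$ with $|E(C)| = 2|V(C)| - 2$, a rank calculation at each step keeps the partial graph rigid on its vertex set, and we obtain rigidity of $G$ at the end.

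For part (a), the case $d = 1$ reduces to the classical fact that a graph on $n \geq 3$ vertices is globally rigid in $\RR^1$ if and only if it is $2$-vertex-connected; any such graph is $2$-edge-connected, and the graphic (= rigidity) matroid of a $2$-edge-connected graph is connected. The $d = 2$ case is considerably more involved. I would first invoke Hendrickson's theorem (Theorem \ref{theorem:hendrickson}) to conclude that $G$ is $3$-connected and redundantly rigid in $\RR^2$, and then prove the purely combinatorial implication that every $3$-connected, redundantly rigid graph in $\RR^2$ is $M$-connected. The natural approach is induction on $|V(G)| + |E(G)|$ using local reduction operations that preserve both $3$-connectivity and redundant rigidity, most notably the inverse $1$-extension (splitting off a degree-$3$ vertex along two non-adjacent edges and re-adding the third). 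If no admissible reduction exists at the current graph, one analyses the resulting obstructions (small vertex cuts pinned by rigid subgraphs) until reaching a small base case such as $K_4$, which is visibly a single $M$-circuit and hence $M$-connected.

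The main obstacle is the $d=2$ case of (a). Proving the existence of an admissible splitting that simultaneously preserves $3$-connectivity and redundant rigidity in the $2$-dimensional rigidity matroid requires a delicate case analysis of small vertex cuts and of how rigid subgraphs can overlap along them, and it is the technical heart of the Jackson--Jordán characterization of $2$-dimensional global rigidity. By contrast, the ``no $M$-bridge'' observation in (b) is essentially immediate, and the only genuine content there is the rigidity bootstrap for $d=2$, which the circuit-by-circuit construction above handles cleanly.
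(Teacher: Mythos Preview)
The paper does not give its own proof of this theorem: it is quoted from \cite{JJconnrig}. That said, the paper \emph{does} prove part~(a) twice by other means --- once as the $d\le 2$ special case of Theorem~\ref{theorem:mconnected} (a stress-matrix argument valid in all dimensions), and once via the counting argument of Theorem~\ref{count}, which the authors remark is ``similar'' to the original Jackson--Jord\'an proof. Neither of these uses inverse $1$-extensions; both are quite different from your inductive reduction strategy. Your outline is a plausible route, but considerably heavier than what is actually needed: the direct rank count in Theorem~\ref{count} (specialized to $d\le2$, where each $M$-connected component is rigid and so has $\dof_d=0$) already forces $q=1$ for any $(d+1)$-connected redundantly rigid graph, giving $M$-connectedness in one stroke. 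By contrast, making the inverse-$1$-extension induction watertight would require the full Jackson--Jord\'an machinery for finding admissible degree-$3$ vertices, which is exactly the hard part you flag.

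One small slip in part~(b): for $d=1$, connectedness of the graphic matroid is equivalent to $2$-\emph{vertex}-connectivity, not $2$-edge-connectivity (two triangles sharing a single vertex is $2$-edge-connected but its cycle matroid is disconnected). This does not damage your conclusion, since either way the graph is connected and hence rigid in $\RR^1$, but the characterization you stated is incorrect. Your ear-decomposition argument for the $d=2$ case of~(b) is the standard one and is fine; the paper does not reprove~(b) at all.
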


It is known that part \emph{(b)} of Theorem \ref{theorem:JacksonJordan} is not true in $d \geq 3$ dimensions. On the other hand, we shall show that part \emph{(a)} remains valid in $d$ dimensions for all $d \geq 3$ (Theorem \ref{theorem:mconnected}). We note that this is essentially a strengthening of the second part of Hendrickson's theorem (Theorem \ref{theorem:hendrickson}), which can be seen as follows. A graph is redundantly rigid in $\RR^d$ if and only if it is rigid in $\RR^d$ and contains no $\Rd$-bridges, and $\Rd$-connected graphs contain no $\Rd$-bridges (indeed, $e$ is an $\Rd$-bridge in $G$ if and only if $\{e\}$ is an $\Rd$-connected component of $G$). Since a globally rigid graph is clearly rigid, Theorem \ref{theorem:mconnected} implies that such a graph (on at least $d+2$ vertices) is redundantly rigid as well.


Let $G = (V,E)$ be a graph on $n$ vertices and $(G,p)$ a framework in $\CC^d$. The elements of $\ker(R(G,p)) \subseteq \CC^{nd}$ are the \emph{infinitesimal motions} of $(G,p)$, while the elements of $\ker(R(G,p)^T) \subseteq \CC^E$ are the \emph{equilibrium stresses} (or \emph{stresses}, for short) of $(G,p)$. We shall also use the notation $S(G,p) = \ker(R(G,p)^T)$. Let us denote the column space of $R(G,p)$ by $\Span(R(G,p))$. By basic linear algebra, $S(G,p)$ is the orthogonal complement\footnote{Note that here, as well as later on, we shall say that two vectors $x,y \in \CC^E$ are orthogonal if $x^Ty = 0$; that is, we work with the symmetric bilinear form $\langle x,y \rangle = x^Ty$, and \emph{not} the hermitian bilinear form  $\langle x,y \rangle = x^*y$.}
of $\Span(R(G,p))$ in $\CC^E$. Since the elements of $S(G,p)$ capture the row dependences of $R(G,p)$, we have that $G$ is $\Rd$-independent if and only if for every generic realization $(G,p)$ in $\CC^d$, we have $S(G,p) = \{0\}$, and $G$ is an $\Rd$-circuit if and only if every generic realization $(G,p)$ in $\CC^d$ has a unique (up to scalar multiple) non-zero equilibrium stress $\omega$ and $\omega$ is non-zero on every edge of $G$.

Suppose that $G$ is $\Rd$-separable and let $E = E_1 \cup E_2$ be a partition of $E$ into non-empty subsets such that for the graphs $G_i$ induced by $E_i, i=1,2$, we have $r_d(G) = r_d(G_1) + r_d(G_2)$. It is not difficult to see that in this case for any generic realization $(G,p)$ in $\CC^d$, we have $\Span(R(G,p)) = \Span(R(G_1,p)) \oplus \Span(R(G_2,p))$ as linear subspaces of $\CC^E$, under the identification $\CC^E = \CC^{E_1} \times \CC^{E_2}$. This also implies $S(G,p) = S(G_1,p) \oplus S(G_2,p)$ under the same identification.

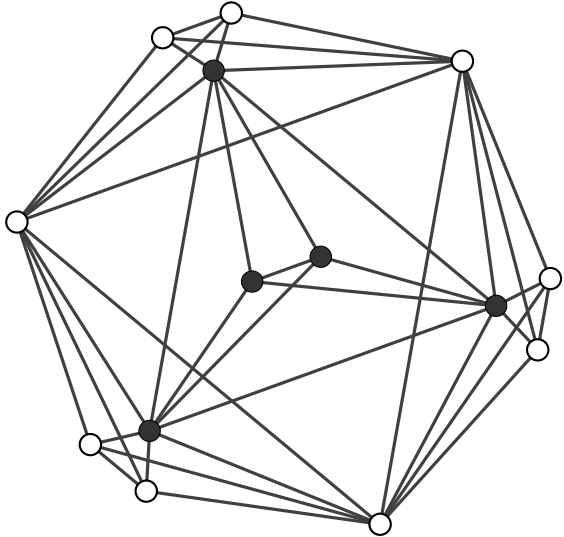
\begin{figure}[ht!]
    \centering
        \begin{tikzpicture}[x = 1cm, y = 1cm, scale = 4]
            \coordinate (B) at (0,0.7);
            \coordinate (B') at (0,0.9);
            \coordinate (C) at (0.12,.86);
            \coordinate (D) at (-0.12,.86);
            \foreach \k in {1,2,3}
            {
                \pgfmathtruncatemacro\deg{120*\k + 20}
                \pgfmathtruncatemacro\bigdeg{120*\k + 80}
                \coordinate (B\k) at ([rotate around={\deg:(0,0)}]B) {};
                \coordinate (C\k) at ([rotate around={\deg:(0,0)}]C) {};
                \coordinate (D\k) at ([rotate around={\deg:(0,0)}]D) {};
                \coordinate (E\k) at ([rotate around={\bigdeg:(0,0)}]B') {};
                \draw [line width=\normaledge, color=edgeblack] (B\k) -- (C\k);
                \draw [line width=\normaledge, color=edgeblack] (C\k) -- (D\k);
                \draw [line width=\normaledge, color=edgeblack] (D\k) -- (B\k);
                \draw [line width=\normaledge, color=edgeblack] (E\k) -- (B\k);
                \draw [line width=\normaledge, color=edgeblack] (E\k) -- (C\k);
                \draw [line width=\normaledge, color=edgeblack] (E\k) -- (D\k);
            }
            
            \coordinate (O1') at (-0.12,0);
            \coordinate (O2') at (0.12,0);
            \coordinate (O1) at ([rotate around={20:(0,0)}]O1') {};
            \coordinate (O2) at ([rotate around={20:(0,0)}]O2') {};
            \draw [line width=\normaledge, color=edgeblack] (O1) -- (O2);            
        
            \foreach \k in {1,2,3}
            {
                \pgfmathtruncatemacro\kplus{Mod(\k,3) + 1}
                \draw [line width=\normaledge, color=edgeblack] (E\k) -- (B\kplus);
                \draw [line width=\normaledge, color=edgeblack] (E\k) -- (C\kplus);
                \draw [line width=\normaledge, color=edgeblack] (E\k) -- (D\kplus);
                \draw [line width=\normaledge, color=edgeblack] (E\k) -- (E\kplus);
                \draw [line width=\normaledge, color=edgeblack] (B\k) -- (B\kplus);
                
                \draw [line width=\normaledge, color=edgeblack] (B\k) -- (O1);
                \draw [line width=\normaledge, color=edgeblack] (B\k) -- (O2);
            }

            \draw [fill=vertexblack] (O1) circle (1pt);
            \draw [fill=vertexblack] (O2) circle (1pt);
            
            \foreach \k in {1,2,3}
            {   
                \draw [fill=vertexblack] (B\k) circle (1pt);
                \draw [fill=white, thick] (C\k) circle (1pt);
                \draw [fill=white, thick] (D\k) circle (1pt);
                \draw [fill=white, thick] (E\k) circle (1pt);
            }
        \end{tikzpicture}
    \caption{A $3$-connected, redundantly rigid and $M$-separable graph in $\RR^3$.
    This graph satisfies $r_3(G)= 36 = 27 + 9 =r_3(G^o)+r_3(K_5)$, where
$G^o$ is the outer ring of $K_5$'s and $K_5$ is the subgraph induced by the black (filled) vertices.}
    \label{figure:mseparable1}
\end{figure}

We close this section by recalling the well-known coning operation on a graph. Given a graph $G$, the \emph{cone} of $G$ is obtained by adding a new vertex $v$ to $G$, along with new edges from $v$ to every vertex of $G$. Coning provides a transfer between various rigidity properties in $d$ and $d+1$ dimensions, as shown by the following results.

\begin{theorem}\label{thm:coninggeneric}
Let $d \geq 1$ and let $G = (V,E)$ be a graph. 
\begin{enumerate}[label=(\alph*)]
\item \cite{coningwhiteley} $G$ is rigid in $\RR^d$ ($\Rd$-independent, respectively) if and only if the cone graph $G^v$ is rigid in $\RR^{d+1}$ ($\mathcal{R}_{d+1}$-independent, respectively).
\item \cite{coning} $G$ is globally rigid in $\RR^d$ if and only if the cone graph $G^v$ is globally rigid in $\RR^{d+1}$.
\end{enumerate}
\end{theorem}

\subsection{Affine maps and conics at infinity}

We say that a framework $(G,p)$ in $\CC^d$ has \emph{full affine span} if the affine span of the image of the vertices under $p$ is all of $\CC^d$. A configuration $q \in \CC^{nd}$, viewed as a point $q = (q_v)_{v \in V}$, is an \emph{affine image} of $p$ if $q_v = Ap_v + b, v \in V$ for some matrix $A \in \CC^{d \times d}$ and vector $b \in \CC^d$. We say that $p$ and $q$ are \emph{strongly congruent} if $q$ can be obtained as the affine image of $p$ under a rigid motion, i.e.\ an affine map $x \mapsto Ax + b$, such that $A^TA$ is the identity matrix.

Two frameworks $(G,p)$ and $(G,q)$ in $\RR^d$ are strongly congruent if and only if they are congruent. This is not always the case for frameworks in $\CC^d$. However, congruent frameworks that have full affine span are strongly congruent, see \cite[Corollary 8]{GT}.

Let $(G,p)$ be a framework in $\CC^d$. We say that the \emph{edge directions of $(G,p)$ lie on a conic at infinity} if there is a non-zero symmetric matrix $Q$ such that for every edge $uv$ of $G$, $(p(u) - p(v))^T Q (p(u) - p(v)) = 0$. The following lemma is implied by results of Connelly (see e.g.\ \cite[Proposition 4.2]{connelly}). For completeness, we give a proof.

\begin{lemma}\label{lemma:connelly}
Let $G = (V,E)$ be a graph and  $(G,p)$ a framework in $\CC^d$ 
such that its edge directions do not lie on a conic at infinity. Let $(G,q)$ be a framework such that $q$ is an affine image of $p$. 
Then $m_{d,G}(q) = m_{d,G}(p)$ if and only if $q$ and $p$ are congruent.
\end{lemma}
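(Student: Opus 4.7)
The plan is straightforward: reduce the equality of squared edge lengths, under an affine change of coordinates, to a quadratic form that must vanish on every edge direction, and then invoke the conic-at-infinity assumption to conclude that the form is identically zero.

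First I would handle the trivial direction. If $q$ and $p$ are congruent, then $m_{d,K_V}(q) = m_{d,K_V}(p)$ by definition, so in particular $m_{d,G}(q) = m_{d,G}(p)$.

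For the nontrivial direction, write the affine map explicitly as $q_v = A p_v + b$ for some $A \in \CC^{d\times d}$ and $b \in \CC^d$. For each edge $uv \in E$, I would compute
\begin{equation*}
m_{uv}(q) = (q_u - q_v)^T (q_u - q_v) = (p_u - p_v)^T A^T A (p_u - p_v),
\end{equation*}
so the hypothesis $m_{d,G}(q) = m_{d,G}(p)$ translates into
\begin{equation*}
(p_u - p_v)^T \bigl(A^T A - I\bigr) (p_u - p_v) = 0 \quad \text{for every } uv \in E.
\end{equation*}
Since $Q := A^T A - I$ is a symmetric $d \times d$ matrix, the assumption that the edge directions of $(G,p)$ do not lie on a conic at infinity forces $Q = 0$, i.e.\ $A^T A = I$.

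At this point the affine map $x \mapsto Ax + b$ is a rigid motion in the complex sense, so $q$ is strongly congruent to $p$, and in particular congruent to $p$. I do not expect any real obstacle here; the only subtlety worth flagging is that over $\CC$ one must distinguish strong congruence from congruence in general, but the strong congruence we obtain automatically implies congruence, so no appeal to \cite[Corollary 8]{GT} is needed. The whole argument is essentially the observation that $A^T A - I$ is the symmetric matrix associated with a conic at infinity on which all edge vectors would have to lie if it were nonzero.
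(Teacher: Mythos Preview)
Your argument is correct and essentially identical to the paper's: both compute $m_{uv}(q)-m_{uv}(p)=(p_u-p_v)^T(A^TA-I)(p_u-p_v)$ and use the conic-at-infinity hypothesis to force $A^TA=I$. The only cosmetic difference is that the paper finishes by noting that this identity makes all pairwise squared distances agree, whereas you phrase it as ``$A^TA=I$ means the affine map is a rigid motion, hence strong congruence, hence congruence''---these are the same observation.
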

\begin{proof}
The ``if'' direction is immediate. In the other direction, suppose that $m_{d,G}(q) = m_{d,G}(p)$. Let 
$x \mapsto Ax + b$ be an affine transformation that sends $p$ to $q$. It follows from the definitions that for any pair of vertices $u,v \in V$,
\begin{equation*}
    m_{uv}(q) = (p(u) - p(v))^T A^T A (p(u) - p(v)).
\end{equation*}
Therefore we have 
\begin{equation}\label{eq:conic}
     m_{uv}(q) -  m_{uv}(p) = (p(u) - p(v))^T (A^T A - I) (p(u) - p(v)).
\end{equation}
By assumption, for every edge $uv \in E$, the left-hand side of (\ref{eq:conic}) is zero. Since the edge directions of $(G,p)$ do not lie on a conic at infinity, this implies $A^TA - I= 0$, so that the left-hand side is zero for every pair of vertices $u,v \in V$, which is what we wanted to show. 
\end{proof}

The following lemma is stated in \cite{connelly} for frameworks in $\RR^d$, but the same proof works for frameworks in $\CC^d$.
\begin{lemma}\cite[Proposition 4.3]{connelly}\label{lemma:mindegreeconic}
Let $G$ be a graph in which each vertex has degree at least $d$. Then for every generic realization $(G,p)$ in $\CC^d$, the edge directions of $(G,p)$ do not lie on a conic at infinity.
\end{lemma}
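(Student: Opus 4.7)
The condition that the edge directions of $(G,p)$ lie on a conic at infinity is equivalent to the kernel of the linear map $A_G(p) : Q \mapsto \big((p(u)-p(v))^T Q (p(u)-p(v))\big)_{uv \in E}$ on the space of symmetric $d \times d$ matrices being nontrivial. The matrix of $A_G(p)$ has entries that are polynomials in the coordinates of $p$, so the locus where $A_G(p)$ is injective is Zariski open in $\CC^{nd}$. Therefore it suffices to exhibit a single (possibly non-generic) configuration $p^* \in \CC^{nd}$ for which $A_G(p^*)$ is injective, and then the same holds for every generic $p$ by rank semi-continuity.

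To construct $p^*$, pick any vertex $v_0 \in V$ and (using the min-degree assumption) $d$ of its neighbors $u_1, \ldots, u_d$. Set $p^*(v_0) = 0$ and $p^*(u_i) = e_i$ for $i = 1, \ldots, d$, and place the remaining $n-d-1$ vertices at points with algebraically independent coordinates. The $d$ edges $v_0 u_i$ give $Q_{ii} = 0$, so any $Q \in \ker A_G(p^*)$ has vanishing diagonal. After this reduction, every other edge $uw$ yields a linear equation purely in the $\binom{d}{2}$ off-diagonal entries $Q_{ij}$ ($i<j$):
\[
\sum_{i<j} 2 Q_{ij} (p^*(u)-p^*(w))_i (p^*(u)-p^*(w))_j = 0.
\]
The count works out, since $G-v_0$ has minimum degree at least $d-1$, giving $|E(G-v_0)| \geq \binom{d}{2}$ when $n \geq d+1$. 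If the neighborhood $u_1,\ldots,u_d$ happens to be a clique in $G$, then each edge $u_i u_j$ yields $-2 Q_{ij}=0$, immediately forcing $Q=0$.

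In the general case, some of the $\binom{d}{2}$ required equations must come from edges involving ``outside'' vertices whose coordinates are algebraically independent, and one has to verify that these equations together with any available edges inside the neighborhood span the full $\binom{d}{2}$-dimensional dual space. This is the main obstacle: the min-degree hypothesis gives exactly the right edge count, but a careful argument---combining the graph structure of $G-v_0$ with the algebraic freedom of the outside coordinates, again via rank semi-continuity on the smaller off-diagonal system---is needed to rule out linear dependences among the resulting equations.
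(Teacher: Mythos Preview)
The paper does not give its own proof of this lemma; it cites Connelly's Proposition 4.3 and notes that the real argument works verbatim over $\CC$.

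Your outline---reduce to a single configuration by semicontinuity, then analyze it explicitly---is the natural one, but you leave the essential step undone. After pinning $v_0$ at the origin and $d$ neighbors at $e_1,\dots,e_d$, you correctly kill the diagonal of $Q$ and reduce to showing that the remaining edges impose $\binom{d}{2}$ independent linear conditions on the off-diagonal entries. You then write that this ``is the main obstacle'' and that ``a careful argument\dots is needed,'' without supplying one. That missing argument is the actual content of the lemma; everything before it is routine.

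Two remarks on the gap. First, your $p^*$ does not genuinely simplify the problem: it is, up to an affine change of coordinates, precisely a generic configuration. The unique affine map sending a generic tuple $(p(v_0),p(u_1),\dots,p(u_d))$ to $(0,e_1,\dots,e_d)$ carries a generic $p$ to a configuration of exactly your form, with the outside vertices still algebraically independent. Since the conic-at-infinity condition is affine-invariant, $A_G(p^*)$ is injective if and only if $A_G(p)$ is for generic $p$; the semicontinuity reduction is vacuous here. Second, the min-degree hypothesis is essential and not merely a counting device: two disjoint triangles in $\CC^3$ have six edges (the right count) and min degree $2<3$, yet for \emph{every} $p$ their edge directions lie on a conic, because the two triangles' directions lie in two $2$-planes $U,U'\subseteq\CC^3$ and $\mathrm{Sym}^2(U)+\mathrm{Sym}^2(U')$ has dimension at most $5<6=\binom{4}{2}$. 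So a complete proof must exploit the degree bound structurally---for instance by iterating the star-of-a-vertex reduction at further vertices of $G-v_0$, which again has min degree at least $d-1$---rather than relying on a raw edge count together with generic freedom of the outside coordinates.
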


The following lemma is folklore.

\begin{lemma}\label{lemma:affineimagestress}
Let $(G,p), (G,q)$ be frameworks in $\CC^d$ and suppose that $q$ is an affine image of $p$. Then $S(G,p) \subseteq S(G,q)$. If both $(G,p)$ and $(G,q)$ have full affine span, then $S(G,p) = S(G,q)$.
\end{lemma}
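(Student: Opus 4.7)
The plan is to work directly with the characterization of equilibrium stresses as vectors $\omega \in \CC^E$ satisfying the per-vertex equations $\sum_{u : uv \in E} \omega_{uv}(p(u) - p(v)) = 0$ for every $v \in V$, which is just the condition $R(G,p)^T \omega = 0$.

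For the first inclusion, write the affine map sending $p$ to $q$ as $x \mapsto Ax + b$, so that $q(u) - q(v) = A(p(u) - p(v))$ for every pair of vertices $u,v$. Taking any $\omega \in S(G,p)$ and applying $A$ to the stress equation at vertex $v$ gives
\begin{equation*}
\sum_{u : uv \in E} \omega_{uv}(q(u) - q(v)) \;=\; A \sum_{u : uv \in E} \omega_{uv}(p(u) - p(v)) \;=\; A \cdot 0 \;=\; 0,
\end{equation*}
so $\omega \in S(G,q)$, establishing $S(G,p) \subseteq S(G,q)$.

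For the reverse inclusion under the full affine span hypothesis, the key observation is that the affine map $x \mapsto Ax + b$ must then be invertible. Indeed, the affine span of the image of $q$ equals $A \cdot \operatorname{aff}(\mathrm{im}\, p) + b$; if $(G,p)$ has full affine span this is $A \cdot \CC^d + b$, and for this to equal $\CC^d$ (i.e.\ for $(G,q)$ to also have full affine span) the column space of $A$ must be all of $\CC^d$, so $A$ is invertible. Thus $p(v) = A^{-1} q(v) - A^{-1} b$ exhibits $p$ as an affine image of $q$, and applying the first part in the other direction yields $S(G,q) \subseteq S(G,p)$.

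I do not anticipate any real obstacle here: once one unwinds the definition of an equilibrium stress, the first inclusion is a one-line computation using $A$-linearity of differences, and the equality case reduces to observing that full affine span forces the affine map to be a bijection so that the hypothesis is symmetric in $p$ and $q$.
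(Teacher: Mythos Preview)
Your proof is correct and follows essentially the same approach as the paper's: the paper packages the first inclusion via the matrix identity $R(G,q) = R(G,p)(I_n \otimes A)$, whereas you unpack this into the equivalent per-vertex stress equations and apply $A$ directly, but the content is identical. For the equality case, both you and the paper argue that full affine span forces $A$ to be invertible and then invoke symmetry.
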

\begin{proof}
Let $x \mapsto Ax + b$ be the affine transformation that maps $p$ to $q$ and let $A'$ be the $nd \times nd$ block matrix with $n$ copies of $A$ in its diagonal and zeroes elsewhere, where $n$ denotes the number of vertices of $G$. In other words, $A'$ is the Kronecker product $I_n \otimes A$ of the $n \times n$ identity matrix and $A$. 

Direct calculation shows that $R(G,q) = R(G,p)A'$, which immediately implies $S(G,p) = \ker(R(G,p)^T) \subseteq \ker(R(G,q)^T) = S(G,q)$. If $(G,p)$ and $(G,q)$ have full affine span, then the affine map sending $p$ to $q$ must necessarily be invertible, so that $p$ is an affine image of $q$ as well, implying $S(G,q) \subseteq S(G,p)$.
\end{proof}

Finally, we shall use the following property of globally rigid graphs which is easy to deduce from
previous results on global rigidity and maximum rank stress matrices. We sketch the proof and
refer the reader to \cite{connelly,GHT} for the definitions and key theorems.

\begin{theorem}\label{theorem:globallyrigidcharacterization}
Let $G$ be a globally rigid graph on $n\geq d+2$ vertices in $\RR^d$, for some $d\geq 1$ and $(G,p)$ a 
 generic realization 
of $G$ in  $\CC^d$.
For  every  realization $(G,q)$ in $\CC^d$
with $S(G,p) = S(G,q)$ 
we must have  that $q$ is an affine image of $p$. 
\end{theorem}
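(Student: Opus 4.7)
The plan is to prove this via a stress-matrix argument in the style of Connelly and Gortler--Healy--Thurston. The key tool is the stress matrix associated with an equilibrium stress: for any $\omega \in S(G,p)$ we form the symmetric $n \times n$ matrix $\Omega$ with $\Omega_{uv} = -\omega_{uv}$ if $uv \in E$, $\Omega_{uv} = 0$ if $u \neq v$ and $uv \notin E$, and diagonal entries chosen so that each row of $\Omega$ sums to zero. A direct computation (reading the equilibrium condition $R(G,p)^T \omega = 0$ vertex by vertex) shows that this is equivalent to $\Omega P = 0$, where $P \in \CC^{n \times d}$ has the vectors $p(v)^T$ as its rows. In particular, $\mathbf{1} \in \ker(\Omega)$ and all $d$ columns of $P$ lie in $\ker(\Omega)$, so $\dim \ker(\Omega) \geq d+1$, equivalently $\rank(\Omega) \leq n - d - 1$.

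The crucial structural input I would invoke is the maximum-rank stress theorem for generically globally rigid graphs: since $G$ is globally rigid in $\CC^d$ with $n \geq d+2$ vertices and $(G,p)$ is generic, there exists an equilibrium stress $\omega \in S(G,p)$ whose stress matrix $\Omega$ attains the maximal rank $n-d-1$. This is the Connelly sufficient condition combined with its converse due to Gortler--Healy--Thurston, and it is exactly the place where global rigidity of $G$ enters the argument.

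With this $\omega$ in hand, the rest is short. Since $S(G,p) = S(G,q)$ by hypothesis, we also have $\omega \in S(G,q)$, hence $\Omega Q = 0$, where $Q \in \CC^{n\times d}$ is the matrix of $q$. Now $\ker(\Omega)$ has dimension exactly $d+1$ and contains the $d+1$ vectors $\mathbf{1}, P_1, \dots, P_d$ (the all-ones vector and the columns of $P$). Genericity of $p$ ensures $(G,p)$ has full affine span, so these $d+1$ vectors are linearly independent and therefore form a basis of $\ker(\Omega)$. Each column of $Q$ must then be a linear combination of them, i.e.\ $Q = P A^T + \mathbf{1} b^T$ for some $A \in \CC^{d \times d}$ and $b \in \CC^d$. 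Reading this row by row gives $q(v) = A p(v) + b$ for every $v \in V$, so $q$ is an affine image of $p$.

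The main obstacle, and the only non-routine ingredient, is the existence of a stress whose matrix attains the maximum rank $n-d-1$; this is precisely the deep part that I would import from the literature on globally rigid frameworks. Everything else is a linear-algebra consequence of the stress-matrix formalism together with the fact that genericity forces $\mathbf{1}, P_1, \dots, P_d$ to be independent, so that the equality $S(G,p) = S(G,q)$ transfers the kernel constraints from $P$ to $Q$.
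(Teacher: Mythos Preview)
Your proof is correct and takes essentially the same approach as the paper: both import a maximum-rank ($n-d-1$) stress matrix from Gortler--Healy--Thurston, transfer it to $(G,q)$ via the hypothesis $S(G,p)=S(G,q)$, and then read off that $q$ is an affine image of $p$ from the kernel structure of $\Omega$ (the paper cites Connelly's Proposition~1.2 for this last step, while you write out the same linear algebra directly). The only point the paper handles more explicitly is the real-to-complex transfer of the max-rank-stress result: since the GHT theorem is stated for real generic frameworks, the paper first obtains the stress for a real generic $(G,p_0)$ and then invokes an auxiliary lemma to pass to the complex generic $(G,p)$.
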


\begin{proof} Let $(G,p_0)$ be a generic realization of $G$ in $\RR^d$. 
It was shown in \cite{GHT} that there exists an equilibrium stress $\omega_0$ for $(G,p_0)$ for which the associated stress matrix has rank $n-d-1$. 
The complex version of \cite[Lemma 5.8]{GHT} then implies that $(G,p)$ has an equilibrium stress $\omega$ such that the associated stress matrix has rank $n - d - 1$.

If $S(G,p) = S(G,q)$ for some realization $(G,q)$, then $\omega$ is a stress for $(G,q)$ as well. Then (the complex version of) \cite[Proposition 1.2]{connelly}
implies that $q$ is an affine image of $p$.
\end{proof}

\subsection{Algebraic geometry background}

We briefly recall the notions from algebraic geometry that we shall use. For a more detailed exposition, see \cite[Appendix A]{GTT} or \cite[Section 2.2]{GJ}.
We say that a subset $X \subseteq \CC^m$ is a \emph{variety} if it is the set of simultaneous vanishing points of some polynomials $f_1,\ldots,f_k \in \CC[x_1,\ldots,x_m]$. The varieties in $\CC^m$ form the closed sets of the so-called \emph{Zariski topology}. For an arbitrary set $X \subseteq \CC^m$, we shall use $\overline{X}$ to denote its closure in the Zariski topology on $\CC^m$.

Let $X \subseteq \CC^m$ be a variety. We denote by $I(X) \subseteq \CC[x_1,\ldots,x_m]$ the set of polynomials that vanish on $X$. We say that $X$ is \emph{irreducible} if it cannot be written as the proper union of a finite number of varieties, and it is \emph{defined over $\QQ$} if $I(X)$ has a generating set consisting of polynomials with rational coefficients. The \emph{dimension} of an irreducible variety $X$ is the largest number $k$ such that there exists a chain $X_0 \subsetneq X_1 \subsetneq \cdots \subsetneq X_k = X$ of irreducible varieties. From this definition the following useful fact is immediate: if $X,Y \subseteq \CC^m$ are irreducible varieties of the same dimension with $X \subseteq Y$, then $X = Y$. We shall also use the following result.
\begin{theorem}\label{theorem:productirreducible}\cite[Chapter 3, Theorem 1.6 and Chapter 6, Example 1.33]{shafarevich}
Let $X \subseteq \CC^{m_1},Y \subseteq \CC^{m_2}$ irreducible varieties. Then the cartesian product $X \times Y \subseteq \CC^{m_1 + m_2}$ is an irreducible variety of dimension $\dim(X) + \dim(Y)$.
\end{theorem}

Let $X \subseteq \CC^m$ be an irreducible variety. At each point $x \in X$, we define the \emph{Zariski tangent space} of $X$ at $x$, denoted by $T_x X$, to be the kernel of the Jacobian matrix of a set of generating polynomials of $I(X)$, evaluated at $x$. Thus, $T_x X$ is a linear subspace of $\CC^m$. We say that $x$ is \emph{smooth} if $\dim(T_x X) = \dim(X)$. If $X$ is \emph{homogeneous} (i.e.\ it can be defined by homogenous polynomials, or equivalently, $tx \in X$ for every $x \in X$ and $t \in \CC$) and $x \in X$ is a smooth point, we define the \emph{Gauss fiber corresponding to $x$} to be the set $\{y \in X: y \text{ is smooth and } T_y X = T_x X\}$.\footnote{In other words, the Gauss fiber corresponding to $x$ is the fiber over $T_x X$ of the rational map $X \dashrightarrow Gr(\dim(X),\CC^m)$, defined by the mapping $x \mapsto T_x X$ on the smooth locus of $X$. Here, $Gr(\dim(X),\CC^E)$ denotes the Grassmannian variety of $\dim(X)$-dimensional linear subspaces of $\CC^n$.}

Let $X \subseteq \CC^m$ be a variety defined over $\QQ$. We say that a point $x \in X$ is \emph{generic in $X$} if the only polynomials with rational coefficients satisfied by $x$ are those in $I(X)$. Note that a framework $(G,p)$ in $\CC^d$ is generic if and only if $p$ is generic as a point of the variety $\CC^{nd}$. If $X$ is an irreducible variety defined over $\QQ$, then every generic point of $X$ is smooth. We shall also need the following result.

\begin{lemma}\cite[Lemma A.6]{GTT}\label{lemma:genericdense}
Let $X \subseteq Y$ be irreducible varieties, with $Y$ defined over $\QQ$. Suppose that $X$ has at least one point
which is generic in $Y$. Then the points in $X$ which are generic in $Y$ are
Zariski-dense in $X$.
\end{lemma}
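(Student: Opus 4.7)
The plan is to argue by contradiction, combining a rigidity principle for $\QQ$-Zariski closures with the fact that an irreducible variety of positive dimension over an uncountable algebraically closed field cannot be written as a countable union of proper closed subvarieties. The case $\dim X = 0$ is trivial, since then $X = \{x_0\}$ is the single $\QQ$-generic point given by hypothesis, so throughout I assume $\dim X \geq 1$.

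First I would show that the smallest $\QQ$-defined subvariety $X_\QQ$ of $\CC^m$ containing $X$ must equal $Y$. Indeed, $X_\QQ \subseteq Y$ because $Y$ is itself a $\QQ$-subvariety containing $X$, while $X_\QQ$ contains the $\QQ$-generic point $x_0$, and so its $\QQ$-vanishing ideal is contained in $I(Y) \cap \QQ[x_1,\ldots,x_m]$, forcing $X_\QQ \supseteq Y$. As an immediate consequence, $X$ is not contained in any proper $\QQ$-subvariety of $Y$. Now suppose toward a contradiction that the set $G$ of $\QQ$-generic points of $Y$ lying in $X$ is not Zariski-dense in $X$, and set $Z := \overline{G}$, a proper closed subvariety of $X$. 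Every point of $X \setminus Z$ fails to be $\QQ$-generic in $Y$, and therefore lies in some proper $\QQ$-subvariety of $Y$. Such subvarieties correspond to radical ideals in the countable ring $\QQ[x_1,\ldots,x_m]$ and can therefore be enumerated as $W_1, W_2, \ldots$; consequently
\begin{equation*}
X = Z \;\cup\; \bigcup_{i=1}^{\infty} (W_i \cap X),
\end{equation*}
where each $W_i \cap X$ is a \emph{proper} closed subvariety of $X$, since $X \not\subseteq W_i$ by the preceding step.

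The main obstacle is the final step: invoking the standard fact that a positive-dimensional irreducible variety over $\CC$ cannot be covered by countably many proper closed subvarieties, which contradicts the displayed equation and finishes the proof. One subtlety worth flagging is that $X$ itself need not be defined over $\QQ$, so a priori one might have $W_i \cap X = X$ for some $W_i$; it is precisely the identification $X_\QQ = Y$ that rules this out. The remainder of the argument is bookkeeping with the definition of $\QQ$-genericity.
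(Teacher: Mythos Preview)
The paper does not supply its own proof of this lemma; it simply quotes the result from \cite[Lemma A.6]{GTT}. So there is no in-paper argument to compare against, and the relevant question is only whether your proof is correct.

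It is. The two ingredients you use---that the $\QQ$-Zariski closure of $X$ must be all of $Y$ once $X$ contains a $\QQ$-generic point of $Y$, and that a positive-dimensional irreducible variety over $\CC$ cannot be written as a countable union of proper closed subvarieties---are exactly the right ones, and this is the standard route to the result. Your handling of the subtlety that $X$ need not itself be defined over $\QQ$ (so that one must explicitly argue $W_i \cap X \subsetneq X$ via the identification $X_\QQ = Y$) is clean and correct. One minor stylistic remark: the contradiction framing is unnecessary, since the argument shows directly that the non-generic locus in $X$ is a countable union of proper closed subsets, whose complement in an irreducible variety over an uncountable field is automatically dense; but this is a matter of taste, not a gap.
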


\subsection{The measurement variety}

Recall that for a graph $G = (V,E)$,
we denote its $d$-dimensional edge measurement map by $m_{d,G} : \CC^{nd} \rightarrow \CC^E$.

\begin{definition}
The \emph{$d$-dimensional measurement variety} of a graph $G$ (on $n$ vertices), denoted by $M_{d,G}$, is the Zariski-closure of $m_{d,G}(\CC^{nd})$.
\end{definition}

We shall frequently use the following lemma on generic points. It follows by applying \cite[Lemmas 4.4, A.7, A.8]{GTT} to the varieties $\CC^{nd}$, $M_{d,G}$ and the map $m_{d,G}$.

\begin{lemma}\label{lemma:genericimage}
Let $x\in M_{d,G}$ be a point in the measurement variety of $G$. Then $x$ is generic in $M_{d,G}$ if and only if there is a generic point $p\in \CC^{nd}$ for which
$x=m_{d,G}(p)$.
\end{lemma}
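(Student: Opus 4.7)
The plan is to treat this lemma as a direct instance of standard facts about $\QQ$-polynomial maps between varieties defined over $\QQ$, applied to the map $m_{d,G} \colon \CC^{nd} \to \CC^E$. The setup satisfies all the needed hypotheses: $\CC^{nd}$ is irreducible and defined over $\QQ$, $m_{d,G}$ has rational coefficients (its coordinates are sums of squares of differences of the $p_v$), and by the very definition of $M_{d,G}$ the image $m_{d,G}(\CC^{nd})$ is Zariski-dense in $M_{d,G}$. I would prove the two implications separately.

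For the easy direction, assume $p \in \CC^{nd}$ is generic and set $x = m_{d,G}(p)$. If $g \in \QQ[y_1,\dots,y_{|E|}]$ vanishes at $x$, then $g \circ m_{d,G}$ lies in $\QQ[p_1,\dots,p_{nd}]$ and vanishes at the generic point $p$; genericity forces $g \circ m_{d,G}$ to be identically zero, so $g$ vanishes on $m_{d,G}(\CC^{nd})$ and therefore on its Zariski-closure $M_{d,G}$. Hence $x$ is generic in $M_{d,G}$.

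For the harder direction, suppose $x \in M_{d,G}$ is generic. Since $m_{d,G}$ is dominant onto $M_{d,G}$, the fiber dimension theorem gives that the generic fiber has dimension $nd - \dim M_{d,G}$; because $x$ itself is generic, its fiber $m_{d,G}^{-1}(x)$ has an irreducible component $F'$ of this same dimension that is defined over $\QQ(x)$. Choosing $p \in F'$ to be generic over $\QQ(x)$, additivity of transcendence degree yields
\[
\trdeg{\QQ}{\QQ(p)} = \trdeg{\QQ(x)}{\QQ(p)} + \trdeg{\QQ}{\QQ(x)} = (nd - \dim M_{d,G}) + \dim M_{d,G} = nd,
\]
so $p$ is generic in $\CC^{nd}$ and by construction satisfies $m_{d,G}(p) = x$.

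The main obstacle is the reverse direction: the delicate point is ensuring that $m_{d,G}^{-1}(x)$ actually has an irreducible component of the expected dimension defined over $\QQ(x)$, rather than only over some larger field, so that a point of it generic over $\QQ(x)$ truly exists. This is precisely the piece of bookkeeping that is isolated in the cited \cite[Lemmas 4.4, A.7, A.8]{GTT}, so in practice the proof reduces to checking that their hypotheses hold in our setting and invoking them.
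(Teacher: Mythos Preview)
Your proposal is correct and follows essentially the same approach as the paper: both reduce the lemma to invoking \cite[Lemmas 4.4, A.7, A.8]{GTT} for the dominant $\QQ$-polynomial map $m_{d,G}\colon \CC^{nd}\to M_{d,G}$. The paper simply cites those lemmas without further comment, whereas you give a useful expanded sketch---a self-contained argument for the forward direction and the transcendence-degree bookkeeping for the reverse direction---before correctly identifying that the remaining technical point (existence of a fiber component of the right dimension over which one can pick a point generic over $\QQ(x)$) is exactly what those cited lemmas supply.
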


It is known that the measurement variety, being the Zariski-closure of the image of an irreducible variety defined over $\QQ$, is also an irreducible variety defined over $\QQ$. It follows from the definition and basic topological considerations that if $E' \subseteq E$ is a subset of edges inducing a subgraph $G'$ of $G$, then $M_{d,G'} = \overline{\pi_{E'}(M_{d,G})}$, where $\pi_{E'} : \CC^E \rightarrow \CC^{E'}$ is the projection onto the coordinate axes corresponding to $E'$, see \cite[Lemma 3.8]{GJ}.

In what follows we shall frequently compare the measurement
varieties of different graphs, say $G = (V,E)$ and $H = (V',E')$, that have the same number of edges. Since $M_{d,G}$ and $M_{d,H}$ lie in different ambient spaces, to compare them we must specify an identification between $\CC^E$ and $\CC^{E'}$. To this end, we introduce the following notation. Let $\psi: E \rightarrow E'$ be a bijection between the edge sets of $G$ and $H$. We write that $M_{d,G} =_\psi M_{d,H}$ if $\Psi(M_{d,G}) = M_{d,H}$, where $\Psi : \CC^{E} \rightarrow \CC^{E'}$ is the mapping induced by $\psi$ in the natural way. Similarly, we write $M_{d,G} \subseteq_\psi M_{d,H}$ if $\Psi(M_{d,G}) \subseteq M_{d,H}$. 

The following results show that $\rigiditymatroid{G}$ is ``encoded'' in the measurement variety in some sense. This has been 
observed before, see e.g. \cite{GJ,GTT,RST}.
Using the terminology of the latter paper, the situation can be summarized by saying that the algebraic matroid corresponding to the variety $M_{d,G}$ is $\rigiditymatroid{G}$. 

\begin{lemma}\label{lemma:varietydimension}
Let $G$ be a graph on $n$ vertices. Then 
\begin{equation*}
\dim(M_{d,G})=r_{d}(G).
\end{equation*}
In particular, for $n \geq d+1$ we have $\dim(M_{d,G}) \leq nd - \binom{d+1}{2}$ and equality holds if and only if $G$ is rigid in $\RR^d$. Moreover, $G$ is $\Rd$-independent if and only if $M_{d,G} = \CC^E$.
\end{lemma}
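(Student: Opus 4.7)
The plan is to reduce the identity $\dim(M_{d,G})=r_d(G)$ to the standard algebraic-geometric fact that, for a dominant polynomial map between irreducible varieties defined over $\QQ$, the dimension of the target equals the rank of the Jacobian of the map at a generic point of the source. First I would pick a generic configuration $p \in \CC^{nd}$. By the definition of the rigidity matrix (which is $1/2$ times the Jacobian of $m_{d,G}$, as recalled in the preliminaries), the Jacobian of $m_{d,G}$ at $p$ has rank equal to $\rank R(G,p)=r_d(G)$, the last equality being the definition of the $d$-dimensional rigidity matroid of $G$.

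Next I would invoke the generic-rank principle for dominant morphisms, which is essentially the content of the Appendix~A lemmas of~\cite{GTT} that already underlie Lemma~\ref{lemma:genericimage}: since $m_{d,G}: \CC^{nd} \to M_{d,G}$ is dominant by the very definition of the measurement variety, at a generic source point $p$ the rank of the differential of $m_{d,G}$ equals $\dim M_{d,G}$. Combining the two observations yields $\dim M_{d,G}=r_d(G)$.

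The ``in particular'' assertions are then short consequences of this identity. For the bound $\dim M_{d,G} \leq nd - \binom{d+1}{2}$ when $n \geq d+1$, note that $G$ is a subgraph of $K_n$, so $r_d(G) \leq r_d(K_n)$; since $K_n$ is rigid in $\RR^d$ for $n \geq d+1$, Gluck's theorem (Theorem~\ref{thm:gluck}) gives $r_d(K_n) = nd - \binom{d+1}{2}$. The rigidity characterization of $G$ itself is then a direct further application of Gluck's theorem. Finally, for the $M$-independence statement, recall that $\CC^E$ is an irreducible variety of dimension $|E|$, so any proper subvariety has strictly smaller dimension; since $M_{d,G} \subseteq \CC^E$ is irreducible, we have $M_{d,G} = \CC^E$ if and only if $\dim M_{d,G} = |E|$, which by the main identity is equivalent to $r_d(G) = |E|$, i.e.\ to $G$ being $M$-independent in $\RR^d$.

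The only step that is not a one-line reduction is the generic-rank principle used in the second paragraph; this is where one might expect an obstacle. However, it is entirely standard, and the required instances have already been encapsulated in the Appendix~A lemmas of~\cite{GTT} that are cited in this section, so no new technology is needed beyond carefully assembling facts already in hand.
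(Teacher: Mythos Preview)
Your argument is correct and is essentially the standard proof; note, however, that the paper does not actually prove this lemma at all---it states it as a known fact with references to \cite{GJ,GTT,RST}, so there is no ``paper's own proof'' to compare against. Your sketch (Jacobian rank at a generic point equals $r_d(G)$, combined with the generic-rank principle for dominant morphisms, then Gluck's theorem and irreducibility for the corollaries) is exactly the argument one finds in those cited sources.
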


\begin{theorem}\label{thm:matroidisomorphism}
Let $G$ and $H$ be graphs with the same number of edges and suppose that $M_{d,G} =_\psi M_{d,H}$ under some edge bijection $\psi: E(G) \rightarrow E(H)$. Then $\varpsi$ defines an isomorphism between $\rigiditymatroid{G}$ and $\rigiditymatroid{H}$.
\end{theorem}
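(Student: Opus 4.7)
The plan is to show that the bijection $\psi$ preserves the rank function of the rigidity matroid; since a matroid is determined by its rank function, this immediately yields the desired isomorphism. The key tools are Lemma~\ref{lemma:varietydimension}, which identifies $r_d(F)$ with $\dim(M_{d,F})$, together with the projection identity (cited just before the definition of the measurement variety) that for any edge subset $E' \subseteq E(F)$ inducing a subgraph $F'$, one has $M_{d,F'} = \overline{\pi_{E'}(M_{d,F})}$.

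Fix any $E' \subseteq E(G)$, and let $E'' = \psi(E') \subseteq E(H)$, with $G'$ and $H'$ the corresponding induced subgraphs. Writing $\Psi : \CC^{E(G)} \to \CC^{E(H)}$ for the coordinate relabelling induced by $\psi$, the hypothesis $M_{d,G} = M_{d,H}$ means $\Psi(M_{d,G}) = M_{d,H}$. Since coordinate projections commute with coordinate relabellings, projecting both sides onto the edges in $E'$ and $E''$ respectively, and then taking Zariski-closures, gives
\begin{equation*}
\Psi|_{E'}\bigl(\overline{\pi_{E'}(M_{d,G})}\bigr) = \overline{\pi_{E''}(M_{d,H})},
\end{equation*}
i.e.\ $M_{d,G'} = M_{d,H'}$ under the restricted edge bijection $\psi|_{E'}$. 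In particular, $\dim(M_{d,G'}) = \dim(M_{d,H'})$, and Lemma~\ref{lemma:varietydimension} then yields $r_d(G') = r_d(H')$.

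Since $E' \subseteq E(G)$ was arbitrary, this shows that $\psi$ carries the rank function of $\mathcal{R}_d(G)$ to that of $\mathcal{R}_d(H)$, and hence defines a matroid isomorphism. There is no genuine obstacle in this argument: the essential algebro-geometric content is already packaged into Lemma~\ref{lemma:varietydimension} and the projection identity for measurement varieties, and the remaining work is just the bookkeeping verification that projecting $M_{d,G}$ onto an edge subset corresponds exactly to restricting the rigidity matroid to that subset.
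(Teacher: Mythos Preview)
Your proof is correct. Note, however, that the paper does not actually supply its own proof of this theorem: it is stated with references to \cite{GJ,GTT,RST}, with the remark that the rigidity matroid is the algebraic matroid of the measurement variety. Your argument is precisely the standard unpacking of that remark---ranks of edge subsets are read off as dimensions of coordinate projections of $M_{d,G}$, via Lemma~\ref{lemma:varietydimension} and the projection identity---so it matches the approach implicit in the cited references.
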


The following result shows that the measurement variety 
also encodes the space of stresses of generic frameworks. This follows from the fact that $S(G,p)$ is the orthogonal complement of $\Span(R(G,p))$ in $\CC^E$ using standard results in differential geometry, see \cite[Lemma 2.21]{GHT} or \cite[Lemma 4.10]{GTT}. 
\begin{lemma}\label{lemma:spaceofstresses}
Let $G$ be a graph and $(G,p)$ a generic realization in $\CC^d$ for some $d \geq 1$. Let $x = m_{d,G}(p) \in M_{d,G}$. Then the space of stresses $S(G,p)$ is the orthogonal complement of the tangent space $T_x(M_{d,G})$ in $\CC^E$.
\end{lemma}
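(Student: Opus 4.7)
The plan is to deduce the lemma directly from the fact that the rigidity matrix is (up to a scalar) the Jacobian of the rigidity map, combined with the standard identification of the tangent space of an image variety at a generic point with the image of the differential.

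First, I would recall that the rigidity matrix $R(G,p)$ equals $\tfrac{1}{2} J_{m_{d,G}}(p)$, where $J_{m_{d,G}}(p)$ denotes the Jacobian matrix of $m_{d,G} : \CC^{nd} \to \CC^E$ at $p$. Consequently, the row span of $R(G,p)$ coincides with the image of the differential $dm_{d,G}|_p : \CC^{nd} \to \CC^E$. By definition, $S(G,p) = \ker(R(G,p)^T)$, so $S(G,p)$ is exactly the orthogonal complement (with respect to the standard bilinear form on $\CC^E$) of $\Span(R(G,p))$. Thus the lemma reduces to showing
\begin{equation*}
T_x M_{d,G} = \Span(R(G,p)) \quad \text{inside } \CC^E.
\end{equation*}

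The containment $\Span(R(G,p)) \subseteq T_x M_{d,G}$ is a general fact about polynomial maps: for any polynomial $f \in I(M_{d,G})$, the composition $f \circ m_{d,G}$ vanishes identically on $\CC^{nd}$, so its gradient at $p$ (which is $J_{m_{d,G}}(p)^T \nabla f(x)$ by the chain rule) vanishes, showing that $\nabla f(x)$ annihilates every row of $R(G,p)$. Hence every row of $R(G,p)$ lies in the kernel of the Jacobian of the defining ideal of $M_{d,G}$ at $x$, which is precisely $T_x M_{d,G}$.

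For the reverse containment I would compare dimensions. Because $(G,p)$ is generic, Lemma \ref{lemma:genericimage} implies that $x$ is generic in $M_{d,G}$, and any generic point of an irreducible variety defined over $\QQ$ is smooth, so $\dim T_x M_{d,G} = \dim M_{d,G}$. By Lemma \ref{lemma:varietydimension}, $\dim M_{d,G} = r_d(G)$, which by definition is the rank of $R(G,p)$ at any generic $p$, i.e.\ $\dim \Span(R(G,p))$. Since $\Span(R(G,p)) \subseteq T_x M_{d,G}$ and both sides have the same dimension, equality holds. Taking orthogonal complements finishes the proof.

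The only subtle point is the justification that at a generic (hence smooth) image point the tangent space of $M_{d,G}$ matches the image of the Jacobian; this is the step where genericity of $p$ is essential, since at non-generic points the Jacobian can drop rank while the tangent space of $M_{d,G}$ does not. Everything else is a direct application of the definitions together with Lemmas \ref{lemma:genericimage} and \ref{lemma:varietydimension}.
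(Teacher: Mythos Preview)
Your argument is correct and follows exactly the approach the paper indicates: the paper does not give a detailed proof but simply notes that the result follows from $S(G,p) = \Span(R(G,p))^{\perp}$ together with standard differential-geometric facts (citing \cite[Lemma 2.21]{GHT} and \cite[Lemma 4.10]{GTT}), and you have spelled out precisely those facts via the chain rule for one containment and the dimension count (using Lemmas \ref{lemma:genericimage} and \ref{lemma:varietydimension}) for the other.
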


The lemma implies that if $(G,p)$ and $(G,q)$ are generic frameworks in $\CC^d$, then $S(G,p)\not= S(G,q)$ if and only if the Gauss fibers
corresponding to $m_{d,G}(p)$ and $m_{d,G}(q)$ are different. We shall use this corollary later.

\subsection{Unlabeled reconstruction}\label{subsec:unlabeled}

In what follows it will be convenient to use the following notions.
We say that two frameworks $(G,\p)$ and $(H,\q)$ in $\CC^d$ are {\it length-equivalent}
({\it under the bijection $\psi$})
if there is a bijection
$\psi$ between the edge sets of $G$ and $H$ such that for every edge $e$
of $G$, the complex squared length of $e$ in  $(G,\p)$ is equal to the complex squared length of
$\psi(e)$ in $(H,\q)$. For an edge bijection $\psi: E(G) \rightarrow E(H)$ and a graph isomorphism $\varphi: V(G) \rightarrow V(H)$, we say that \emph{$\psi$ is induced by $\varphi$} if for every edge $e = uv \in E(G)$ we have $\psi(e) = \varphi(u)\varphi(v)$.

\begin{definition}\label{def:strongrec}
Let $(G,\p)$ be a generic realization of the graph $G$ in $\CC^d$.
We say that $(G,\p)$ is \textit{strongly reconstructible} 
if for every generic framework $(H,\q)$ in $\CC^d$ that is
length-equivalent to $(G,\p)$ under some edge bijection $\psi: E(G) \rightarrow E(H)$, where $H$ has the same number of vertices as $G$,
$\psi$ is induced by a graph isomorphism $\varphi : V(G) \rightarrow V(H)$.
\end{definition}

In this paper we shall mainly focus on the following stronger property, where
the condition on the number of vertices of $H$ is omitted.

\begin{definition}\label{def:fullrec}
Let $G$ be a graph without isolated vertices and let $(G,\p)$ be a generic realization of $G$ in $\CC^d$.
We say that $(G,\p)$ is \textit{fully reconstructible} 
if for every generic framework $(H,\q)$ in $\CC^d$ that is
length-equivalent to $(G,\p)$ under some edge bijection $\psi: E(G) \rightarrow E(H)$, where $H$ has no isolated vertices,
$\psi$ is induced by a graph isomorphism $\varphi : V(G) \rightarrow V(H)$. 
\end{definition}

Note that, since we assume $(G,\p)$ to be generic, its edge lengths are pairwise distinct,
and hence the bijection $\psi$ is unique in the above definitions. We also point out that when considering full reconstructibility, it is natural to only consider graphs without isolated vertices, since from any framework we can create other length-equivalent frameworks by adding isolated vertices. On the other hand, in the case of strong reconstructibility it is sensible to consider graphs with isolated vertices. In fact, it follows immediately from the definitions that $G$ is fully reconstructible in $\CC^d$ if and only if every graph obtained from $G$ by adding zero or more isolated vertices is strongly reconstructible in $\CC^d$.

As Theorem \ref{theorem:strongmeasurereconstructibility} below shows, both strong and full reconstructibility of a generic framework can be characterized in terms of a certain uniqueness condition on the measurement variety $M_{d,G}$ of the underlying graph, and in fact it is this formulation of reconstructibility that we shall most commonly use throughout the paper. This also implies that these reconstructibility notions are generic properties of a graph in the sense that if there is a generic framework $(G,p)$ in $\CC^d$ which is strongly  (resp.\ fully) reconstructible, then every generic realization of $G$ in $\CC^d$ is strongly (resp.\ fully) reconstructible. This motivates the following definition.

\begin{definition}\label{def:graphrec}
A graph $G$ is said to be \textit{(generically) strongly reconstructible}
(respectively \textit{(generically) fully reconstructible}) in 
$\CC^d$ if every generic realization $(G,p)$ of $G$ in $\CC^d$
is strongly (respectively fully) reconstructible.
\end{definition}

\begin{theorem}\label{theorem:strongmeasurereconstructibility}
Let $G$ be a graph and $d \geq 1$ be fixed. The following are equivalent.
\begin{enumerate}[label=(\alph*)]
    \item $G$ is generically strongly reconstructible (generically fully reconstructible, respectively) in $\CC^d$.
    \item There exists some generic framework $(G,\p)$ in $\CC^d$ that is strongly reconstructible (fully reconstructible, respectively).
    \item Whenever $M_{d,G} =_\psi M_{d,H}$ under an edge bijection $\psi: E(G) \rightarrow E(H)$ for some graph $H$, where $H$ has the same number of vertices as $G$ (where $H$ has an arbitrary number of vertices, respectively), $\psi$ is induced by a graph isomorphism.
\end{enumerate}
\end{theorem}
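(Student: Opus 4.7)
The plan is to establish the chain of implications $(\text{i}) \Rightarrow (\text{ii}) \Rightarrow (\text{iii}) \Rightarrow (\text{i})$. The first implication is immediate from the definitions, as every graph admits generic realizations in $\CC^d$.

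For $(\text{ii}) \Rightarrow (\text{iii})$, I would start from a strongly (resp.\ fully) reconstructible generic framework $(G,p)$ together with the assumption that $M_{d,G} = M_{d,H}$ under some edge bijection $\psi$ (where $|V(H)|=|V(G)|$ in the strong case, and $H$ has no isolated vertices in the full case). Let $\Psi : \CC^{E(G)} \to \CC^{E(H)}$ be the coordinate permutation induced by $\psi$. Since $\Psi$ is defined over $\QQ$, it preserves genericity with respect to $\QQ$-varieties. Lemma~\ref{lemma:genericimage} tells us that $m_{d,G}(p)$ is generic in $M_{d,G}$, so $\Psi(m_{d,G}(p))$ is generic in $\Psi(M_{d,G}) = M_{d,H}$. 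Applying Lemma~\ref{lemma:genericimage} in the reverse direction then produces a generic configuration $q$ of $H$ in $\CC^d$ with $m_{d,H}(q) = \Psi(m_{d,G}(p))$, so $(G,p)$ and $(H,q)$ are length-equivalent under $\psi$. The reconstructibility hypothesis on $(G,p)$ now forces $\psi$ to be induced by a graph isomorphism.

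For $(\text{iii}) \Rightarrow (\text{i})$, I would take an arbitrary generic realization $(G,p)$ and any generic framework $(H,q)$ length-equivalent to $(G,p)$ under some edge bijection $\psi$ (satisfying the appropriate vertex-count condition). With $\Psi$ as above, length-equivalence translates to $\Psi(m_{d,G}(p)) = m_{d,H}(q)$. By Lemma~\ref{lemma:genericimage}, the common point $y := \Psi(m_{d,G}(p))$ is generic in both $\Psi(M_{d,G})$ and $M_{d,H}$. Both of these are irreducible varieties defined over $\QQ$, and any point generic in an irreducible $\QQ$-variety $V$ satisfies precisely the polynomials in $I(V)$ with rational coefficients; consequently $I(\Psi(M_{d,G})) = I(M_{d,H})$, hence $\Psi(M_{d,G}) = M_{d,H}$. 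In other words, $M_{d,G} = M_{d,H}$ under $\psi$, and hypothesis $(\text{iii})$ supplies the desired graph isomorphism, so $(G,p)$ is strongly (resp.\ fully) reconstructible.

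The only delicate point is the interplay between $\QQ$-genericity and the coordinate-permutation map $\Psi$: one must use that $\Psi$ is $\QQ$-linear to transport generic points correctly, and the standard fact that two irreducible $\QQ$-varieties sharing a $\QQ$-generic point must coincide. Once these algebraic-geometric ingredients are isolated, the two nontrivial implications reduce to a bookkeeping exercise applying Lemma~\ref{lemma:genericimage} in both directions.
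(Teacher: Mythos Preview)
Your argument is correct and follows the natural route: the cycle $(\text{i})\Rightarrow(\text{ii})\Rightarrow(\text{iii})\Rightarrow(\text{i})$, with Lemma~\ref{lemma:genericimage} doing the work of transporting generic points back and forth between configuration space and the measurement variety, and the standard fact that two irreducible $\QQ$-varieties sharing a common $\QQ$-generic point must coincide. The paper itself does not give a proof of this theorem; it simply cites \cite[Theorem~3.4]{GJ} for the strong version and remarks that the same proof goes through verbatim for the full version. Your writeup is precisely the argument one would expect that reference to contain, so there is nothing to contrast.

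One small point worth making explicit (you already handle it correctly in the $(\text{ii})\Rightarrow(\text{iii})$ step): in the ``full'' version, condition~(iii) should be read with the implicit assumption that $H$ has no isolated vertices, matching Definition~\ref{def:fullrec}. Otherwise one could always violate (iii) by adjoining isolated vertices to $G$ itself, and the equivalence would fail for trivial reasons.
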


The ``strongly reconstructible'' part of Theorem \ref{theorem:strongmeasurereconstructibility} is \cite[Theorem 3.4]{GJ}. The same proof works for the ``fully reconstructible'' version after omitting the condition on the number of vertices of $H$.

We close this section by recalling the main result of \cite{GTT}.

\begin{theorem}\label{thm:theranstronglyrec} \cite[Theorem 3.4]{GTT}
Let $G$ be a graph on at least $d+2$ vertices, where $d \geq 1$. Suppose that 
\begin{itemize}
    \item $d = 1$ and $G$ is $3$-connected, or
    \item $d \geq 2$ and $G$ is globally rigid in $\RR^d$.
\end{itemize}
Then $G$ is strongly reconstructible in $\CC^d$.
\end{theorem}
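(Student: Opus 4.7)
Plan: My approach is to pass through the measurement-variety reformulation of strong reconstructibility provided by Theorem \ref{theorem:strongmeasurereconstructibility}. That is, I would show that whenever $M_{d,G} = M_{d,H}$ under an edge bijection $\psi$ and $|V(H)| = |V(G)| = n$, the bijection $\psi$ is induced by a graph isomorphism $\varphi : V(G) \to V(H)$.

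The first step is to translate the varietal hypothesis into concrete generic frameworks. Picking a generic $p \in \CC^{nd}$ and setting $x = m_{d,G}(p)$, the point $\Psi(x) \in M_{d,H}$ is again generic, so by Lemma \ref{lemma:genericimage} there is a generic $q \in \CC^{nd}$ with $m_{d,H}(q) = \Psi(x)$; in particular $(G,p)$ and $(H,q)$ are length-equivalent under $\psi$. Theorem \ref{thm:matroidisomorphism} then yields a matroid isomorphism $\rigiditymatroid{G} \cong \rigiditymatroid{H}$; since $G$ is globally rigid it is rigid, so $r_d(H) = r_d(G) = nd - \binom{d+1}{2}$, and Gluck's Theorem \ref{thm:gluck} combined with $|V(H)|=n$ gives that $H$ is also rigid in $\CC^d$.

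The heart of the proof, and the step I expect to be the main obstacle, is passing from edge-level data to a vertex bijection. My plan is to leverage the maximum-rank stress matrix characterization of global rigidity: invoking the complex analogue of the Connelly/Gortler--Healy--Thurston theorem as already used in the proof of Theorem \ref{theorem:globallyrigidcharacterization}, there is a stress $\omega \in S(G,p)$ whose $n\times n$ stress matrix $\Omega$ has rank $n-d-1$, with $\ker \Omega$ equal to the affine lift of $p$. By Lemma \ref{lemma:spaceofstresses} the map $\Psi$ carries $S(G,p)$ bijectively onto $S(H,q)$, so $\omega$ corresponds to a stress $\omega'$ of $(H,q)$ with stress matrix $\Omega'$ on $V(H)$. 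A dimension count on Gauss fibers at the generic point $x$---which must match on both sides of the equality $M_{d,G} = M_{d,H}$---forces $\operatorname{rank} \Omega' = n-d-1$, so $\ker \Omega'$ is the affine lift of $q$. Aligning these two $(d+1)$-dimensional affine lifts provides a candidate bijection $\varphi : V(G) \to V(H)$ under which $q\circ\varphi$ is an affine image of $p$.

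To finish, I would apply Lemma \ref{lemma:connelly} together with Lemma \ref{lemma:mindegreeconic}. By Hendrickson's Theorem \ref{theorem:hendrickson} the globally rigid graph $G$ is $(d+1)$-connected, so every vertex of $G$ has degree at least $d$ and the edge directions of $(G,p)$ do not lie on a conic at infinity. Consequently any affine map sending $p$ to a configuration equivalent to $p$ must be a congruence; combined with global rigidity of $(G,p)$ and the fact that $\Psi$ identifies $\omega$ with $\omega'$---forcing $\varphi$ to carry edges of $G$ on which $\omega$ is nonzero to the corresponding edges of $H$, and iterating over a spanning collection of maximum-rank stresses---this yields a graph isomorphism $\varphi$ with $\psi(uv) = \varphi(u)\varphi(v)$ for every $uv \in E(G)$. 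The $d=1$ case follows by the same scheme, simplified by the fact that the $1$-dimensional rigidity matroid coincides with the graphic matroid of $G$, so that $3$-connectivity makes a direct cycle comparison sufficient in place of the stress-matrix analysis.
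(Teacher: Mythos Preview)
This theorem is not proved in the present paper; it is quoted from \cite{GTT} and used as a black box, so there is no in-paper proof to compare against.

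The decisive gap in your outline is the step claiming that ``aligning these two $(d+1)$-dimensional affine lifts provides a candidate bijection $\varphi : V(G) \to V(H)$.'' Even if both stress matrices $\Omega$ and $\Omega'$ have rank $n-d-1$, their kernels live in $\CC^{V(G)}$ and $\CC^{V(H)}$---two $n$-dimensional spaces with no given identification. Knowing that $\ker\Omega$ is the affine span of $p$ and $\ker\Omega'$ the affine span of $q$ merely recovers $p$ and $q$ up to an affine change of coordinates, which you already had; it singles out no bijection between the indexing vertex sets. The point is that $\Omega$ is assembled from the edge weights $\omega_e$ \emph{together with} the incidence structure of $G$, while $\Omega'$ uses $H$'s incidences; $\psi$ transports only the weights, so there is no reason for $\Omega$ and $\Omega'$ to be permutation-conjugate, and no vertex map falls out. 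Your closing clause (``iterating over a spanning collection of maximum-rank stresses'') already presupposes that $\varphi$ carries edges of $G$ to their $\psi$-images in $H$, which is exactly what has not been established. For orientation, the argument in \cite{GTT} is of a different character: rather than attempting to read a vertex bijection off a single stress matrix, it uses the Gauss-fiber structure of $M_{d,G}$ (cf.\ Lemma \ref{lemma:infinitegaussfiber} and Corollary \ref{corollary:measurementvarietycontainment} above) to carry out a descent to lower-dimensional measurement varieties; the $d=1$ endpoint is Whitney's $2$-isomorphism theorem, which converts a graphic-matroid isomorphism on a $3$-connected graph into the desired graph isomorphism.
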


In the next section, we shall strengthen this result by proving that globally rigid graphs on at least $d+2$ vertices are, in fact, fully reconstructible in $\CC^d$ for $d\geq 2$. 
The cases $d = 1,2$ were already settled in \cite{GJ} by verifying the following equivalence.

\begin{theorem}\label{theorem:GJmain}\cite[Theorem 5.19, Corollary 5.22, Theorem 5.1]{GJ}
Let $G$ be a graph on at least $d+2$ vertices and without isolated vertices, where $d \in \{1,2\}$. Then the following are equivalent.
\begin{itemize}
    \item $d = 1$ and $G$ is $3$-connected or $d=2$ and $G$ is globally rigid in $\RR^2$.
    \item $G$ is strongly reconstructible in $\CC^d$.
    \item $G$ is fully reconstructible in $\CC^d$.
\end{itemize}
\end{theorem}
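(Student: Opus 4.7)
The plan is to establish the chain (A) $\Rightarrow$ (C) $\Rightarrow$ (B) $\Rightarrow$ (A). The implication (C) $\Rightarrow$ (B) is immediate from the definitions, since strong reconstructibility is just full reconstructibility restricted to test graphs $H$ on $|V(G)|$ vertices. The implication (A) $\Rightarrow$ (B) is precisely Theorem \ref{thm:theranstronglyrec}. So the real work lies in (A) $\Rightarrow$ (C) and (B) $\Rightarrow$ (A).

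For (A) $\Rightarrow$ (C), I would use Theorem \ref{theorem:strongmeasurereconstructibility} to reduce the problem: it suffices to show that whenever $M_{d,G} = M_{d,H}$ under an edge bijection $\psi$ with $H$ having no isolated vertices, we must have $|V(H)| = |V(G)|$. Once the vertex count is pinned down, Theorem \ref{thm:theranstronglyrec} (via the strong reconstructibility side already provided by (A)) ensures $\psi$ comes from a graph isomorphism. Note that $G$ and $H$ automatically share the same number of edges, so Theorem \ref{thm:matroidisomorphism} yields $\mathcal{R}_d(G) \cong \mathcal{R}_d(H)$, matching rank and $M$-connectivity.

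For $d = 2$, since $G$ is globally rigid in $\RR^2$ on at least four vertices, Theorem \ref{theorem:JacksonJordan}(a) gives that $G$ is $M$-connected in $\RR^2$, hence so is $H$. Theorem \ref{theorem:JacksonJordan}(b) then forces $H$ to be redundantly rigid, in particular rigid, in $\RR^2$. Applying Theorem \ref{thm:gluck} to both graphs gives $r_2(G) = 2|V(G)| - 3$ and $r_2(H) = 2|V(H)| - 3$, and matroid isomorphism equates the ranks, so $|V(G)| = |V(H)|$. For $d = 1$, the rigidity matroid coincides with the graphic matroid, which for a graph without isolated vertices is connected exactly when the graph is $2$-connected (Whitney). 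Since the $3$-connected $G$ is $2$-connected, it is $M$-connected in $\RR^1$, so $H$ is as well, forcing $H$ to be $2$-connected and in particular connected. Therefore $r_1(G) = |V(G)|-1$ and $r_1(H) = |V(H)|-1$, which again yields $|V(G)| = |V(H)|$.

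For (B) $\Rightarrow$ (A), I would argue the contrapositive and, for each $G$ failing the combinatorial condition, exhibit a non-isomorphic graph $H$ on $|V(G)|$ vertices with $M_{d,G} = M_{d,H}$ under some edge bijection. For $d = 2$, the Jackson--Jord\'an characterization says that failure of global rigidity in $\RR^2$ means $G$ is not $3$-connected or not redundantly rigid; in each regime, a suitable surgery (swapping halves across a $2$-separation, or rearranging an $M$-bridge) produces the desired $H$. The $d = 1$ case is analogous, with failure of $3$-connectivity enabling a Whitney-flip-style construction. The main obstacle is this converse direction: one must carefully verify that the constructed $H$ has the \emph{entire} measurement variety equal to that of $G$ (not merely a single shared length vector), which requires a dimension-by-dimension analysis of how the constructions act on the rigidity map. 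The (A) $\Rightarrow$ (C) direction, by contrast, is essentially a rank-counting argument once Theorems \ref{theorem:JacksonJordan}, \ref{thm:matroidisomorphism}, and \ref{thm:gluck} are combined.
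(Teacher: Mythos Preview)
The paper does not prove Theorem~\ref{theorem:GJmain}; it is quoted verbatim from \cite{GJ} (as the citation ``Theorem 5.19, Corollary 5.22, Theorem 5.1'' indicates), so there is no in-paper proof to compare against. That said, your proposal can be assessed on its own merits.

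Your argument for (A) $\Rightarrow$ (C) is correct and clean. The key observation---that for $d\le 2$, $M$-connectivity of $H$ (inherited via Theorem~\ref{thm:matroidisomorphism}) forces $H$ to be rigid, after which Gluck's rank formula pins down $|V(H)|$---is exactly the special feature of low dimensions that makes this direction easy. In fact, this is simpler than the paper's own proof of the $d\ge 3$ analogue (Theorem~\ref{theorem:fullyreconstructible}), which must work much harder precisely because $M$-connectivity no longer implies rigidity in higher dimensions; the paper even explicitly defers the $d=2$ case back to Theorem~\ref{theorem:GJmain} rather than rederiving it.

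The direction (B) $\Rightarrow$ (A), however, is where your proposal has a genuine gap. You correctly identify the contrapositive strategy and the relevant case split (failure of $3$-connectivity vs.\ presence of an $M$-bridge), but the constructions you gesture at are not fully specified, and the details are nontrivial. For instance, in the $M$-bridge case one must exhibit an edge bijection $\psi$ with $M_{d,G}=M_{d,H}$ that is \emph{not} induced by any isomorphism; when $G-e$ is already rigid, there is no other non-edge that is an $M$-bridge, so ``rearranging the $M$-bridge'' to a new location fails, and one needs a different witness (this is what \cite[Theorem 5.21]{GJ}, cited later in the present paper, supplies). Similarly, in the $2$-separation case you must argue that the Whitney-type flip genuinely yields an edge bijection not induced by an isomorphism, which can fail when the two sides are symmetric. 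These issues are handled in \cite{GJ} by a careful case analysis that your sketch does not reproduce; you rightly flag this as the main obstacle, but as written the proposal does not close it.
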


In Section \ref{section:examples}, we shall give examples showing that for $d \geq 3$, there are fully reconstructible graphs in $\CC^d$ (on at least $d+2$ vertices) that are not globally rigid in $\RR^d$. On the other hand, we do not know any example of a strongly reconstructible graph in $\CC^d$ on at least $d+2$ vertices that is not fully reconstructible in $\CC^d$, although it seems likely that such a graph exists.

\section{Necessary conditions for global rigidity}\label{section:main}

In this section we prove our main results: globally rigid graphs in $\RR^d$ on at least $d+2$ vertices are $\Rd$-connected (Theorem \ref{theorem:mconnected}) and fully reconstructible in $\CC^d$ (Theorem \ref{theorem:fullyreconstructible}). We start with some technical results about the structure of the measurement variety that we shall use in these proofs. Apart from Lemma \ref{lemma:productvariety}, the lemmas in the next subsection are implicit in \cite{GTT}. 

\subsection{The structure of the measurement variety}

The next lemma implies that the measurement variety of an $\Rd$-separable graph $G$ is the product of the
measurement varieties of its $\Rd$-connected components. A special case of this statement when $G$ contains an $\Rd$-bridge was proved in \cite[Theorem 3.13]{GJ}.

\begin{lemma}\label{lemma:productvariety}
Let $d \geq 1$ and let $G = (V,E)$ be a graph. Suppose that there is a partition $E = E_1 \cup E_2$  of $E$ into non-empty subsets such that $r_d(E) =r_d(E_1) + r_d(E_2)$.
Let $G_1$ and $G_2$ be the subgraphs induced by $E_1$ and $E_2$, respectively. Then $M_{d,G} = M_{d,G_1} \times M_{d,G_2}$ (under the identification $\CC^E = \CC^{E_1} \times \CC^{E_2})$.
\end{lemma}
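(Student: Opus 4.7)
My plan is to prove both inclusions separately, the first being essentially tautological and the second following from a dimension comparison.

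First, I would observe that the measurement map factors coordinate-wise across the partition: under the identification $\CC^E = \CC^{E_1} \times \CC^{E_2}$, for every configuration $p \in \CC^{nd}$ we have $m_{d,G}(p) = (m_{d,G_1}(p), m_{d,G_2}(p))$, simply because each edge of $G$ lies in exactly one of $G_1, G_2$ and its squared length in $(G,p)$ depends only on $p$. Consequently $m_{d,G}(\CC^{nd}) \subseteq m_{d,G_1}(\CC^{nd}) \times m_{d,G_2}(\CC^{nd}) \subseteq M_{d,G_1} \times M_{d,G_2}$. Since the product of two Zariski-closed sets is Zariski-closed in the product space, taking closures yields $M_{d,G} \subseteq M_{d,G_1} \times M_{d,G_2}$.

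For the reverse containment I would use irreducibility together with Lemma \ref{lemma:varietydimension}. Both $M_{d,G_1}$ and $M_{d,G_2}$ are irreducible, being Zariski-closures of images of the irreducible variety $\CC^{nd}$ under polynomial maps; since we work over the algebraically closed field $\CC$, the product $M_{d,G_1} \times M_{d,G_2}$ is then irreducible as well, with dimension $\dim(M_{d,G_1}) + \dim(M_{d,G_2})$. Similarly $M_{d,G}$ is irreducible. By Lemma \ref{lemma:varietydimension} and the hypothesis $r_d(G) = r_d(G_1) + r_d(G_2)$,
\begin{equation*}
\dim(M_{d,G}) = r_d(G) = r_d(G_1) + r_d(G_2) = \dim(M_{d,G_1} \times M_{d,G_2}).
\end{equation*}

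Because an irreducible variety cannot properly contain another irreducible variety of the same dimension, the inclusion of the previous paragraph must be an equality, yielding $M_{d,G} = M_{d,G_1} \times M_{d,G_2}$ as desired. I do not expect any genuine obstacle here; the only points requiring care are ensuring that irreducibility of the product is invoked correctly (which holds because $\CC$ is algebraically closed) and that the dimension identity comes from the combinatorial rank hypothesis via Lemma \ref{lemma:varietydimension}. Everything else is a direct translation of the statement through the definition of the measurement variety.
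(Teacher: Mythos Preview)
Your proof is correct and follows essentially the same route as the paper's: establish the inclusion $M_{d,G} \subseteq M_{d,G_1} \times M_{d,G_2}$, then use irreducibility of the product together with the dimension identity (via Lemma~\ref{lemma:varietydimension} and the rank hypothesis) to force equality. The only cosmetic difference is that the paper obtains the inclusion by noting that each $M_{d,G_i}$ is the closure of the projection of $M_{d,G}$, whereas you argue directly that the measurement map factors through the product; both are immediate.
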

\begin{proof}
For $i=1,2$, $M_{d,G_i}$ arises as the Zariski-closure of the projection of $M_{d,G}$ onto the coordinate axes corresponding to $E_i$, so we have that $M_{d,G} \subseteq M_{d,G_1} \times M_{d,G_2}$. By Theorem \ref{theorem:productirreducible} and Lemma \ref{lemma:varietydimension}, $M_{d,G_1} \times M_{d,G_2}$ is an irreducible variety of dimension $r_d(E_1) + r_d(E_2)$. Since this dimension equals the dimension $r_d(E)$ of the irreducible variety $M_{d,G}$, the two varieties must be equal.
\end{proof}

\begin{lemma}\label{lemma:affines}
Let $G$ be a graph and $(G,p)$ a generic framework in $\CC^d$ with full affine span. Let $\affineclass \subseteq \CC^{nd}$ denote the set of affine images of $p$, and let $\gaussfiber \subseteq M_{d,G}$ be the Gauss fiber corresponding to $m_{d,G}(p)$. Then $\overline{m_{d,G}(\affineclass)} \subseteq \overline{F}$.  
\end{lemma}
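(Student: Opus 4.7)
Let $x = m_{d,G}(p)$. Since $(G,p)$ is generic, Lemma~\ref{lemma:genericimage} yields that $x$ is generic in $M_{d,G}$, hence smooth. The plan is to produce a Zariski-dense subset $U$ of $\overline{\affineclass}$ with $m_{d,G}(U) \subseteq F$; the desired inclusion $\overline{m_{d,G}(\affineclass)} \subseteq \overline{F}$ will then follow from the standard fact that a regular map sends the closure of a set into the closure of its image, combined with $\affineclass \subseteq \overline{U} = \overline{\affineclass}$.

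Let $U \subseteq \affineclass$ be the set of those $q$ such that $q$ has full affine span and $m_{d,G}(q)$ is a smooth point of $M_{d,G}$. The closure $\overline{\affineclass}$ is irreducible, being the Zariski-closure of the image of the irreducible variety $\CC^{d\times d} \times \CC^d$ under the polynomial map $(A,b) \mapsto (Ap_v + b)_{v \in V}$. The two conditions defining $U$ each cut out a Zariski-open subset of $\affineclass$, and $U$ is nonempty since $p$ itself satisfies both. Hence $U$ is Zariski-dense in $\overline{\affineclass}$.

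To verify $m_{d,G}(U) \subseteq F$, fix $q \in U$ and set $y = m_{d,G}(q)$; the task is to show $T_y M_{d,G} = T_x M_{d,G}$. By Lemma~\ref{lemma:affineimagestress}, full affine span of both $p$ and $q$ gives $S(G,q) = S(G,p)$, and Lemma~\ref{lemma:spaceofstresses} applied to the generic $(G,p)$ gives $S(G,p) = (T_x M_{d,G})^\perp$. On the other hand, since $m_{d,G}$ is a regular map of $\CC^{nd}$ into $M_{d,G}$, the image of its Jacobian $2R(G,q)$ at $q$, namely $\Span(R(G,q))$, must be contained in $T_y M_{d,G}$; taking orthogonal complements in $\CC^E$ yields $(T_y M_{d,G})^\perp \subseteq S(G,q)$. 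Chaining these, $(T_y M_{d,G})^\perp \subseteq (T_x M_{d,G})^\perp$, equivalently $T_x M_{d,G} \subseteq T_y M_{d,G}$, and smoothness of both $x$ and $y$ forces the two sides to have common dimension $\dim M_{d,G}$, so the tangent spaces coincide.

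The main technical point is this last step: Lemma~\ref{lemma:spaceofstresses} identifies the space of stresses with the orthogonal complement of the tangent space only at generic frameworks, whereas affine images of $p$ typically are not generic in $\CC^{nd}$. Replacing the equality $S = (TM_{d,G})^\perp$ with the one-sided containment supplied by the Jacobian, and then closing the gap by a dimension count at the smooth point $y$, is what lets the argument run without any genericity assumption on $q$.
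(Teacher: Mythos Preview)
Your proof is correct, but the route differs from the paper's. The paper takes as its dense subset the set $\affineclass^g$ of \emph{generic} configurations inside $\affineclass$; density comes from Lemma~\ref{lemma:genericdense} (applied to the irreducible linear space $\affineclass$), and then Lemma~\ref{lemma:spaceofstresses} applies verbatim at both $p$ and $q$, yielding $T_{m(q)}M_{d,G}=T_{m(p)}M_{d,G}$ in one line. You instead use the larger open set $U$ of affine images with full span and smooth image, and compensate for the loss of genericity of $q$ by replacing the equality $S(G,q)=(T_yM_{d,G})^\perp$ with the one-sided containment coming from the Jacobian, closed off by a dimension count at the smooth point. Both arguments are sound; the paper's is shorter because it can invoke Lemma~\ref{lemma:spaceofstresses} symmetrically, while yours trades that for a more explicit differential-geometric step and avoids any appeal to Lemma~\ref{lemma:genericdense}. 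A minor simplification on your side: $\affineclass$ is already a linear subspace of $\CC^{nd}$, hence closed and irreducible, so there is no need to pass through $\overline{\affineclass}$.
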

\begin{proof}
For clarity, we shall write $m$ and $M$ instead of $m_{d,G}$ and $M_{d,G}$ in the following. For any $q \in \affineclass$, if $q$ has full affine span, then by Lemma \ref{lemma:affineimagestress} we have $S(G,p) = S(G,q)$.

Let $\affineclass^g$ denote the set of frameworks in $\affineclass$ that are generic. Since $(G,p)$ is generic, this set is non-empty, and since $\affineclass$ is irreducible (being a linear space), Lemma \ref{lemma:genericdense} implies $\overline{\affineclass^g} = \affineclass$. Now for any $q \in \affineclass^g$, we have $S(G,q) = S(G,p)$ by Lemma \ref{lemma:affineimagestress}, and it follows by Lemma \ref{lemma:spaceofstresses} that $T_{m(q)} M = T_{m(p)} M$, or in other words, $m(q) \in F$.

This shows that $m(\affineclass^g) \subseteq F$. Taking Zariski-closures and using the continuity of $m$ with respect to the Zariski topology, we have 
\begin{equation*}
    \overline{F} \supseteq \overline{m(\affineclass^g)} = \overline{m(\overline{\affineclass^g})} = \overline{m(\affineclass)},
\end{equation*}
as desired.
\end{proof}

Although we shall not use this fact, we note that by \cite[Lemma 4.6]{GTT}, $m_{d,G}(\affineclass)$ is a linear space, and in particular it is closed.

Let $G$ be a graph and $d \geq 1$. We say that a Gauss fiber $F$ of $M_{d,G}$ is \emph{generic} if it contains a point that is generic in $M_{d,G}$. It will also be convenient to use the following notion in the next proof. Let $d \geq 2$ and let $n$ denote the number of vertices of $G$. We say that a framework $(G,p)$ in $\CC^d$ is a \emph{lifting} of the framework $(G,q)$ in $\CC^{d-1}$ if $(G,q)$ is obtained by projecting the image of each vertex in $(G,p)$ onto the first $d-1$ coordinate axes. If $(G,q)$ is generic, then the generic liftings of $(G,q)$ form a dense subset of the space of liftings of $(G,q)$. This follows from the basic fact that for any finite set $S \subseteq \CC$ that is algebraically independent over $\QQ$, the numbers $x \in \CC$ for which $S \cup \{x\}$ is also algebraically independent form a dense subset of $\CC$.

\begin{lemma}\label{lemma:infinitegaussfiber}
Let $G = (V,E)$ be a graph and $d \geq 2$.
\begin{enumerate}[label=\emph{\alph*)}]
    \item If $G$ is not $\Rd$-independent, then for every point $x \in M_{d-1,G} \subseteq M_{d,G}$ that is generic in $M_{d-1,G}$ there are an infinite number of generic Gauss fibers $F$ of $M_{d,G}$ with $x \in \overline{F}$.
    \item If $G$ is globally rigid in $\RR^d$ on at least $d+2$ vertices and $x \in M_{d,G} \setminus M_{d-1,G}$, then there are at most a finite number of generic Gauss fibers $F$ of $M_{d,G}$ with $x \in \overline{F}$. 
\end{enumerate}
\end{lemma}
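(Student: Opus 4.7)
The two parts of the lemma use complementary arguments.

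For part (a), I would start from a generic realization $(G, p')$ in $\CC^{d-1}$ and let $x := m_{d-1,G}(p')$, which is generic in $M_{d-1,G}$ by Lemma \ref{lemma:genericimage}. Embedding $p'$ in $\CC^{nd}$ as $p_0$ (appending a zero $d$-th coordinate), a direct block computation gives $R(G, p_0) = [R(G, p') \mid 0]$, so $S(G, p_0) = S(G, p')$ has dimension $|E| - r_{d-1}(G)$, strictly larger than the generic $d$-dimensional value $|E| - r_d(G)$ (assuming $r_d(G) > r_{d-1}(G)$; if equality holds, then $M_{d,G} = M_{d-1,G}$ by dimensional equality of irreducible varieties and the claim is immediate). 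I then consider the perturbations $p(t) := p_0 + t \eta$ and perform a first-order analysis of the stress equation $R(G, p(t))^T \omega(t) = 0$, which identifies $L(\eta) := \lim_{t \to 0} S(G, p(t))$ as a specific $(|E| - r_d(G))$-dimensional subspace of $S(G, p_0)$. Varying $\eta$ yields a positive-dimensional family of such subspaces, and by Lemma \ref{lemma:spaceofstresses} a corresponding family of limit tangent spaces of $M_{d,G}$; a closure-of-Gauss-graph argument in $M_{d,G} \times \text{Grassmannian}$ then identifies each limit with the tangent space of a generic Gauss fiber whose closure contains $x$. The main obstacle is verifying that $\eta \mapsto L(\eta)$ is genuinely non-constant, which requires a careful analysis of the rank of the rigidity matrix derivative at $p_0$.

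For part (b), the plan is to use global rigidity to pin down the structure of each generic Gauss fiber containing $x$ in its closure. For $F = F_p$ such a fiber, Theorem \ref{theorem:globallyrigidcharacterization} combined with Lemma \ref{lemma:spaceofstresses} gives $\overline{F_p} = \overline{m_{d,G}(\affineclass_p)}$, which is the linear subspace $m_{d,G}(\affineclass_p)$ by \cite[Lemma 4.6]{GTT}. Hence $x = m_{d,G}(q)$ for some $q \in \affineclass_p$; since $x \notin M_{d-1,G}$, this $q$ has full affine span. Hendrickson's theorem (Theorem \ref{theorem:hendrickson}) gives minimum degree at least $d+1$, so Lemmas \ref{lemma:mindegreeconic} and \ref{lemma:connelly} apply, and within $\affineclass_p$ the configurations mapping to $x$ form a single congruence class. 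Distinct generic Gauss fibers through $x$ thus correspond to distinct Euclidean-congruence classes of full-affine-span configurations in $m_{d,G}^{-1}(x)$, and finiteness reduces to the dimension bound $\dim m_{d,G}^{-1}(x) = \binom{d+1}{2}$.

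Establishing this dimension bound is the main obstacle for part (b). The plan is to argue by contradiction: if $\dim m_{d,G}^{-1}(x)$ exceeds $\binom{d+1}{2}$, one obtains a positive-dimensional family of non-congruent full-affine-span realizations of $G$ with common edge lengths $x$. Combining a suitable perturbation of this family with the generic uniqueness of realizations guaranteed by global rigidity (Theorem \ref{theorem:complexrigidity}) should then lead to a contradiction, crucially using the hypothesis $x \notin M_{d-1,G}$ to ensure the perturbed family stays outside the degenerate locus where Lemma \ref{lemma:connelly} would fail.
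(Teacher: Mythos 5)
Your part~(a) diverges from the paper's argument and contains a genuine gap at its central step. The paper (following \cite[Proposition 4.21]{GTT}) fixes the generic $(d-1)$-dimensional configuration $q$ and then inductively constructs infinitely many \emph{generic} lifts $(G,p_i)$ in $\CC^d$ whose projection to the first $d-1$ coordinates is $q$, choosing each new lift to violate the (finitely many) stresses $\omega_j$ of the previous ones; this makes the actual stress spaces $S(G,p_i)$ pairwise distinct, hence the Gauss fibers $F_i$ distinct by Lemma~\ref{lemma:spaceofstresses}, and $x=m_{d,G}(q)\in\overline{F_i}$ follows from Lemma~\ref{lemma:affines} because $q$ (embedded with last coordinate zero) is an affine image of each $p_i$. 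Your perturbation scheme $p(t)=p_0+t\eta$ with a first-order analysis of $\lim_{t\to 0}S(G,p(t))$ does not deliver the conclusion: knowing that $(x,T)$ lies in the closure of the Gauss graph in $M_{d,G}\times Gr$ only says that there are smooth points $y_t\to x$ with $T_{y_t}\to T$; it does \emph{not} say that any single fiber $\{y : T_y = T'\}$ accumulates at $x$, which is what the lemma asserts. To get $x\in\overline{F_t}$ for each fixed $t$ you need the affine relationship between $p(t)$ and $p_0$ (i.e.\ $\eta$ supported on the last coordinate, plus Lemma~\ref{lemma:affines} and genericity of $p(t)$), none of which is in your write-up. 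Moreover, the non-constancy of $\eta\mapsto L(\eta)$, which you correctly identify as the crux, is left unproved, and even if proved, distinctness of the \emph{limits} of stress spaces would not by itself give infinitely many distinct fibers. The paper's device of dodging finitely many stresses at each step is both simpler and actually closes the argument.

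For part~(b) the paper gives no self-contained proof; it cites \cite[Proposition 4.20, Remark 4.8]{GTT}. Your sketch is a reasonable reconstruction of what such a proof would look like (generic Gauss fiber closures equal $\overline{m_{d,G}(\affineclass_p)}$ via Theorem~\ref{theorem:globallyrigidcharacterization} and Lemma~\ref{lemma:spaceofstresses}; $x\notin M_{d-1,G}$ forces full affine span; Lemmas~\ref{lemma:mindegreeconic} and~\ref{lemma:connelly} reduce to congruence classes in $m_{d,G}^{-1}(x)$), but the finiteness/dimension bound that everything hinges on is only described as a plan, with a vague contradiction argument. Since that bound is precisely the content of the cited results in \cite{GTT}, the honest options are to cite them as the paper does or to actually carry out the fiber-dimension computation; as written, part~(b) is not a proof.
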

\begin{proof}
\emph{a)} [Following \cite[Proposition 4.21]{GTT}] For clarity, we shall write $m$ instead of $m_{d,G}$ in the following. We note first that since $G$ is not $\Rd$-independent, it has at least $d+2$ vertices and consequently any generic realization of $G$ in $\CC^d$ has full affine span. 

Let $x \in M_{d-1,G}$ be a generic point. By Lemma \ref{lemma:genericimage} there is a generic framework $(G,q)$ in $\CC^{d-1}$ with $m_{d-1,G}(q) = x$. It is enough to find an infinite sequence of generic frameworks $(G,p_i), i \in \mathbb{N}$ in $\CC^d$, with corresponding (generic) Gauss fibers $F_i, i \in \mathbb{N}$, such that ${F}_i \neq {F}_j$ for $i \neq j$ and such that $q$ is an affine image of $p_i$, since by Lemma \ref{lemma:affines} this implies $x \in \overline{F}_i$.

We shall find such frameworks $(G,p_i)$ inductively. In fact, each framework will be a lifting of $(G,q)$. For the base case, let $p_1$ be an arbitrary generic lifting of $(G,q)$. Now suppose that for some $i > 1$, we have already found suitable frameworks $(G,p_j), j < i$. Since $G$ is not $\Rd$-independent, each of these frameworks has a non-zero stress $\omega_j$. For a given lifting $(G,p)$ of $(G,q)$ to have $\omega_j$ as an equlibrium stress, the last coordinates $p(v)_d, v \in V$ must satisfy $|V|$ linear equations determined by $\omega_j$, and since $\omega_j$ is non-zero, some of these equations are non-trivial. It follows that for each $j < i$, the liftings of $(G,q)$ that do not have $\omega_j$ as an equilibrium stress form a dense open subset of the space of liftings of $(G,q)$. This implies that we can find a generic lifting $(G,p_i)$ that does not satisfy any of the stresses $\omega_j, j < i$, and so in particular $S(G,p_i) \neq S(G,p_j)$ for $j < i$.
Let $F_i$ denote the Gauss fiber corresponding to $m(p_i)$. 
By Lemma \ref{lemma:spaceofstresses} we must have $T_{m(p_i)} M_{d,G} \neq T_{m(p_j)} M_{d,G}$, and hence $F_i \neq F_j$ for $j < i$.

\vspace{0.3em}
\emph{b)} This is an immediate consequence of Proposition 4.20 and Remark 4.8 of \cite{GTT}.
\end{proof}

\begin{corollary}\label{corollary:measurementvarietycontainment}
Let $G$ be a globally rigid graph in $\mathbb{R}^d$ on at least $d+2$ vertices for some $d \geq 2$ and suppose that $M_{d,G} =_\varpsi M_{d,H}$ under some edge bijection $\varpsi$ for some graph $H$ not necessarily on the same number of vertices as $G$. Then $M_{d-1,H} \subseteq_\psi M_{d-1,G}$.
\end{corollary}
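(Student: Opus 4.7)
The plan is to combine the two parts of Lemma \ref{lemma:infinitegaussfiber}, applying part (a) to $H$ and part (b) to $G$. Throughout, I will use the edge bijection $\varpsi$ to identify $\CC^{E(G)}$ with $\CC^{E(H)}$, so that all the measurement varieties in sight live in a common ambient space; under this identification the hypothesis reads $M_{d,G} = M_{d,H}$, and the goal becomes $M_{d-1,H} \subseteq M_{d-1,G}$. Smoothness, tangent spaces, genericity in the variety, and Gauss fibers all transfer unambiguously across this linear $\QQ$-isomorphism.

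First I would verify the hypothesis of Lemma \ref{lemma:infinitegaussfiber}(a) for $H$, namely that $H$ is not $M$-independent in $\RR^d$. Hendrickson's theorem (Theorem \ref{theorem:hendrickson}) gives that $G$ is redundantly rigid in $\RR^d$; a rigid graph on at least $d+1$ vertices that were also $M$-independent would violate Gluck's rank criterion (Theorem \ref{thm:gluck}) after deleting any edge, so $G$ is not $M$-independent. By Theorem \ref{thm:matroidisomorphism}, the hypothesis $M_{d,G} = M_{d,H}$ forces $\rigiditymatroid{H} \cong \rigiditymatroid{G}$, so $H$ is not $M$-independent either. Now Lemma \ref{lemma:infinitegaussfiber}(a) applied to $H$ produces a point $y \in M_{d-1,H}$ that is generic in $M_{d-1,H}$ and lies in the closures of infinitely many distinct generic Gauss fibers of $M_{d,H}$. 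Viewing the same data inside $M_{d,G} = M_{d,H}$, the point $y$ lies in the closures of infinitely many distinct generic Gauss fibers of $M_{d,G}$. Since $G$ is globally rigid on at least $d+2$ vertices, the contrapositive of Lemma \ref{lemma:infinitegaussfiber}(b) then forces $y \in M_{d-1,G}$.

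To upgrade this single-point membership to the full varietal containment, I would argue that $M_{d-1,G} \cap M_{d-1,H}$ is a $\QQ$-closed subvariety of the irreducible $\QQ$-variety $M_{d-1,H}$ containing the generic point $y$; any proper $\QQ$-closed subvariety of an irreducible $\QQ$-variety is cut out by some rational polynomial outside the defining ideal, which would vanish at $y$ and contradict its genericity. Hence $M_{d-1,H} \subseteq M_{d-1,G}$, as required. The main obstacle I anticipate is precisely this last step: Lemma \ref{lemma:infinitegaussfiber}(a) only guarantees one favorable point, so bridging from ``a single generic point of $M_{d-1,H}$ sits in $M_{d-1,G}$'' to ``all of $M_{d-1,H}$ sits in $M_{d-1,G}$'' must go through a density argument that uses, in an essential way, that both varieties are defined over $\QQ$. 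Once that bookkeeping is clean, the combination of parts (a) and (b) of Lemma \ref{lemma:infinitegaussfiber} does the rest in one stroke.
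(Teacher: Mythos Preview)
Your proof is correct and follows essentially the same route as the paper: establish that $H$ is not $M$-independent (the paper deduces this directly from $G$ having no $M$-bridges, you via Theorem~\ref{thm:matroidisomorphism}, but these are equivalent here), apply Lemma~\ref{lemma:infinitegaussfiber}(a) to $H$ and part~(b) to $G$ to force a generic point of $M_{d-1,H}$ into $M_{d-1,G}$, and then use that both varieties are defined over $\QQ$ to upgrade to the full containment. Your final step via the $\QQ$-closed intersection is just a rephrasing of the paper's polynomial-vanishing argument.
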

\begin{proof}
For clarity, we shall suppress the role of $\varpsi$ and assume that $M_{d,G}$ and $M_{d,H}$ are in the same ambient space $\CC^E$, so that $M_{d,G} = M_{d,H}$. Let $R$ be the ring of polynomials over $\CC$ with variables indexed by $E$ and let $I(M_{d-1,G}),I(M_{d-1,H}) \subseteq R$ be the set of polynomials vanishing on $M_{d-1,G}$ and $M_{d-1,H}$, respectively. Moreover, let $R_\QQ,I_\QQ(M_{d-1,G})$ and $I_\QQ(M_{d-1,H})$ denote the subset of $R,I(M_{d-1,G})$ and $I(M_{d-1,H})$, respectively, consisting of polynomials with rational coefficients.

Theorem \ref{theorem:hendrickson} implies that $G$ has no $\Rd$-bridges, so in particular it cannot be $\Rd$-independent. By Theorem \ref{thm:matroidisomorphism} it follows that $H$ is not $\Rd$-independent either, and thus
part \emph{a)} of Lemma \ref{lemma:infinitegaussfiber} implies that there is a generic point $x \in M_{d-1,H} \subseteq M_{d,H}$ such that there is an infinite number of 
generic Gauss fibers $F \subseteq M_{d,H}$ with $x \in \overline{F}$. Part \emph{b)} of the same lemma then implies that $x$ is in $M_{d-1,G}$. From this and the fact that $x$ is generic in $M_{d-1,H}$ we have the following chain of containments:
\begin{equation}\label{eq:ideal}
I_\QQ(M_{d-1,G}) \subseteq \{f \in R_{\QQ}: f(x) = 0\} = I_\QQ(M_{d-1,H}).
\end{equation}

Since both $M_{d-1,G}$ and $M_{d-1,H}$ are defined over $\QQ$, $I_\QQ(M_{d-1,G})$ and $I_\QQ(M_{d-1,H})$ generate $I(M_{d-1,G})$ and $I(M_{d-1,H})$, respectively. Thus (\ref{eq:ideal}) also implies $I(M_{d-1,G}) \subseteq I(M_{d-1,H})$, which is equivalent to $M_{d-1,H} \subseteq M_{d-1,G}$. 
\end{proof}

\subsection{Globally rigid graphs are \texorpdfstring{$\Rd$}{Rd}-connected}

We are ready to prove the first main result of this section.

\begin{theorem}
\label{theorem:mconnected}
Let $G = (V,E)$ be a globally rigid graph in $\RR^d$ on $n\geq d+2$ vertices. 
Then $G$ is $\Rd$-connected.
\end{theorem}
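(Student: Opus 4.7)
For $d=1,2$ the statement is already contained in Theorem~\ref{theorem:JacksonJordan}(a), so I will focus on $d \geq 3$. The plan is to argue by contradiction using the product structure of the measurement variety together with Corollary~\ref{corollary:measurementvarietycontainment}. Assume $G$ is globally rigid in $\RR^d$ on $n\geq d+2$ vertices but $M$-separable, so that there is a partition $E=E_1\cup E_2$ into non-empty sets with $r_d(G)=r_d(G_1)+r_d(G_2)$, where $G_i$ is the subgraph induced by $E_i$. Let $H:=G_1\sqcup G_2$ be the disjoint union. Since $G$ is $(d+1)$-connected by Hendrickson (Theorem~\ref{theorem:hendrickson}), it is in particular connected, so $V(G_1)\cap V(G_2)\neq \varnothing$ and hence $|V(H)|>|V(G)|$.

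The first step is to observe that $M_{d,G}=M_{d,H}$ under the natural edge bijection. Indeed, by Lemma~\ref{lemma:productvariety} applied to the partition $(E_1,E_2)$ we have $M_{d,G}=M_{d,G_1}\times M_{d,G_2}$, and the same product description holds trivially for the disjoint union $H$. Since $G$ is globally rigid on $n\geq d+2$ vertices, Corollary~\ref{corollary:measurementvarietycontainment} then yields
\begin{equation*}
M_{d-1,H}\subseteq M_{d-1,G}.
\end{equation*}
Using $M_{d-1,H}=M_{d-1,G_1}\times M_{d-1,G_2}$ and Lemma~\ref{lemma:varietydimension}, the dimensions of the two sides satisfy $r_{d-1}(G_1)+r_{d-1}(G_2)\leq r_{d-1}(G)$, while subadditivity of matroid rank gives the reverse inequality; hence equality holds, and the inclusion of irreducible varieties is actually an equality. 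In particular, $G$ is $M$-separable in $\RR^{d-1}$ using the \emph{same} partition $(E_1,E_2)$.

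The second step is to iterate this descent all the way down to dimension $1$. If iteration succeeds, then $G$ is $M$-separable in $\RR^1$ with respect to the same partition, i.e.\ the graphic matroid of $G$ decomposes as a direct sum, which forces $G$ to have a cut vertex or to be disconnected. This directly contradicts the $(d+1)$-connectivity of $G$ coming from Hendrickson's theorem, completing the proof.

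The hard part will be making the iteration rigorous, since Corollary~\ref{corollary:measurementvarietycontainment} is stated under the hypothesis that $G$ is globally rigid in the starting dimension, and global rigidity need not descend from $\RR^d$ to $\RR^{d-1}$. My plan to get around this is to note that the relation $M_{k,G}=M_{k,H}$ once established at $k=d$ seems to propagate to all $k\leq d$: once we know $M_{d-1,G}=M_{d-1,G_1}\times M_{d-1,G_2}$, a parallel application of the Gauss-fiber machinery of Lemma~\ref{lemma:infinitegaussfiber} (using only Hendrickson's conclusion that $G$ has no $M$-bridges, together with the finiteness-of-fibers half applied once to $G$ at level $d$) should let one conclude that each $M_{k-1,G}$ must lie in $M_{k-1,G_1}\times M_{k-1,G_2}$. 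If this stepwise descent fails for some $k\in\{2,\dots,d-1\}$, one should be able to extract a dimension contradiction already at level $d$ by combining $r_d(G)=r_d(G_1)+r_d(G_2)$ with Gluck's theorem and the bound $s=|V_1\cap V_2|\geq d+1$ coming from $(d+1)$-connectivity (when $V_1\neq V\neq V_2$) or handling the degenerate spanning case separately. Either way, the conclusion is a genuinely combinatorial contradiction with Hendrickson's $(d+1)$-connectivity.
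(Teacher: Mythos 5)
Your first descent step is correct and rather elegant: forming the disjoint union $H=G_1\sqcup G_2$, observing $M_{d,G}=M_{d,G_1}\times M_{d,G_2}=M_{d,H}$, invoking Corollary~\ref{corollary:measurementvarietycontainment}, and then squeezing $r_{d-1}(G_1)+r_{d-1}(G_2)\leq r_{d-1}(G)$ against rank subadditivity does show that the same partition separates $\mathcal{R}_{d-1}(G)$. The genuine gap is exactly where you flag it, and neither of your proposed repairs works. Corollary~\ref{corollary:measurementvarietycontainment} rests on part \emph{b)} of Lemma~\ref{lemma:infinitegaussfiber}, i.e.\ on \cite[Proposition 4.20]{GTT}, which needs $G$ to be globally rigid \emph{in the dimension at which you are descending}; the finiteness statement at level $d$ concerns points of $M_{d,G}\setminus M_{d-1,G}$ and says nothing about Gauss fibers of $M_{d-1,G}$ through points outside $M_{d-2,G}$, so ``one application at level $d$'' cannot drive the step from $d-1$ to $d-2$. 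Your fallback --- a dimension contradiction at level $d$ from $r_d(G)=r_d(G_1)+r_d(G_2)$, Gluck's theorem and $|V_1\cap V_2|\geq d+1$ --- is provably impossible: the count only yields $d\,|V_1\cap V_2|\geq\binom{d+1}{2}$, and more decisively, Figure~\ref{figure:mseparable2} exhibits $4$-connected, redundantly rigid, $M$-separable graphs in $\RR^3$, so Hendrickson's conditions together with $M$-separability are consistent and no purely combinatorial contradiction at level $d$ exists. The honest repair of your route is to cite the fact (a consequence of the Connelly--Whiteley coning theorem \cite{coning}, combined with the vertex-addition property of generic global rigidity) that generic global rigidity in $\RR^{d}$ descends to $\RR^{d-1}$; with that in hand the iteration closes, and indeed for $d=3$ a single descent step already suffices via Theorem~\ref{theorem:JacksonJordan}. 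As written, however, the argument is incomplete for every $d\geq 3$.

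For comparison, the paper avoids any dimension descent. It works entirely at level $d$: from $M_{d,G}=M_{d,G_1}\times M_{d,G_2}$ and a generic $m_{d,G}(p)=(x_1,x_2)$ it produces a generic $q$ with $m_{d,G}(q)=(x_1,4x_2)$, checks $S(G,q)=S(G_1,p)\oplus S(G_2,2p)=S(G,p)$, and uses Lemmas~\ref{lemma:mindegreeconic} and~\ref{lemma:connelly} to see that $q$ cannot be an affine image of $p$, contradicting Theorem~\ref{theorem:globallyrigidcharacterization}. That argument is self-contained, works for all $d\geq 1$, and is where the global rigidity hypothesis actually enters.
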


\begin{proof} 

Let $(G,p)$ be a generic framework in $\CC^d$.
We shall show that if $G$ is not $\Rd$-connected, then 
there is a framework $(G,q)$ in $\CC^d$ such that $S(G,p) = S(G,q)$ but 
where $q$ is not an affine image of $p$.
Then, from Theorem \ref{theorem:globallyrigidcharacterization},
$G$ cannot be globally rigid in $\RR^d$, and we are done.

If $G$ is not $\Rd$-connected, 
there must be a partition $E = E_1 \cup E_2$ of $E$ into non-empty subsets with $r_d(E) = r_d(E_1) + r_d(E_2)$. Let $G_1 = (V,E_1)$ and $G_2 = (V,E_2)$ and let $(G_1,p), (G_2,p)$ denote the respective sub-frameworks of $(G,p)$.  


By Theorem \ref{theorem:hendrickson}, $G$ contains no $\Rd$-bridges, that is, every edge is contained in an $\Rd$-circuit. Since any $\Rd$-circuit that contains edges from $E_1$ must be contained in $G_1$, we have that $G_1$ contains an $\Rd$-circuit, so in particular it has a subgraph of minimum degree at least $d + 1$. 
Thus, by Lemma \ref{lemma:mindegreeconic}, applied to this subgraph, the edge directions of $(G_1,p)$ do not lie on a conic at infinity.

Now we find our promised $(G,q)$.
By Lemma \ref{lemma:productvariety}, $M_{d,G} = M_{d,G_1} \times M_{d,G_2}$. Let $m_{d,G}(p) = (x_1,x_2) \in M_{d,G}$. Now $x_2 \in M_{d,G_2}$ implies $4x_2 \in M_{d,G_2}$ and consequently $(x_1,4x_2) \in M_{d,G}$. Since $(G,p)$ was generic, $(x_1,x_2)$ is generic in $M_{d,G}$ by Lemma \ref{lemma:genericimage}, and this implies that $(x_1,4x_2)$ is generic in $M_{d,G}$ as well. Using Lemma \ref{lemma:genericimage}, it follows that there is a generic framework $(G,q)$ in $\CC^d$ with $m_{d,G}(q) = (x_1,4x_2)$. 

 Let us consider the sub-frameworks 
 $(G_1,q)$ and $(G_2,q)$. 
 Both of these frameworks are generic. Also note that $(G_1,q)$ is equivalent to  $(G_1,p)$ and $(G_2,q)$ is equivalent to $(G_2,2p)$. 
 Since $S(G,p) = S(G,2p)$, this implies $S(G,p) = S(G_1,p) \oplus S(G_2,p) = S(G_1,q) \oplus S(G_2,q) = S(G,q)$, as desired. 

We have that $(G_1,q)$ is equivalent to $(G_1,p)$.
As established above,  the edge directions of $(G_1,p)$ do not lie on a conic at infinity. 
Thus it follows 
from Lemma \ref{lemma:connelly} that if $q$ is an affine image of $p$, then $q$ must be
congruent to $p$.
But $q$ is clearly not congruent to $p$ as $(G_2,p)$ is not equivalent to $(G_2,q)$.
Thus $p$ is not an affine image of $q$, as desired.
\end{proof}

Theorem \ref{theorem:mconnected} was known to hold in $\RR^1$ (where global rigidity, $2$-connectivity and $\mathcal{R}_1$-connectivity are equivalent) and in $\RR^2$, see \cite{JJconnrig}
(c.f. Theorem \ref{theorem:JacksonJordan} above). We note that in \cite{JT} it is conjectured that globally rigid graphs are ``non-degenerate'' in $\RR^d$, a condition that is stronger than $\Rd$-connectivity.

Underlying the proof of Theorem \ref{theorem:mconnected} is the following structural observation on the measurement variety of $G$, which we give without details. Since $G$ is globally rigid, \cite[Theorem 4.4]{GHT} and \cite[Lemma 4.24, Remark 4.25]{GTT} imply that for any generic Gauss fiber $F$ of $M_{d,G}$ we have $\dim(\overline{F}) = \binom{d+1}{2}$. On the other hand, it follows from the definitions that if $M_{d,G} = M_{d,G_1} \times M_{d,G_2}$, then $F = F_1 \times F_2$ where $F_1$ and $F_2$ are some generic Gauss fibers of $M_{d,G_1}$ and $M_{d,G_2}$, respectively. Using Lemma \ref{lemma:affines} it can be shown that under the assumptions on $G_1$ made in the proof of Theorem \ref{theorem:mconnected}, we have $\dim(\overline{F_1}) \geq \binom{d+1}{2}$. Since $\dim(\overline{F_2}) \geq 1$, this gives 
\begin{equation*}
\binom{d+1}{2} = \dim(\overline{F}) = \dim(\overline{F_1}) + \dim(\overline{F_2}) > \binom{d+1}{2},
\end{equation*}
a contradiction.

\subsection{Globally rigid graphs are fully reconstructible}

Our goal in this subsection is to prove the following result, which gives an affirmative answer to \cite[Question 7.5]{GTT}.

\begin{theorem}\label{theorem:fullyreconstructible}
Let $d \geq 2$ and let $G$ be a graph on $n \geq d+2$ vertices that is globally rigid in $\RR^d$. Then $G$ is fully reconstructible in $\CC^d$.
\end{theorem}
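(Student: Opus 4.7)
The plan is, by Theorem \ref{theorem:strongmeasurereconstructibility}, to show that whenever $M_{d,G}=M_{d,H}$ under some edge bijection $\psi$ for a graph $H$ with no isolated vertices, $\psi$ must be induced by a graph isomorphism. Theorem \ref{thm:theranstronglyrec} already handles this when $n_H=n$, so the genuine task is to establish the vertex-count equality $n_H=n$; this step is the main obstacle.

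First, Theorem \ref{thm:matroidisomorphism} says that $\psi$ is an isomorphism $\mathcal{R}_d(G)\cong\mathcal{R}_d(H)$, so $H$ inherits every matroidal property of $G$. Combined with Theorem \ref{theorem:mconnected}, this forces $H$ to be $M$-connected in $\RR^d$. A short argument that I would include explicitly shows that $H$ must have minimum degree at least $d+1$: for a vertex $v$ of degree $k\leq d$, the $k$ rows of the rigidity matrix indexed by the star of $v$ are generically full-rank on the $d$ columns corresponding to $v$, so any row dependence must vanish on those rows; thus every edge incident to $v$ would be a coloop, contradicting $M$-connectedness (together with $|E(H)|=|E(G)|\geq 2$, which holds since $G$ is redundantly rigid). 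From $r_d(H)=r_d(G)=dn-\binom{d+1}{2}$ and the rigidity bound $r_d(H)\leq dn_H-\binom{d+1}{2}$ I then obtain $n_H\geq n$.

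For the reverse inequality I would invoke Corollary \ref{corollary:measurementvarietycontainment}, which gives $M_{d-1,H}\subseteq M_{d-1,G}$ and hence $r_{d-1}(H)\leq r_{d-1}(G)$. The standard rank-increment bound $r_d(G)-r_{d-1}(G)\leq n-d$ (reflecting the fact that the trivial-motion space grows by exactly $d$ dimensions when lifting from $\RR^{d-1}$ to $\RR^d$), combined with the rigidity of $G$ in $\RR^d$, forces equality and shows that $G$ is automatically rigid in $\RR^{d-1}$, with $r_{d-1}(G)=(d-1)n-\binom{d}{2}$. Consequently $r_{d-1}(H)\leq(d-1)n-\binom{d}{2}$.

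The remaining and crucial step, which I expect to be the real technical difficulty, is to show that $H$ itself is rigid in $\RR^{d-1}$; once this is established, $r_{d-1}(H)=(d-1)n_H-\binom{d}{2}$ together with the previous bound gives $(d-1)n_H\leq(d-1)n$, so $n_H\leq n$ and hence $n_H=n$. The rigidity of $H$ in the lower dimension does not follow immediately from $M$-connectedness in $\RR^d$; the approach I envisage is to combine the $M$-connectedness of $H$ and the minimum-degree lower bound $\geq d+1$ with the fact that $\mathcal{R}_d(H)\cong\mathcal{R}_d(G)$ transfers the whole circuit structure from the rigid graph $G$, and to supplement this with a refined generic Gauss-fiber analysis along the lines of Lemma \ref{lemma:infinitegaussfiber}. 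Once $n_H=n$ is known, Theorem \ref{thm:theranstronglyrec} finishes the proof.
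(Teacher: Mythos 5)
Your setup is the same as the paper's: reduce to showing $n_H=n$ and then invoke Theorem \ref{thm:theranstronglyrec}. The inequality $n_H\geq n$ is handled correctly (the paper gets it from $k_d\geq\binom{d+1}{2}$, you get it from $r_d(H)\leq dn_H-\binom{d+1}{2}$; both are fine). The problem is the other direction. Your plan hinges on the claim that $H$ is rigid in $\RR^{d-1}$, and this is exactly the step you do not prove. The tools you cite do not deliver it: the matroid isomorphism $\mathcal{R}_d(H)\cong\mathcal{R}_d(G)$ lives only in dimension $d$ and says nothing about $\mathcal{R}_{d-1}(H)$; the containment $M_{d-1,H}\subseteq M_{d-1,G}$ gives only the rank inequality you already used; and while $M$-connectivity does descend to $\RR^{d-1}$ (Theorem \ref{theorem:mconndown}), $M$-connected graphs in dimension $d-1\geq 3$ need not be rigid --- the paper points this out explicitly. (For $d=3$ your route does close, because $M$-connectivity in $\RR^2$ implies rigidity in $\RR^2$ by Jackson--Jord\'an; this is precisely the special case the authors could dispatch easily, and the reason the general case is ``substantially more difficult.'') One also cannot iterate Corollary \ref{corollary:measurementvarietycontainment} down to dimension $2$, where $M$-connectivity would suffice, because that corollary needs \emph{equality} of measurement varieties and global rigidity of $G$ in the lower dimension, neither of which is available.

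The paper escapes this bind by never proving any lower-dimensional rigidity of $H$ at all. Instead it tracks $k_i$, the generic dimension of the space of infinitesimal motions of $H$ in $\CC^i$, for \emph{all} $i=1,\dots,d$. The two facts you derived give $k_d=sd+\binom{d+1}{2}$ and $k_{d-1}\geq s(d-1)+\binom{d}{2}$ with $s=n_H-n\geq 0$. The new ingredient is Lemma \ref{lemma:infinitesimalmotionbound}: the recursion $k_i\geq 2k_{i+1}-k_{i+2}+1$, whose proof rests on Lemma \ref{lemma:infinitesimalrotation} (a specific infinitesimal rotation of a suitably chosen generic framework is not a sum of two motions each supported on $d-1$ of the coordinates, via the hyperplane-avoidance Lemma \ref{lemma:hyperplanecontainment}). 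Unwinding the recursion (Lemma \ref{lemma:inductivebound}) yields $k_1\geq s+1$, while connectivity of $H$ forces $k_1=1$, so $s=0$. If you want to complete your argument you would need either a genuinely new proof that $H$ is rigid in $\RR^{d-1}$, or to adopt this descent-to-dimension-one mechanism; as written, the crucial half of the proof is missing.
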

Theorem~\ref{thm:gluingstronglyreconstructible},
below, will lead to examples
that show that global rigidity
is not necessary for full reconstructibility.


Our proof of Theorem \ref{theorem:fullyreconstructible} uses Theorem \ref{theorem:mconnected}. In fact, as the following theorem shows, the former result is a strengthening of the latter.

\begin{theorem}\label{thm:fullyrecmconnected}
Let $G = (V,E)$ be a graph without isolated vertices and suppose that $G$ is fully reconstructible in $\CC^d$. Then $G$ is $\Rd$-connected.
\end{theorem}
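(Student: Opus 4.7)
My plan is to prove the contrapositive: supposing $G$ is $M$-separable in $\RR^d$, I will exhibit a graph $H$, without isolated vertices, that is not isomorphic to $G$ but satisfies $M_{d,G} = M_{d,H}$ under some edge bijection. By Theorem~\ref{theorem:strongmeasurereconstructibility}, this will show that $G$ is not fully reconstructible in $\CC^d$. The construction of $H$ will come from either ``ungluing'' or ``regluing'' the two halves of an $M$-separation of $G$.

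Concretely, $M$-separability gives a partition $E = E_1 \cup E_2$ into non-empty subsets with $r_d(G) = r_d(G_1) + r_d(G_2)$, where $G_i$ denotes the subgraph spanned by $E_i$ together with its endpoints. Lemma~\ref{lemma:productvariety} then yields $M_{d,G} = M_{d,G_1} \times M_{d,G_2}$. The natural first candidate is $H = G_1 \sqcup G_2$, the disjoint union on separate vertex copies. Its rigidity map factors as a product, so $M_{d,H} = M_{d,G_1} \times M_{d,G_2} = M_{d,G}$ under the obvious edge bijection, and as long as $V(G_1)$ and $V(G_2)$ share at least one vertex in $G$, this $H$ has strictly more vertices than $G$ and so cannot be isomorphic to it.

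The only situation in which this choice fails is when $V(G_1) \cap V(G_2) = \emptyset$ in $G$, i.e.\ when $G$ is itself the disjoint union of $G_1$ and $G_2$. There I will instead go in the opposite direction and identify one vertex of $G_1$ with one vertex of $G_2$, producing an $H$ with $|V(G)| - 1$ vertices (and still without isolated vertices, since the identified vertices each have positive degree). The equality $M_{d,H} = M_{d,G_1} \times M_{d,G_2}$ then follows from translation-invariance of the edge measurement map: given any configurations $p_1, p_2$ of $G_1, G_2$, I can translate $p_2$ so that its distinguished vertex coincides with that of $p_1$, showing that the image of $m_{d,H}$ is the full product $\mathrm{Im}(m_{d,G_1}) \times \mathrm{Im}(m_{d,G_2})$, and Zariski closure yields the claim.

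I do not anticipate any significant obstacle beyond recognizing the need for this second case; the proof is essentially structural, and rests on Lemma~\ref{lemma:productvariety} together with the observation that edge lengths are invariant under translation of a component.
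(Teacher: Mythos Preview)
Your argument is correct and essentially identical to the paper's own proof: both use Lemma~\ref{lemma:productvariety} to factor $M_{d,G}$ as $M_{d,G_1}\times M_{d,G_2}$, then construct a non-isomorphic $H$ by taking the disjoint union $G_1\sqcup G_2$ when $V(G_1)\cap V(G_2)\neq\emptyset$ and by identifying a vertex of $G_1$ with one of $G_2$ when they are already disjoint, in each case concluding non-isomorphism from the vertex count. Your justification via translation invariance for the glued case is slightly more explicit than the paper's, which simply asserts $m_{d,H}(\CC^{n'd}) = m_{d,G_1}(\CC^{n_1d}) \times m_{d,G_2}(\CC^{n_2d})$ and cites the closure-of-product fact in a footnote.
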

\begin{proof}
Suppose, for a contradiction, that there is a partition $E = E_1 \cup E_2$ of $E$ into non-empty subsets with $r_d(E) = r_d(E_1) + r_d(E_2)$, and let $G_1$ and $G_2$ be the subgraphs induced by $E_1$ and $E_2$, respectively. By Lemma \ref{lemma:productvariety}, this implies $M_{d,G} = M_{d,G_1} \times M_{d,G_2}$. If $G_1$ and $G_2$ have at least one vertex in common, then let $H$ be the 
graph consisting of disjoint copies of $G_1$ and $G_2$.
Otherwise, let $H$ be the graph obtained from disjoint copies of $G_1$ and $G_2$ by identifying some vertex of $G_1$ and some vertex of $G_2$.
%
In both cases, we have $m_{d,H}(\CC^{n'd}) = m_{d,G_1}(\CC^{n_1d}) \times m_{d,G_2}(\CC^{n_2d})$, where $n'$, $n_1$ and $n_2$ denote the number of vertices of $H$, $G_1$ and $G_2$, respectively. It follows that $M_{d,H} = M_{d,G_1} \times M_{d,G_2}$.\footnote{This follows from the basic fact that for any $U \subseteq \CC^{E_1}, V \subseteq \CC^{E_2}$ we have $\overline{U \times V} = \overline{U} \times \overline{V}$, where closures are meant in the respective Zariski topologies, see e.g.\ \cite[Lemma 2.4]{GJ}.} Since by construction $G$ and $H$ are not isomorphic, Theorem \ref{theorem:strongmeasurereconstructibility} implies that $G$ is not fully reconstructible, as desired.
\end{proof}
This theorem is similar in spirit to ~\cite[Theorem 5.21]{GJ}, which states that
if $G$ is strongly reconstructible in $\CC^d$ on at least $d+2$ vertices and without isolated vertices, then it cannot contain
an $\Rd$-bridge.
Example \ref{ex:6ring} in the next section shows that the converse of Theorem \ref{thm:fullyrecmconnected} is not true: the $6$-ring depicted in Figure \ref{figure:6ring} is not even strongly reconstructible in $\CC^3$, despite being $\mathcal{R}_3$-connected, $4$-connected and redundantly rigid in $\RR^3$.

We now turn our attention to Theorem \ref{theorem:fullyreconstructible}. As our argument is quite involved, we start by giving a brief outline of the proof. By Theorem \ref{theorem:strongmeasurereconstructibility}, to show that the globally rigid graph $G$ is fully reconstructible, we need to show that whenever $M_{d,G} =_\psi M_{d,H}$ for some graph $H$ without isolated vertices and some edge bijection $\psi: E(G) \rightarrow E(H)$, we have that $H$ is isomorphic to $G$ (and the isomorphism induces the appropriate edge bijection). Let $n$ and $n'$ denote the number of vertices of $G$ and $H$, respectively. If $n = n'$, then we are done by the strong reconstructibility of $G$ (Theorem \ref{thm:theranstronglyrec}). 
If $n' < n$, then $M_{d,H}$ must necessarily be of lower dimension than
$M_{d,G}$, which is impossible.
The only remaining possibility to be ruled out is that 
$n'>n$; note that in this case $M_{d,G} =_\psi M_{d,H}$ (and in particular the equality of dimensions) implies that $H$ is locally flexible in $\RR^d$. 

We rule this out as follows.
 From $M_{d,G} =_\psi M_{d,H}$ and $M_{d-1,H} \subseteq_\psi M_{d-1,G}$ (which follows from Corollary \ref{corollary:measurementvarietycontainment}) we get bounds on $k_d$ and $k_{d-1}$, the generic dimension of the space of infinitesimal motions of $H$ in $d$ and $d-1$ dimensions, respectively. Using Theorem \ref{theorem:mconnected}, we also get that $H$ is $\Rd$-connected, and in particular connected. The crux of our argument is Corollary \ref{corollary:dofbound}, which states that for a connected graph, $k_d - \binom{d+1}{2}$ (the ``$d$-dimensional degrees of freedom'' of the graph) is larger by a multiplicative factor than $k_{d-1} - \binom{d}{2}$, the $(d-1)$-dimensional degrees of freedom of the graph. Applying this to $H$, we shall get a contradiction with the bounds on $k_d$ and $k_{d-1}$ obtained previously.

Before proving Corollary \ref{corollary:dofbound}, we need a number of technical results. The first is about the structure of a certain variety, while the second gives a concrete example of an infinitesimal motion in $d$ dimensions that, generically, cannot be decomposed into two $(d-1)$-dimensional infinitesimal motions. The subsequent lemmas use this result to obtain bounds on the generic dimension of infinitesimal motions in $\CC^i$ for all $1 \leq i \leq d$.


\begin{lemma}\label{lemma:hyperplanecontainment}
Let $G = (V,E)$ be a graph on $n$ vertices.
\begin{enumerate}[label=\emph{\alph*)}]
\item The variety $M_{1,G} \subseteq \CC^E$ is not contained in any (linear) hyperplane in $\CC^E$.

\item  Consider the mapping $f : \CC^{2n} \rightarrow \CC^E$ defined by 
\begin{equation*}
p = \left(p_v^{(1)},p_v^{(2)}\right)_{v \in V} \longmapsto \left((p^{(1)}_v - p^{(1)}_u)(p^{(2)}_v - p^{(2)}_u)\right)_{uv \in E}.
\end{equation*}
Then $\overline{f(\CC^{2n})}$ (and consequently $f(\CC^{2n})$ itself) is not contained in  any (linear) hyperplane  in $\CC^E$.
\end{enumerate}
\end{lemma}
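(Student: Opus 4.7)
The plan is to reduce the statement to linear independence of a family of polynomials, and then to read off that linear independence from a single ``cross'' monomial per edge. A linear hyperplane $H \subseteq \CC^E$ is cut out by an equation $\sum_{uv \in E} c_{uv}\, y_{uv} = 0$ with $(c_{uv})_{uv \in E} \neq 0$, and $f(\CC^{2n}) \subseteq H$ holds if and only if the polynomial identity
\begin{equation*}
P_c(p) \;:=\; \sum_{uv \in E} c_{uv}\, \bigl(p^{(1)}_v - p^{(1)}_u\bigr)\bigl(p^{(2)}_v - p^{(2)}_u\bigr) \;\equiv\; 0
\end{equation*}
holds in the polynomial ring $\CC\bigl[p_v^{(k)} : v \in V,\ k \in \{1,2\}\bigr]$. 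Since hyperplanes are Zariski-closed, ruling this out also rules out containment of $\overline{f(\CC^{2n})}$ in any hyperplane. So it suffices to prove that the polynomials $f_{uv}(p) := (p^{(1)}_v - p^{(1)}_u)(p^{(2)}_v - p^{(2)}_u)$, indexed by $uv \in E$, are linearly independent.

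For this, I would expand
\begin{equation*}
f_{\{a,b\}} \;=\; p_a^{(1)} p_a^{(2)} \;-\; p_a^{(1)} p_b^{(2)} \;-\; p_b^{(1)} p_a^{(2)} \;+\; p_b^{(1)} p_b^{(2)},
\end{equation*}
and, for a fixed edge $\{u,v\} \in E$, isolate the coefficient of the monomial $p_u^{(1)} p_v^{(2)}$ in $P_c$. Because $G$ is simple, $u \neq v$, so this is a well-defined ``mixed'' monomial involving variables attached to two distinct vertices in two distinct coordinates. Scanning the expansion above, the monomial $p_u^{(1)} p_v^{(2)}$ appears in $f_{\{a,b\}}$ exactly when $\{a,b\} = \{u,v\}$, in which case its coefficient is $-1$. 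Thus the coefficient of $p_u^{(1)} p_v^{(2)}$ in $P_c$ equals $-c_{\{u,v\}}$, forcing $c_{\{u,v\}} = 0$. Running this over all edges of $G$ gives $(c_{uv}) = 0$, establishing linear independence. (The degenerate case $E = \varnothing$ is vacuous, as $\CC^E$ then admits no linear hyperplane.)

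I do not expect a genuine obstacle here: the argument is essentially monomial bookkeeping. The only point requiring any care is verifying that the cross monomial $p_u^{(1)} p_v^{(2)}$ singles out the edge $\{u,v\}$ uniquely, which reduces to the fact that $G$ is simple, so two distinct edges cannot share both endpoints, and thus cannot produce the same pair of ``mixed'' variables.
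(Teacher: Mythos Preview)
Your proof is correct and takes a genuinely different, more elementary route than the paper. The paper first identifies $f$ with $m_{2,G}\circ\alpha$ for a linear automorphism $\alpha$ of $\CC^{2n}$, so that $\overline{f(\CC^{2n})}=M_{2,G}$, and then argues that for any nonzero $\omega\in\CC^E$ one can find a generic framework $(G,p)$ with $\omega\notin S(G,p)$, whence by Lemma~\ref{lemma:spaceofstresses} the tangent space $T_{m_{2,G}(p)}M_{2,G}$ is not contained in $\omega^\perp$. Your argument bypasses all of this rigidity machinery: you note that containment in a hyperplane is equivalent to a linear relation $\sum c_{uv}f_{uv}\equiv 0$ among the polynomials $f_{uv}$, and you kill any such relation by reading off the coefficient of the cross monomial $p_u^{(1)}p_v^{(2)}$, which (since $G$ is simple) is supported on the single edge $\{u,v\}$. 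This is shorter and entirely self-contained; what it loses is the identification $\overline{f(\CC^{2n})}=M_{2,G}$, which the paper records along the way but does not actually need for the lemma itself.
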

\begin{proof}
\emph{a)} Let $H$ be an arbitrary hyperplane in $\CC^E$ whose orthogonal complement is generated by some non-zero $\omega \in \CC^E$. The configurations $p \in \CC^{n}$ for which $\omega$ is an equilibrium stress of $(G,p)$ form a proper linear subspace of $\CC^{n}$. In particular, we can find a generic framework $(G,p)$ in $\CC^1$ which does not have $\omega$ as an equilibrium stress. By Lemma \ref{lemma:spaceofstresses}, $\omega$ is not in the orthogonal complement of the tangent space of $M_{1,G}$ at $m_{1,G}(p)$. It follows that this tangent space is not contained in $H$, which implies that $M_{1,G}$ is not contained in $H$ either, as desired.  

\vspace{0.3em}
\emph{b)}
By direct calculation\footnote{This was already observed in \cite{lamannumber}.} we have that $f = m_{2,G} \circ \alpha$ where $\alpha : \CC^{2n} \rightarrow \CC^{2n}$ is defined by

\begin{equation*}
    p = \left(p^{(1)}_v,p^{(2)}_v\right)_{v \in V} \longmapsto \left(\frac{p^{(1)}_v + p^{(2)}_v}{2}, \frac{p^{(1)}_v - p^{(2)}_v}{2\sqrt{-1}} \right)_{v \in V}.
\end{equation*}

Since $\alpha$ is a linear automorphism of $\CC^{2n}$, this implies that $f(\CC^{2n}) = m_{2,G}(\CC^{2n})$, and thus $\overline{f(\CC^{2n})} = M_{2,G}$. Now by part \emph{a)}, $M_{1,G}$ is not contained in any linear hyperplane in $\CC^E$. Since $M_{1,G} \subseteq M_{2,G}$, the same holds for $M_{2,G}$. 
\end{proof}

For the next two lemmas, we introduce the following notation. Let $d \geq 3$ and let $G = (V,E)$ be a graph on $n$ vertices. For a framework $(G,p)$ in $\CC^d$, let $W_1(G,p) \leq \CC^{nd}$ denote the set of infinitesimal motions of $(G,p)$ that are supported on the first $d-1$ coordinates, that is, infinitesimal motions of the form $\left(q_v,0\right)_{v \in V}$, where $q_v \in \CC^{d-1}$ for each vertex $v$. Similarly, let $W_2(G,p)$ denote the set of infinitesimal motions that are supported on the last $d-1$ coordinates, and let $W(G,p)$ be the subspace of $\CC^{nd}$ spanned by $W_1(G,p) \cup W_2(G,p)$.

\begin{lemma}\label{lemma:infinitesimalrotation}
Let $d \geq 3$ and let $G = (V,E)$ be a graph on $n$ vertices that is not $\mathcal{R}_{d-2}$-independent. Then for any generic configuration 
$p = \left(p_v^{(1)},\dots,p^{(d)}_v\right)_{v \in V} \in \CC^{nd}$, the infinitesimal rotation $\varphi = \varphi(p)$ of $(G,p)$ defined by
\begin{equation*}
    \varphi_v = (- p_v^{(d)}, 0, \ldots, 0, p^{(1)}_v) \hspace{2em} \forall v \in V
\end{equation*}
is not in the subspace $W(G,p)$.
In particular, $W(G,p)$ is a proper subspace of the space of infinitesimal motions of $(G,p)$.
\end{lemma}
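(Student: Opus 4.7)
The plan is to translate the membership problem $\varphi \in W(G,p)$ into a linear system involving the $(d-2)$-dimensional rigidity matrix of the middle projection of $(G,p)$, and then to use Lemma \ref{lemma:hyperplanecontainment} to exploit the hypothesis that $G$ is not $M$-independent in $\RR^{d-2}$. First I would write $p_v = (p_v^{(1)}, \bar p_v, p_v^{(d)})$ with $\bar p_v \in \CC^{d-2}$ collecting the middle coordinates. If $\varphi = \psi_1 + \psi_2$ with $\psi_i \in W_i(G,p)$, then matching coordinates componentwise forces $\psi_1 = (-p_v^{(d)}, \beta_v, 0)_v$ and $\psi_2 = (0, -\beta_v, p_v^{(1)})_v$ for some $\beta_v \in \CC^{d-2}$. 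A direct computation then shows that the condition ``$\psi_1$ is an infinitesimal motion of $(G,p)$'' (equivalently, the condition on $\psi_2$) reduces, edge-by-edge, to
\begin{equation*}
(\bar p_u - \bar p_v)^T(\beta_u - \beta_v) = (p_u^{(1)} - p_v^{(1)})(p_u^{(d)} - p_v^{(d)}).
\end{equation*}

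Let $R(G, \bar p)$ denote the $(d-2)$-dimensional rigidity matrix of $(G, \bar p)$ and let $b \in \CC^E$ be the vector with $b_{uv} = (p_u^{(1)} - p_v^{(1)})(p_u^{(d)} - p_v^{(d)})$. The previous equation is precisely the linear system $R(G, \bar p)\,\beta = b$, so $\varphi \in W(G,p)$ if and only if $b$ lies in the column space of $R(G, \bar p)$; equivalently, taking orthogonal complements, $\omega \cdot b = 0$ for every equilibrium stress $\omega \in S(G, \bar p)$ of the middle projection. This reduction is the conceptual heart of the argument and explains why the hypothesis on $\RR^{d-2}$ enters.

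Now fix a generic $\bar p \in \CC^{n(d-2)}$. Since $G$ is not $M$-independent in $\RR^{d-2}$, we have $S(G, \bar p) \neq \{0\}$; choose any non-zero $\omega \in S(G, \bar p)$. Observe that $b$ is exactly $f(p^{(1)}, p^{(d)})$, where $f$ is the map from Lemma \ref{lemma:hyperplanecontainment}. That lemma asserts that $f(\CC^{2n})$ is contained in no hyperplane; in particular it is not contained in $\omega^\perp$, so we may choose $p^{(1)}, p^{(d)} \in \CC^n$ with $\omega \cdot b \neq 0$. For the resulting $p$, $b \notin \Span(R(G, \bar p))$ and hence $\varphi(p) \notin W(G,p)$.

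Finally, set $r_0 = r_{d-2}(G)$; the condition ``$\rank[R(G, \bar p) \mid b] \geq r_0 + 1$'' is expressed by the non-vanishing of some fixed $(r_0{+}1) \times (r_0{+}1)$ minor polynomial in the entries of $p$, and hence defines a Zariski-open subset $U \subseteq \CC^{nd}$; the previous paragraph shows that $U$ is non-empty. Whenever $p \in U$ and $\bar p$ attains the maximum rank $r_0$, we have $\varphi(p) \notin W(G,p)$. Since generic points are Zariski-dense in $\CC^{nd}$ (Lemma \ref{lemma:genericdense} with $X = Y = \CC^{nd}$), $U$ contains a generic configuration, which is the $p$ we seek. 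The concluding ``in particular'' is immediate, since $\varphi(p)$ is always an infinitesimal motion of $(G,p)$ (a direct check, as $\varphi$ is an infinitesimal rotation in the $(1,d)$-plane). The main obstacle I expect is establishing the algebraic reduction to $R(G, \bar p)\beta = b$ cleanly; once this is in place, Lemma \ref{lemma:hyperplanecontainment} provides precisely the non-degeneracy needed to complete the argument.
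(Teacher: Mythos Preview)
Your proposal is correct and follows essentially the same approach as the paper: both reduce the question to whether the vector $b$ with entries $(p_u^{(1)} - p_v^{(1)})(p_u^{(d)} - p_v^{(d)})$ lies in the column space of the $(d-2)$-dimensional rigidity matrix of the middle projection, then use a non-zero equilibrium stress $\omega$ together with Lemma~\ref{lemma:hyperplanecontainment} to rule this out. The only real difference is in establishing genericity: the paper fixes a generic middle projection $\tilde p$ and argues that generic extensions $(p^{(1)},p^{(d)})$ are dense in $\CC^{2n}$, whereas you phrase the condition $\rank[R(G,\bar p)\mid b]\geq r_0+1$ as a Zariski-open condition on all of $\CC^{nd}$ and then intersect with the generic locus. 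Both work; your version has the mild advantage that the openness is manifestly defined over $\QQ$, so the appeal to density of generic points is completely routine. (Note also that your rank condition already forces $\rank R(G,\bar p)=r_0$, so the extra clause ``and $\bar p$ attains the maximum rank'' is automatic.)
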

\begin{proof}
Consider an arbitrary realization $(G,p)$ in $\CC^d$ and let $\widetilde{p} \in \CC^{n(d-2)}$ denote the projection of $p$ onto the middle $d-2$ coordinate axes. Suppose that $\varphi(p)$ can be written as $\varphi(p) = q + r$, where $q \in W_1(G,p)$ and  $r \in W_2(G,p)$. Then $q_v = (- p_v^{(d)}, \widetilde{q}_v, 0)$ for every vertex $v \in V$, where $\widetilde{q} = (\widetilde{q}_v)_{v \in V} \in \CC^{n(d-2)}$. Since $q$ is an infinitesimal motion, we have that
\[\left(p_v - p_u\right) \cdot \left(q_v - q_u\right) = 0, \hspace{2em} \forall uv \in E.\]
Using the above description of $q$ and rearranging gives  
\begin{equation}\label{eq:1}
     (\widetilde{p}_v - \widetilde{p}_u)\cdot (\widetilde{q}_v - \widetilde{q}_u) = (p_v^{(1)} - p_u^{(1)})(p_v^{(d)} - p_u^{(d)}), \hspace{2em} \forall uv \in E.
\end{equation}

We shall show that for a generic realization $(G,p)$ of $G$ in $\CC^d$, (\ref{eq:1}) cannot hold for any vector $\left(\widetilde{q}_v\right)_{v \in V}$. We start by observing that $(G,\widetilde{p})$ is a generic framework in $\CC^{d-2}$, so $\rank(R(G,\widetilde{p}))$ is equal to the rank $r = r_{d-2}(G)$ of $G$, which is the maximal rank of the rigidity matrix of any framework in $\CC^{d-2}$. Note that 
\[\big((\widetilde{p}_v - \widetilde{p}_u)\cdot (\widetilde{q}_v - \widetilde{q}_u)\big)_{uv \in E} = R(G,\widetilde{p})\widetilde{q},\]
so it is sufficient to show that the vector $z = \left((p_v^{(1)} - p_u^{(1)})(p_v^{(d)} - p_u^{(d)}\right)_{uv \in E}$ is not contained in $\Span(R(G,\widetilde{p}))$. This is the same as saying $\rk(M(p)) = r + 1$, where $M(p)$ is obtained by appending the column vector $z$ to $R(G,\widetilde{p})$, which is further equivalent to the non-vanishing of some $(r+1) \times (r+1)$ subdeterminant of $M(p)$. The entries of $M(p)$ are polynomial functions in the coordinates of $(G,p)$ with rational coefficients, and consequently so are these $(r+1) \times (r+1)$ subdeterminants. 

Since $(G,p)$ is generic, it is sufficient to show that at least one of the polynomial functions describing these subdeterminants is not identically zero; in other words, that there is some framework $(G,p')$ for which the corresponding matrix $M(p')$ has rank $r+1$. In fact, we can find such a framework by only modifying the first and last coordinates of $p_v$ for each $v \in V$. Since $G$ is not $\mathcal{R}_{d-2}$-independent, $r < |E|$ and thus $\Span(R(G,\widetilde{p}))$ is contained in some linear hyperplane $H \subseteq \CC^E$. By part \emph{b)} of Lemma \ref{lemma:hyperplanecontainment}, we can find some vectors $(x_v)_{v \in V}$ and $(y_v)_{v \in V}$ such that the vector $\left((x_v - x_u)(y_v-y_u)\right)_{uv \in E}$ is not contained in $H$. It follows that the framework $(G,p')$ defined by $p'_v = (x_v,\widetilde{p}_v,q_v), v \in V$ satisfies $\rk(M(p')) = r+1$, as desired.
\end{proof}

\begin{lemma}\label{lemma:infinitesimalmotionbound}
Let $d \geq 3$ and let $G$ be a graph on $n$ vertices that is not $\mathcal{R}_{d-2}$-independent. For $i = 1,\ldots,d$, let $k_i$ denote the dimension of the space of infinitesimal motions of a generic realization of $G$ in $\CC^i$. Then for $i = 1,\ldots,d-2$, we have 
\[k_{i+2} - k_{i+1} \geq k_{i+1} - k_{i} + 1.\]
\end{lemma}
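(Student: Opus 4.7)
The plan is to apply Lemma \ref{lemma:infinitesimalrotation} at dimension $i+2$ for each $i \in \{1,\ldots,d-2\}$ and read off the inequality by a dimension count. To do so I first need the \emph{downward monotonicity} of $M$-dependence: if $G$ is not $M$-independent in $\mathbb{R}^{d-2}$, then $G$ is not $M$-independent in $\mathbb{R}^j$ for every $1 \leq j \leq d-2$. This is a one-line observation: take a generic realization $(G,p')$ in $\mathbb{C}^{n(d-2)}$ together with a non-zero stress $\omega$; projecting $p'$ onto its first $j$ coordinates yields a generic configuration in $\mathbb{C}^{nj}$, and the stress equations for $\omega$ in $\mathbb{C}^{d-2}$ restrict coordinate-wise to stress equations in $\mathbb{C}^{j}$, so $\omega$ is still a non-zero stress for the projected framework. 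In particular, Lemma \ref{lemma:infinitesimalrotation} will be applicable in each intermediate dimension $i+2$.

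Fix such an $i$ and let $(G,p)$ be a generic realization in $\mathbb{C}^{n(i+2)}$ as provided by Lemma \ref{lemma:infinitesimalrotation} (with the role of $d$ there played by $i+2$). Define $W_1, W_2 \subseteq \mathbb{C}^{n(i+2)}$ analogously to the lemma, namely as the infinitesimal motions of $(G,p)$ supported on the first $i+1$ and the last $i+1$ coordinates respectively. The lemma tells us that $W := W_1 + W_2$ is a proper subspace of the full infinitesimal motion space of $(G,p)$, which has dimension $k_{i+2}$. It remains to compute $\dim W_1$, $\dim W_2$, and $\dim(W_1 \cap W_2)$.

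A vector $(q_v,0)_{v \in V} \in \mathbb{C}^{n(i+2)}$ lies in $W_1$ precisely when $(\tilde p_u - \tilde p_v) \cdot (q_u - q_v) = 0$ for every edge $uv \in E$, where $\tilde p$ denotes the projection of $p$ onto its first $i+1$ coordinates. Since $\tilde p$ is generic in $\mathbb{C}^{n(i+1)}$, the space of such $q$'s is exactly the infinitesimal motion space of $(G,\tilde p)$ in $\mathbb{C}^{i+1}$, so $\dim W_1 = k_{i+1}$; by symmetry $\dim W_2 = k_{i+1}$. The intersection $W_1 \cap W_2$ consists of motions supported on the middle $i$ coordinates and, by the same argument, identifies with the infinitesimal motion space of the middle projection of $p$ (generic in $\mathbb{C}^{ni}$), giving $\dim(W_1 \cap W_2) = k_i$. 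By inclusion--exclusion, $\dim W = 2k_{i+1} - k_i$.

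Combining the two pieces, $k_{i+2} > \dim W = 2k_{i+1} - k_i$, which rearranges to the desired bound $k_i \geq 2k_{i+1} - k_{i+2} + 1$. I do not anticipate any serious obstacle: Lemma \ref{lemma:infinitesimalrotation} does the genuine work by excluding a specific infinitesimal rotation from $W$, and the rest is bookkeeping that identifies $W_1$, $W_2$, and their intersection with infinitesimal motion spaces of generic lower-dimensional projections of $p$, together with the routine downward monotonicity of $M$-dependence that ensures the lemma's hypothesis is available at each of the needed intermediate dimensions.
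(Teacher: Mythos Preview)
Your proposal is correct and follows essentially the same argument as the paper: both use the downward monotonicity of $M$-dependence to make Lemma~\ref{lemma:infinitesimalrotation} available at the relevant intermediate dimension, then read off the inequality from the identity $\dim(W_1\cap W_2)+\dim(W_1+W_2)=\dim W_1+\dim W_2$ combined with $\dim(W_1+W_2)\le k_{i+2}-1$. The only cosmetic difference is that the paper states the argument once for $i=d-2$ and invokes the reduction, while you carry out the same computation with $i+2$ in place of $d$ directly.
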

\begin{proof}
The assumption that $G$ is not $\mathcal{R}_{d-2}$-independent implies that it is not $\mathcal{R}_i$-independent for all $1 \leq i \leq d-2$. Thus, it is sufficient to prove for $i = d-2$. By Lemma \ref{lemma:infinitesimalrotation}, there is a generic realization $(G,p)$ in $\CC^d$ such that $W(G,p)$ 
is a proper subset of the space of infinitesimal motions of $(G,p)$. Note that $\dim(W_1(G,p)) = \dim(W_2(G,p)) = k_{d-1}$, and by the choice of $(G,p)$, we have $\dim(W(G,p)) \leq k_d - 1$. Moreover, the subspace $W' = W_1(G,p) \cap W_2(G,p)$ consists of the infinitesimal motions of $(G,p)$ that are supported on the middle $d-2$ coordinates. This implies $\dim(W') = k_{d-2}$. By basic linear algebra we have
\begin{equation*}
    \dim(W') + \dim(W(G,p)) = \dim(W_1(G,p)) + \dim(W_2(G,p)). 
\end{equation*}
Substituting the above equalities and inequality and then rearranging gives
\begin{equation*}
    k_{d} - k_{d-1} \geq k_{d-1} - k_{d-2} + 1,
\end{equation*}
as desired.
\end{proof}

If $G$ is $\mathcal{R}_{d-2}$-independent, then the conclusion of Lemma \ref{lemma:infinitesimalmotionbound} does not hold. In this case $G$ is $\mathcal{R}_{d-1}$-independent and $\Rd$-independent as well, so we have $k_i = ni - |E|$ for $d - 2 \leq i \leq d$, so that $k_{d} - k_{d-1} = k_{d-1} - k_{d-2}$.

The following combinatorial lemma lets us turn the recursive bound on $k_i$ given in Lemma \ref{lemma:infinitesimalmotionbound} into a lower bound that only depends on $k_d$ and $k_{d-1}$.

\begin{lemma}\label{lemma:inductivebound}
Let $d \geq 2$ be an integer and let $k_1,k_2,\ldots,k_d \in \ZZ$ be a sequence of integers with $k_d = \binom{d+1}{2} + x$ and $k_{d-1} = \binom{d}{2} + y$ for some $x,y \in \ZZ$. Suppose that for $1 \leq i \leq d-2$ we have $k_{i+2} - k_{i+1} \geq k_{i+1} - k_{i} + 1$. Then for $1 \leq i \leq d-1$ we have $k_i \geq \binom{i+1}{2} + (d-i)(y-x) + x$.
\end{lemma}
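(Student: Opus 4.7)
The plan is to prove this by straightforward downward induction on $i$, starting from $i = d-2$ and decreasing to $i = 1$. Denote the target lower bound by
\begin{equation*}
    f(i) := \binom{i+1}{2} + (d-i)(y-x) + x.
\end{equation*}
The first observation I would make is that this formula extends naturally to the ``boundary'' values $i = d-1$ and $i = d$: direct substitution gives $f(d-1) = \binom{d}{2} + y = k_{d-1}$ and $f(d) = \binom{d+1}{2} + x = k_d$, so the hypotheses already tell us $k_{d-1} \geq f(d-1)$ and $k_d \geq f(d)$ (with equality). This lets me handle the base case and the inductive step uniformly, because the base case $i = d-2$ is really just the first application of the recursion using the two ``known'' values.

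The key algebraic identity I would verify is
\begin{equation*}
    2 f(i+1) - f(i+2) + 1 = f(i) \qquad \text{for all } 1 \leq i \leq d-2.
\end{equation*}
The linear term $(d-i)(y-x) + x$ behaves correctly because $2(d-i-1) - (d-i-2) = d-i$, so the $(y-x)$ and $x$ contributions match on both sides. The only genuine calculation is the binomial identity
\begin{equation*}
    2\binom{i+2}{2} - \binom{i+3}{2} + 1 = \binom{i+1}{2},
\end{equation*}
which follows from expanding $(i+2)(i+1) - \tfrac{(i+3)(i+2)}{2} + 1 = \tfrac{(i+2)(i-1)}{2} + 1 = \tfrac{i(i+1)}{2}$.

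With this identity in hand, the inductive step is immediate: assuming $k_{i+1} \geq f(i+1)$ and $k_{i+2} \geq f(i+2)$ (both holding by the induction hypothesis, or as the given boundary data when $i = d-2$), the recursion yields
\begin{equation*}
    k_i \geq 2 k_{i+1} - k_{i+2} + 1 \geq 2 f(i+1) - f(i+2) + 1 = f(i).
\end{equation*}
Here one must be careful about the direction of the inequality when substituting the lower bound for $k_{i+2}$: since $k_{i+2}$ enters with a \emph{negative} coefficient, we need an upper bound on $k_{i+2}$, not a lower bound. However, this is harmless once we observe that the inductive hypothesis we are actually using at level $i+2$ is exactly the equality $f(i+2) = k_{i+2}$ when $i = d-2$ (boundary case), and for smaller $i$ we propagate the \emph{exact identity} $2f(i+1) - f(i+2) + 1 = f(i)$ rather than chaining inequalities.

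The cleanest way to avoid this pitfall is to prove by downward induction the stronger statement $k_i \geq f(i)$ \emph{together with} tracking that the right-hand side $f$ itself satisfies the recursion with equality; then one simply writes $k_i \geq 2 k_{i+1} - k_{i+2} + 1$, replaces $k_{i+1}$ by its lower bound $f(i+1)$, and uses the identity $f(i) = 2f(i+1) - f(i+2) + 1$ combined with the fact that the \emph{upper} bound on $k_{i+2}$ needed here is not actually required, because at each step the argument is launched from the two immediately succeeding indices, and whenever we invoke $k_{i+2}$ it is with the value $f(i+2)$ fixed by the previous step. This structural subtlety, rather than any genuine computation, is the only delicate point; once it is addressed the proof is a two-line induction.
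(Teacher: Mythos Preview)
Your proposal contains a genuine gap that you yourself identify but do not resolve. The inequality $k_i \geq 2k_{i+1} - k_{i+2} + 1$ has $k_{i+2}$ appearing with a negative sign, so the inductive hypothesis $k_{i+2} \geq f(i+2)$ points the wrong way: replacing $k_{i+2}$ by $f(i+2)$ would make the right-hand side \emph{larger}, not smaller. Your argument works for $i = d-2$ and $i = d-3$ because there $k_{i+2}$ equals $f(i+2)$ exactly (these are the boundary values $k_d$ and $k_{d-1}$), but from $i = d-4$ downward you only have a one-sided bound on $k_{i+2}$ and the chain breaks. The final paragraph asserts that ``the upper bound on $k_{i+2}$ needed here is not actually required'' and that ``whenever we invoke $k_{i+2}$ it is with the value $f(i+2)$ fixed by the previous step,'' but $k_{i+2}$ is a fixed integer, not something the previous step determines, and no justification is given.

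The paper's proof sidesteps this obstacle by a discrete concavity argument. Setting $l_i = k_i - \binom{i+1}{2}$, the recursion becomes $l_i \geq 2l_{i+1} - l_{i+2}$, i.e.\ $l_i - l_{i+1} \geq l_{i+1} - l_{i+2}$: the first differences are non-increasing in $i$. Since the terminal difference is $l_{d-1} - l_d = y - x$, every difference satisfies $l_i - l_{i+1} \geq y - x$, and summing from $i$ to $d-1$ gives $l_i \geq l_d + (d-i)(y-x) = x + (d-i)(y-x)$. This telescoping avoids ever needing an upper bound on any $k_j$, which is precisely the missing idea in your approach.
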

\begin{proof}
Consider the numbers $l_i = k_i - \binom{i+1}{2}$ for $i = 1,\ldots,d$. Our goal is to prove $l_i \geq (d-i)(y-x) + x$. By definition, we have $l_{d-1} - l_{d} = y - x$. We claim that $l_i - l_{i+1} \geq l_{i+1} - l_{i+2}$ holds for all $i = 1, \ldots, d-2$. Indeed, using the bound on $k_i$ we obtain
\begin{align*}
    l_i = k_i - \binom{i+1}{2} &\geq 2k_{i+1} - k_{i+2} + 1 - \binom{i+1}{2} \\ &= (2 \binom{i+2}{2} - \binom{i+3}{2} - \binom{i+1}{2} + 1) + (2l_{i+1} - l_{i+2}) \\
    &=2l_{i+1} - l_{i+2},
\end{align*}
where we used the fact that for all $a \geq 1$, $\binom{a}{2} + \binom{a+2}{2} = 2\binom{a+1}{2} + 1$.
This also gives
\begin{align*}
    l_i - l_{i+1} \geq l_{i+1} - l_{i+2} \geq l_{i+2} - l_{i+3} \geq \cdots \geq l_{d-1} - l_{d} = y-x.
\end{align*}

The statement now follows easily by induction on $j = d-i$. For $j=1$, we need $l_{d-1} \geq y$, which is actually satisfied with equality. Now let us assume $1 < j < d$. Then by induction we have 
\begin{align*}
    l_i \geq l_{i+1} + y - x \geq (d - i - 1)(y-x) + x + y - x = (d-i)(y-x) + x,
\end{align*}
as desired.
\end{proof}

If $G$ is a connected graph, then its one-dimensional generic realizations have a one-dimensional space of infinitesimal motions. For such graphs, combining Lemma \ref{lemma:infinitesimalmotionbound} with the bound on $k_1$ given by Lemma \ref{lemma:inductivebound} gives the following corollary.

\begin{corollary}\label{corollary:dofbound}
Let $d \geq 3$ be an integer and $G$ a connected graph, and suppose that $G$ is not $\mathcal{R}_{d-2}$-independent. Let $k_{d-1}$ and $k_d$ denote the dimension of the space of infinitesimal motions of a generic realization of $G$ in $\CC^{d-1}$ and in $\CC^d$, respectively. Finally, let $\dof_{d-1}(G) = k_{d-1} - \binom{d}{2}$ and $\dof_{d}(G) = k_d - \binom{d+1}{2}$ denote the ``degrees of freedom'' of $G$ in $d-1$ and $d$ dimensions, respectively. Then we have
\[\dof_d(G) \geq \frac{d-1}{d-2}\dof_{d-1}(G).\]
\end{corollary}

Now we are ready to prove our main theorem.

\begin{proof}[Proof of Theorem \ref{theorem:fullyreconstructible}]
The $d=2$ case follows from Theorem \ref{theorem:GJmain}, so we only prove for $d \geq 3$. Let $H$ be a graph on $n'$ vertices, without isolated vertices and such that $M_{d,G} =_\psi M_{d,H}$ under some edge bijection $\varpsi$. Then by Corollary \ref{corollary:measurementvarietycontainment}, we also have $M_{d-1,H} \subseteq_\psi M_{d-1,G}$. Observe that since $G$ is globally rigid, it is $\Rd$-connected by Theorem \ref{theorem:mconnected}, and thus so is $H$ by Theorem \ref{thm:matroidisomorphism}
and the equality of the measurement varieties. In particular, $H$ is a connected graph and it is not $\Rd$-independent
and thus not $\mathcal{R}_{d-2}$-independent.

Let $s = n'-n$ and let $k_i$ denote the dimension of the space of infinitesimal motions of a generic realization of $H$ in $\CC^i$ for $1 = 1,\ldots,d$. By considering the dimension of $M_{d,G}$ (using Lemma \ref{lemma:varietydimension}) and using the assumption that $G$ is (globally) rigid, we get $n'd - k_d = nd - \binom{d+1}{2}$, implying $k_d = sd + \binom{d+1}{2}$. Similarly, from $M_{d-1,H} \subseteq_\psi M_{d-1,G}$ we have $n'(d-1) - k_{d-1} \leq n(d-1) - \binom{d}{2},$ so that $k_{d-1} \geq s(d-1) + \binom{d}{2}$. Note that here we used the fact that 
if $G$ is (globally) rigid in $\RR^d$, then it is rigid in $\RR^{d-1}$, see e.g.\ \cite[Theorem 63.2.11]{JW}.

Since $k_d \geq \binom{d+1}{2}$, we must have $s \geq 0$. 
On the other hand, since $H$ is connected and not $\mathcal{R}_{d-2}$-independent, Corollary \ref{corollary:dofbound} implies that
\[sd = \dof_d(H) \geq \frac{d-1}{d-2}\dof_{d-1}(H) \geq \frac{d-1}{d-2}s(d-1).\]
Reordering, we get
\[sd(d-2) \geq s(d-1)^2,\] which is equivalent to $s \leq 0$.
It follows that $s = 0$, so $G$ and $H$ have the same number of vertices. By Theorem \ref{thm:theranstronglyrec}, $G$ is strongly reconstructible in $\CC^d$, so $\psi$ is induced by a graph isomorphism $\varphi : G \rightarrow H$, as desired.

\end{proof}


Applying Theorem \ref{theorem:fullyreconstructible} to a globally rigid subgraph of a graph, we obtain the following corollary.

\begin{corollary}\label{corollary:globallyrigidsubgraph}
Let $d \geq 2$ and let $(G,p)$ and $(H,q)$ be generic frameworks in $\CC^d$ that are length-equivalent under the edge bijection $\psi$. Let $G_0 = (V_0,E_0)$ be a globally rigid subgraph of $G = (V,E)$ and let $H_0$ denote the subgraph of $H$ induced by $\psi(E_0)$. Then $\left.\psi\right|_{E_0}$ is induced by an isomorphism $\varphi : V(G_0) \rightarrow V(H_0)$ and the frameworks $(G_0,\left.p\right|_{V_0})$ and $(H_0,\left.q\right|_{V(H_0)} \circ \varphi)$ are congruent.
\end{corollary}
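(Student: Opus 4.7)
The plan is to derive the corollary from Theorem \ref{theorem:fullyreconstructible} applied to the sub-frameworks induced on $G_0$ and $H_0$, and then to use global rigidity of $G_0$ to upgrade the resulting graph isomorphism to a congruence.

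First I would set up the appropriate restrictions. Let $p_0 = \left.p\right|_{V_0}$ and $q_0 = \left.q\right|_{V(H_0)}$, so that $(G_0, p_0)$ and $(H_0, q_0)$ are the sub-frameworks in question. Since $(G,p)$ and $(H,q)$ are generic, the restrictions $p_0$ and $q_0$ are themselves generic, because their coordinates form subsets of algebraically independent collections over $\QQ$. The length-equivalence of $(G,p)$ and $(H,q)$ under $\psi$ restricts directly to a length-equivalence of $(G_0, p_0)$ and $(H_0, q_0)$ under $\left.\psi\right|_{E_0}$, because the length of any edge depends only on the configuration of its two endpoints. Finally, because $H_0$ is defined as the subgraph of $H$ induced by the edge set $\psi(E_0)$, every vertex of $H_0$ is an endpoint of some edge of $H_0$, so $H_0$ has no isolated vertices.

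Next I would invoke Theorem \ref{theorem:fullyreconstructible}: assuming $|V_0| \geq d+2$ (the smaller cases, in which $G_0$ must be a complete graph, would need separate, more elementary handling), the globally rigid graph $G_0$ is fully reconstructible in $\CC^d$. Applied to the generic length-equivalent pair $(G_0, p_0)$ and $(H_0, q_0)$ identified above, the definition of full reconstructibility produces the desired isomorphism $\varphi : V(G_0) \rightarrow V(H_0)$ satisfying $\left.\psi\right|_{E_0}(uv) = \varphi(u)\varphi(v)$ for every $uv \in E_0$, which gives the first half of the conclusion.

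For the congruence, I would consider the framework $(G_0, q_0 \circ \varphi)$ obtained by transporting $(H_0, q_0)$ back to the vertex set $V_0$ via $\varphi^{-1}$. The defining property of $\varphi$, combined with the length-equivalence of $(G_0, p_0)$ and $(H_0, q_0)$, ensures that $(G_0, p_0)$ and $(G_0, q_0 \circ \varphi)$ assign the same length to every edge $uv \in E_0$, so they are equivalent realizations of $G_0$. Since $G_0$ is globally rigid in $\CC^d$ by Theorem \ref{theorem:complexrigidity} and $p_0$ is generic, the two realizations must be congruent. I do not anticipate any serious obstacle beyond the routine verification in the first paragraph that the sub-frameworks satisfy the hypotheses of Theorem \ref{theorem:fullyreconstructible}; the substantive ingredient is precisely that new full reconstructibility theorem.
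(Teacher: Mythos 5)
Your proposal is correct and follows exactly the route the paper intends: the paper gives no written proof, merely remarking that the corollary is obtained by ``applying Theorem~\ref{theorem:fullyreconstructible} to a globally rigid subgraph,'' which is precisely your argument of restricting the generic length-equivalent pair to $(G_0,p_0)$ and $(H_0,q_0)$, invoking full reconstructibility for the isomorphism, and then using generic global rigidity of $G_0$ in $\CC^d$ for the congruence. You are also right to flag the implicit hypothesis $|V_0|\geq d+2$ inherited from Theorem~\ref{theorem:fullyreconstructible} (indeed, for $G_0=K_{d+1}$ the conclusion can fail, e.g.\ a triangle's lengths realized by a generic path when $d=2$), a point the paper's statement glosses over.
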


The $d = 2$ case of Corollary \ref{corollary:globallyrigidsubgraph} can be found in \cite[Corollary 5.2]{GJ}. 

To close this section, let us briefly return to Lemma \ref{lemma:infinitesimalmotionbound}.
Let $G$ be a graph and, as before, let $k_d$ denote the dimension of the set of infinitesimal motions of a generic realization of $G$ in $\CC^d$. Using the fact that for a graph $G$ on at least $d+2$ vertices we have $r_d(G) = nd - k_d$, we can rephrase the $i = d-2$ case of Lemma \ref{lemma:infinitesimalmotionbound} in the following way.
\begin{corollary}\label{corollary:dimensionbound}
Let $d \geq 3$ and let $G$ be a graph that is not $\mathcal{R}_{d-2}$-independent. Then the rank of $G$ satisfies
\begin{equation}\label{eq:dimension}
    r_{d}(G) - r_{d-1}(G) \leq r_{d-1}(G) - r_{d-2}(G) - 1.
\end{equation}
\end{corollary}
This bound is the best possible in the sense that if $G$ is rigid in $\RR^{d}$, then (\ref{eq:dimension}) is satisfied with equality. We note that Corollary \ref{corollary:dimensionbound} can be interpreted as a statement about the so-called secant defect of $M_{1,G}$, similar to Zak's theorem on superadditivity \cite{fantechi,zak}; see \cite[Section 4.3]{GHT} for a related discussion.

\section{Examples and open questions}\label{section:examples}

In this section, we examine various examples related to $\Rd$-connected and $\Rd$-separable graphs, as well as the unlabeled reconstruction problem. 

\subsection{New examples of \texorpdfstring{$H$}{H}-graphs}

Following \cite{JKT}, we say that a graph $G$ is an {\it $H$-graph} in $\RR^d$ if it is 
$(d+1)$-connected and redundantly rigid in $\RR^d$ (i.e.\ it satisfies the
necessary conditions of Theorem \ref{theorem:hendrickson}), but it is not globally rigid in $\RR^d$.
There are no $H$-graphs for $d=1,2$ but for
$d\geq 3$ they exist and finding more examples may lead to a better understanding of
higher dimensional global rigidity. For a long time, the complete bipartite graph $K_{5,5}$ was the only 
known $H$-graph in $\RR^3$ (identified in \cite{con2}), until infinite families had been found
in \cite{JKT}.

Theorem \ref{theorem:mconnected} can be used to give new examples of $H$-graphs which are $\mathcal{R}_3$-separable. These also demonstrate that redundant rigidity and $(d+1)$-connectivity together do not imply $\Rd$-connectivity in the $d \geq 3$ case.

\begin{example}\label{example:hgraph}
Consider the construction illustrated in Figure \ref{figure:mseparable2}(a).
It is easy to see that the graph $G$ in the figure is $4$-connected.

\begin{claim*}
$G$ is redundantly rigid and $\mathcal{R}_3$-separable.
\end{claim*}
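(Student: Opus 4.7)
The plan is to exhibit an explicit edge partition of $G$ witnessing $M$-separability in $\RR^3$, and then to harvest both $M$-separability and redundant rigidity from it. Based on the construction in Figure~\ref{figure:mseparable2}(a) and drawing on the analogy with Figure~\ref{figure:mseparable1}, I would decompose $E(G)$ as $E=E_1\cup E_2$, where each subgraph $G_i=(V(G),E_i)$ is built from a small number of copies of $K_5$ (each of which is a $3$-dimensional $M$-circuit, since $r_3(K_5)=9=|E(K_5)|-1$) glued along enough shared vertices that each $G_i$ is rigid in $\RR^3$.

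By the standard gluing lemma in $\RR^3$ (the union of two rigid graphs sharing at least three non-collinear vertices is rigid), each $G_i$ will be rigid, so Gluck's theorem (Theorem~\ref{thm:gluck}) gives $r_3(G_i)=3|V(G_i)|-6$. The crucial step is to verify the rank identity $r_3(G)=r_3(G_1)+r_3(G_2)$. I would do this by picking a generic realization $(G,p)$ in $\CC^3$ and checking that every equilibrium stress of $(G,p)$ decomposes as a direct sum of stresses of $(G_1,p)$ and $(G_2,p)$ under the identification $\CC^E=\CC^{E_1}\times\CC^{E_2}$, i.e.\ $S(G,p)=S(G_1,p)\oplus S(G_2,p)$. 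Each $K_5$-piece carries a one-dimensional stress space supported on its ten edges, and the shared vertices between $G_1$ and $G_2$ should be arranged (by the design of the construction) so that no stress of $(G,p)$ can be non-zero on edges crossing the $E_1$/$E_2$ boundary. By Lemma~\ref{lemma:spaceofstresses}, such a stress decomposition is equivalent to the desired rank identity.

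Once the rank identity is in hand, $M$-separability follows by definition, and the rigidity of $G$ follows from $r_3(G)=r_3(G_1)+r_3(G_2)=3|V(G)|-6$ together with Theorem~\ref{thm:gluck}. For redundant rigidity, I would invoke the matroidal decomposition associated to the edge partition: an edge $e\in E_i$ is an $M$-bridge of $G$ if and only if it is an $M$-bridge of $G_i$. Since every edge of $G_i$ lies in some $K_5$-piece, which is itself an $M$-circuit, no edge of $G_i$ is an $M$-bridge. Therefore $G$ has no $M$-bridges, and a rigid graph without $M$-bridges is redundantly rigid.

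The main obstacle is the rigorous certification of the rank identity $r_3(G)=r_3(G_1)+r_3(G_2)$, since rank computations in the three-dimensional rigidity matroid are notoriously subtle. The cleanest route is the stress-decomposition argument sketched above: write down explicit generators of each $S(G_i,p)$ coming from the local $K_5$-stresses and verify their linear independence in $\CC^E$ for a suitably chosen generic $p$. The structural features of Figure~\ref{figure:mseparable2}(a) should make this verification transparent, as the construction is designed precisely so that the $K_5$-stresses of $G_1$ and $G_2$ live on disjoint coordinate regions of $\CC^E$.
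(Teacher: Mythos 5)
There is a genuine gap, and it is structural rather than a matter of missing detail. Your plan decomposes $E$ into two parts $E_1,E_2$ whose induced subgraphs are each \emph{rigid} and share at least three vertices, and you then want both the rank identity $r_3(G)=r_3(G_1)+r_3(G_2)$ and the rigidity of $G$ to fall out. But this is arithmetically impossible: if $G_1$ and $G_2$ are rigid and share $k\geq 3$ vertices, then $r_3(G_1)+r_3(G_2)=3(|V(G_1)|+|V(G_2)|)-12=3(|V(G)|+k)-12\geq 3|V(G)|-3>3|V(G)|-\binom{4}{2}\geq r_3(G)$, so the separation condition can never hold. The actual separation in the paper is between the inner $K_5$ (rank $9$) and the outer ring $G^o$ of $K_5$'s, and the whole point is that $G^o$ is \emph{not} rigid: it spans $36$ vertices but has rank only $96=3\cdot 36-6-6$, i.e.\ six degrees of freedom (this is exactly why the graph is tight for Theorem~\ref{count}). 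So the "glue rigid pieces along three vertices" picture misdescribes the example, and no choice of two rigid pieces sharing $\geq 3$ vertices can witness $M$-separability of any graph.

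The second gap is that you never actually prove $G$ is rigid; you try to deduce it from the rank identity, which is circular (the identity is only useful once you know $r_3(G)=105$) and, with your numbers, false as noted above. In the paper this is the substantive work: a spanning subgraph of $G$ is reduced to $K_4$ by inverse $0$-extensions, inverse $1$-extensions and edge contractions (inverse vertex splits), giving $r_3(G)=105$ directly. The $M$-separability then follows from an upper bound, not a stress computation: deleting one edge from each outer $K_5$ does not lower the rank (each $K_5$ is an $M$-circuit), so $r_3(G^o)\leq 96$, and submodularity forces $105=r_3(G)\leq r_3(G^o)+r_3(K_5)\leq 96+9=105$, hence equality. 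Your stress-decomposition route would require already knowing $\dim S(G,p)$, i.e.\ $r_3(G)$, so it does not avoid the reduction argument. Your redundant-rigidity step (every edge lies in a $K_5$, which is an $M$-circuit, so no edge is an $M$-bridge of a rigid graph) is fine and matches the paper, but it only works once rigidity is established.
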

\begin{proof}
We show that $G$ is rigid by showing that a spanning subgraph of $G$ can be reduced to $K_4$ by a sequence of
the following operations: (i) deletion of a vertex of degree at least three, (ii) deletion of
a vertex $v$ of degree four and the addition of a new edge between two neighbours of $v$, (iii)
the contraction of an edge $uv$ for which $u$ and $v$ have exactly two common neighbours.
It is well-known that the inverse operations ($0$- and $1$-extension and vertex splitting)
preserve rigidity in $\RR^3$, see e.g.\ \cite{Whlong}. Thus, since $K_4$ is rigid, it will follow that $G$ is also rigid.
  
First, delete the nine vertices of degree four from $G$ and then delete one edge from each of the
remaining copies of $K_5$. The resulting graph has $28$ vertices and $78$ edges. We shall reduce it to its
internal $K_4$ subgraph. By the symmetry of the graph we
can perform the reduction steps in groups of four in a symmetric way. First we contract four edges of the outer ring that do not
belong to the four copies of $K_5-e$, one from each ``corner''. These operations create a vertex of degree four
in each corner, so we can apply operation (ii) at each of them to obtain a graph on $20$ vertices
and $54$ edges, in which the four edges added form a four-cycle. After that we again apply operation (ii)
in three rounds, decreasing the number of vertices by four in each round. If the added edges are
chosen appropriately, we obtain a graph on $8$ vertices, consisting of the internal $K_4$, a disjoint four-cycle $C_4$, and
eight more edges that connect them (so that they span an $8$-cycle). From here we apply operations (ii), (i), (ii), and then again (i) to get the $K_4$ subgraph.

Thus $G$ is indeed rigid, that is, $r_3(G)=3|V(G)|-6=105$.
Note that $G$ is also redundantly rigid, because every edge of $G$ belongs to a $K_5$ subgraph.
To see that $G$ is $\mathcal{R}_3$-separable, first observe that if we remove one edge from each copy of $K_5$ in the outer ring $G^o$ (say, one edge incident with
each vertex of degree four) then we do not decrease its rank and
obtain a spanning subgraph of $G^o$ with $96$ edges. Thus $r_3(G^o)\leq 96$. The inner $K_5$ has rank $9$.
Hence we must have $r_3(G)=r_3(G^o)+r_3(K_5)$, showing that $G$ is indeed $\mathcal{R}_3$-separable.
\end{proof}
\vspace{-0.5em}
Since $\mathcal{R}_3$-separable graphs are not globally rigid in $\RR^3$ by Theorem \ref{theorem:mconnected}, $G$ is indeed an $H$-graph in $\RR^3$. We can obtain an infinite family of $H$-graphs in $\RR^3$ from Example \ref{example:hgraph} by replacing the inner $K_5$ in Figure \ref{figure:mseparable2} with another $4$-connected redundantly rigid graph $K'$ in $\RR^3$ on at least five vertices (as in Figure \ref{figure:mseparable2}(b), where $K'=K_6-e$).

\end{example}

\begin{figure}[ht!]
        \begin{subfigure}{.49\linewidth}
        \centering
        \begin{tikzpicture}[x = 1cm, y = 1cm, scale = 4]
            \node[draw=none,rotate=0,minimum size=5cm,regular polygon,regular polygon sides=12] (inner12) {};
            \node[draw=none,rotate=0,minimum size=7cm,regular polygon,regular polygon sides=12] (outer12) {};
            \node[draw=none,rotate=45,minimum size=3.7cm,regular polygon,regular polygon sides=4] (inner4) {};
            \node[draw=none,rotate=45,minimum size=8.2cm,regular polygon,regular polygon sides=12] (outer8) {};
            \node (o) at (-0.14,-0.11){};
            
            \foreach \x in {1,2,...,12}
            {\pgfmathtruncatemacro\y{Mod(\x,12) + 1}
            \draw [line width=\normaledge, color=edgeblack] (outer12.corner \x) -- (outer12.corner \y);
            \draw [line width=\normaledge, color=edgeblack] (inner12.corner \x) -- (inner12.corner \y);
            \draw [line width=\normaledge, color=edgeblack] (inner12.corner \x) -- (outer12.corner \y);
            \draw [line width=\normaledge, color=edgeblack] (inner12.corner \y) -- (outer12.corner \x);
            \draw [line width=\normaledge, color=edgeblack] (inner12.corner \x) -- (outer12.corner \x);}
            
            \foreach \x in {1,2,...,4}
            {\draw [line width=\normaledge, color=edgeblack] (inner4.corner \x) -- (o);
            \pgfmathtruncatemacro\y{Mod(\x,4) + 1}
            \draw [line width=\normaledge, color=edgeblack] (inner4.corner \x) -- (inner4.corner \y);
            \pgfmathtruncatemacro\y{Mod(\x + 1,4) + 1}
            \draw [line width=\normaledge, color=edgeblack] (inner4.corner \x) -- (inner4.corner \y);
            \pgfmathtruncatemacro\y{Mod(3*\x - 2,12) + 1}
            \draw [line width=\normaledge, color=edgeblack] (inner4.corner \x) -- (inner12.corner \y);
            \draw [line width=\normaledge, color=edgeblack] (inner4.corner \x) -- (outer12.corner \y);
            \pgfmathtruncatemacro\y{Mod(3*\x - 3,12) + 1}
            \draw [line width=\normaledge, color=edgeblack] (inner4.corner \x) -- (inner12.corner \y);
            \draw [line width=\normaledge, color=edgeblack] (inner4.corner \x) -- (outer12.corner \y);
            \pgfmathtruncatemacro\z{3*\x - 1}
            \pgfmathtruncatemacro\y{Mod(3*\x,12) + 1}
            \draw [line width=\normaledge, color=edgeblack] (outer8.corner \z) -- (outer12.corner \y);
            \draw [line width=\normaledge, color=edgeblack] (outer8.corner \z) -- (inner12.corner \y);
            \pgfmathtruncatemacro\y{Mod(3*\x - 1,12) + 1}
            \draw [line width=\normaledge, color=edgeblack] (outer8.corner \z) -- (outer12.corner \y);
            \draw [line width=\normaledge, color=edgeblack] (outer8.corner \z) -- (inner12.corner \y);
            \pgfmathtruncatemacro\z{3*\x - 2}
            \pgfmathtruncatemacro\y{Mod(3*\x - 2,12) + 1}
            \draw [line width=\normaledge, color=edgeblack] (outer8.corner \z) -- (outer12.corner \y);
            \draw [line width=\normaledge, color=edgeblack] (outer8.corner \z) -- (inner12.corner \y);
            \pgfmathtruncatemacro\y{Mod(3*\x - 1,12) + 1}
            \draw [line width=\normaledge, color=edgeblack] (outer8.corner \z) -- (outer12.corner \y);
            \draw [line width=\normaledge, color=edgeblack] (outer8.corner \z) -- (inner12.corner \y);}

            \foreach \x in {1,2,...,12}
            \draw [fill=vertexblack] (inner12.corner \x) circle (1pt);
        
            \foreach \x in {1,2,...,12}
            \draw [fill=vertexblack] (outer12.corner \x) circle (1pt);

            \foreach \x in {1,2,...,4}
            \draw [fill=vertexblack] (inner4.corner \x) circle (1pt);

            \foreach \x in {1,2,...,4}
            {\pgfmathtruncatemacro\z{3*\x - 1}
            \draw [fill=vertexblack] (outer8.corner \z) circle (1pt);
            \pgfmathtruncatemacro\z{3*\x - 2}
            \draw [fill=vertexblack] (outer8.corner \z) circle (1pt);}

            \draw [fill=vertexblack] (o) circle (1pt);
        \end{tikzpicture}
        \caption{}
        \end{subfigure}
        \begin{subfigure}{.49\linewidth}
        \centering
        \begin{tikzpicture}[x = 1cm, y = 1cm, scale = 4]
            \node[draw=none,rotate=0,minimum size=5cm,regular polygon,regular polygon sides=12] (inner12) {};
            \node[draw=none,rotate=0,minimum size=7cm,regular polygon,regular polygon sides=12] (outer12) {};
            \node[draw=none,rotate=45,minimum size=3.7cm,regular polygon,regular polygon sides=4] (inner4) {};
            \node[draw=none,rotate=45,minimum size=8.2cm,regular polygon,regular polygon sides=12] (outer8) {};
            \node (o) at (-0.14,-0.11){};
            \node (o2) at (0.14,-0.11){};
            
            \foreach \x in {1,2,...,12}
            {\pgfmathtruncatemacro\y{Mod(\x,12) + 1}
            \draw [line width=\normaledge, color=edgeblack] (outer12.corner \x) -- (outer12.corner \y);
            \draw [line width=\normaledge, color=edgeblack] (inner12.corner \x) -- (inner12.corner \y);
            \draw [line width=\normaledge, color=edgeblack] (inner12.corner \x) -- (outer12.corner \y);
            \draw [line width=\normaledge, color=edgeblack] (inner12.corner \y) -- (outer12.corner \x);
            \draw [line width=\normaledge, color=edgeblack] (inner12.corner \x) -- (outer12.corner \x);}
            
            \foreach \x in {1,2,...,4}
            {\draw [line width=\normaledge, color=edgeblack] (inner4.corner \x) -- (o);
            \draw [line width=\normaledge, color=edgeblack] (inner4.corner \x) -- (o2);
            \pgfmathtruncatemacro\y{Mod(\x,4) + 1}
            \draw [line width=\normaledge, color=edgeblack] (inner4.corner \x) -- (inner4.corner \y);
            \pgfmathtruncatemacro\y{Mod(\x + 1,4) + 1}
            \draw [line width=\normaledge, color=edgeblack] (inner4.corner \x) -- (inner4.corner \y);
            \pgfmathtruncatemacro\y{Mod(3*\x - 2,12) + 1}
            \draw [line width=\normaledge, color=edgeblack] (inner4.corner \x) -- (inner12.corner \y);
            \draw [line width=\normaledge, color=edgeblack] (inner4.corner \x) -- (outer12.corner \y);
            \pgfmathtruncatemacro\y{Mod(3*\x - 3,12) + 1}
            \draw [line width=\normaledge, color=edgeblack] (inner4.corner \x) -- (inner12.corner \y);
            \draw [line width=\normaledge, color=edgeblack] (inner4.corner \x) -- (outer12.corner \y);
            \pgfmathtruncatemacro\z{3*\x - 1}
            \pgfmathtruncatemacro\y{Mod(3*\x,12) + 1}
            \draw [line width=\normaledge, color=edgeblack] (outer8.corner \z) -- (outer12.corner \y);
            \draw [line width=\normaledge, color=edgeblack] (outer8.corner \z) -- (inner12.corner \y);
            \pgfmathtruncatemacro\y{Mod(3*\x - 1,12) + 1}
            \draw [line width=\normaledge, color=edgeblack] (outer8.corner \z) -- (outer12.corner \y);
            \draw [line width=\normaledge, color=edgeblack] (outer8.corner \z) -- (inner12.corner \y);
            \pgfmathtruncatemacro\z{3*\x - 2}
            \pgfmathtruncatemacro\y{Mod(3*\x - 2,12) + 1}
            \draw [line width=\normaledge, color=edgeblack] (outer8.corner \z) -- (outer12.corner \y);
            \draw [line width=\normaledge, color=edgeblack] (outer8.corner \z) -- (inner12.corner \y);
            \pgfmathtruncatemacro\y{Mod(3*\x - 1,12) + 1}
            \draw [line width=\normaledge, color=edgeblack] (outer8.corner \z) -- (outer12.corner \y);
            \draw [line width=\normaledge, color=edgeblack] (outer8.corner \z) -- (inner12.corner \y);}

            \foreach \x in {1,2,...,12}
            \draw [fill=vertexblack] (inner12.corner \x) circle (1pt);
        
            \foreach \x in {1,2,...,12}
            \draw [fill=vertexblack] (outer12.corner \x) circle (1pt);

            \foreach \x in {1,2,...,4}
            \draw [fill=vertexblack] (inner4.corner \x) circle (1pt);

            \foreach \x in {1,2,...,4}
            {\pgfmathtruncatemacro\z{3*\x - 1}
            \draw [fill=vertexblack] (outer8.corner \z) circle (1pt);
            \pgfmathtruncatemacro\z{3*\x - 2}
            \draw [fill=vertexblack] (outer8.corner \z) circle (1pt);}

            \draw [fill=vertexblack] (o) circle (1pt);
            \draw [fill=vertexblack] (o2) circle (1pt);
        \end{tikzpicture}
        \caption{}
        \end{subfigure}
    \caption{Graphs that are $4$-connected, redundantly rigid and $M$-separable in $\RR^3$. 
    The graph shown in (a) satisfies $r_3(G)= 105 = 96 + 9 =r_3(G^o)+r_3(K_5)$, where
$G^o$ is the outer ring of $K_5$'s.}
    \label{figure:mseparable2}
\end{figure}
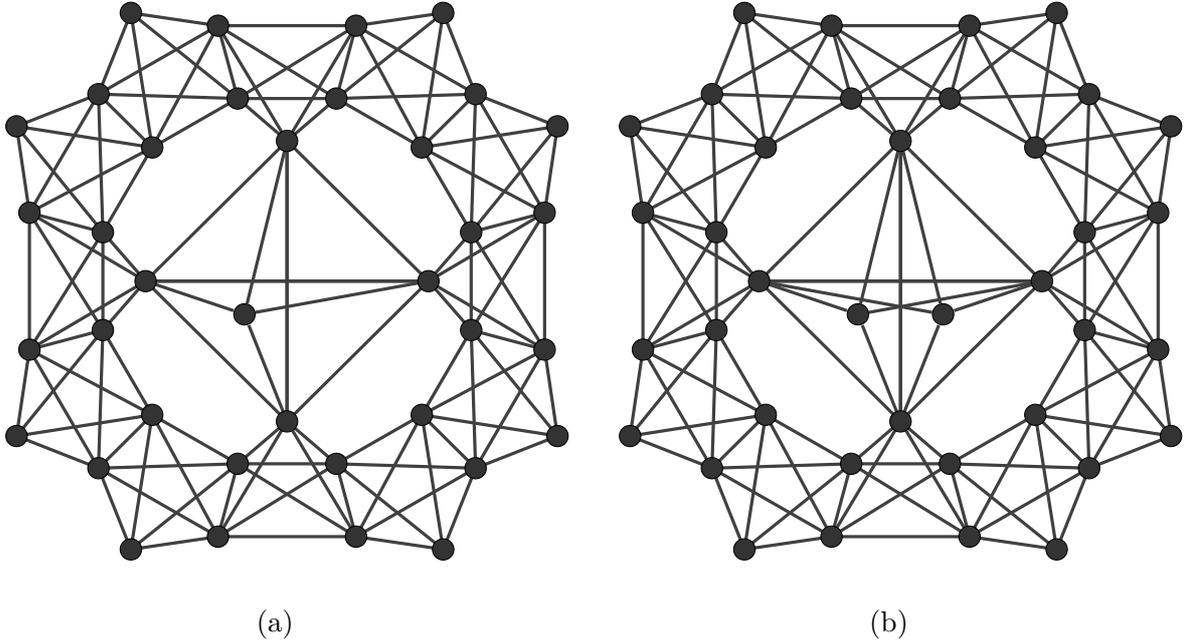

We note that some of the $H$-graphs obtained in \cite{JKT} (for example, the ``$6$-ring'' depicted in Figure \ref{figure:6ring}) show that $4$-connectivity, redundant rigidity,
and $\mathcal{R}_3$-connectivity together do not imply global rigidity in $\RR^3$.

It is also interesting to note that every known $H$-graph in $\RR^3$, except $K_{5,5}$, has a $4$-separator.

\begin{question}\label{question:K55}
Is every $5$-connected and redundantly rigid graph, other than $K_{5,5}$, globally rigid in $\RR^3$?
\end{question}

The following related question also seems to be open.

\begin{question}
Is every $5$-connected and redundantly rigid graph $\mathcal{R}_3$-connected?
\end{question}


It is easy to see that coning increases vertex connectivity by one, so that the cone of a $(d+1)$-connected graph is $(d+2)$-connected. It is also folklore (and follows from Theorem \ref{thm:coninggeneric}(a) and Lemma \ref{lemma:mcircuitcone} in the next section) that if $G$ is redundantly rigid in $\RR^d$, then its cone graph $G^v$ is redundantly rigid in $\RR^{d+1}$. Together with Theorem \ref{thm:coninggeneric}(b) these imply that the cone of a $H$-graph in $\RR^d$ is a $H$-graph in $\RR^{d+1}$.   
We can use this fact to construct further families of $H$-graphs.
However, these graphs will not be $\mathcal{R}_{d+1}$-separable (see Theorem \ref{lemma:conemconnected}).
Instead, we can use higher dimensional body-hinge graphs \cite{JKT} to generalize the $\mathcal{R}_{3}$-separable
construction of Figure \ref{figure:mseparable2} to $d\geq 4$. We omit the details.

\subsection{Unlabeled reconstructibility and small separators}

In \cite[Question 7.2]{GTT} the authors asked whether every graph $G$ that is $3$-connected and redundantly rigid in $\RR^d$ is determined by its measurement variety; that is, whether  $M_{d,G} =_\psi M_{d,H}$ under some edge bijection $\psi$ implies that $G$ and $H$ are isomorphic (note that here we do not require that the isomorphism induces $\psi$). Such a graph was called ``weakly reconstructible in $\CC^d$'' in \cite{GJ}.
In the other direction, in \cite[Section 7]{GJ}, the authors asked whether every graph on at least $d+2$ vertices that is strongly reconstructible in $\CC^d$ for some $d \geq 3$ is globally rigid in $\RR^d$, or (more weakly) whether it is $(d+1)$-connected.

In this subsection we provide negative answers to each of these questions for $d\geq 3$. 
Throughout this section we shall use the following (folklore) result which also appears in the proof of \cite[Theorem 5.21]{GJ}. 

\begin{lemma}\label{lemma:sameedgemap}
Let $G$ and $H$ be graphs on at least three vertices with $G$ connected and let $\varphi_1, \varphi_2: V(G) \rightarrow V(H)$ be injective graph homomorphisms. Suppose that $\varphi_1$ and $\varphi_2$ induce the same edge map $\varpsi: E(G) \rightarrow E(H)$. Then $\varphi_1(v) = \varphi_2(v)$ for all $v \in V(G)$.
\end{lemma}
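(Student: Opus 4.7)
The plan is to exploit the combinatorial dichotomy forced at each edge: for every edge $uv \in E(G)$, the unordered pair $\{\varphi_1(u),\varphi_1(v)\}$ equals $\{\varphi_2(u),\varphi_2(v)\}$ (both equal $\varpsi(uv)$ as an edge of $H$), so either both endpoints \emph{agree} (meaning $\varphi_1(u)=\varphi_2(u)$ and $\varphi_1(v)=\varphi_2(v)$) or both \emph{swap} (meaning $\varphi_1(u)=\varphi_2(v)$ and $\varphi_1(v)=\varphi_2(u)$). The strategy is to first locate one vertex on which the two homomorphisms agree, and then spread the agreement along $G$ using connectivity.

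First I would show that at most one edge incident to any vertex can swap. Indeed, if $v$ has two distinct neighbors $u_1, u_2$ such that both $vu_1$ and $vu_2$ swap, then $\varphi_2(u_1) = \varphi_1(v) = \varphi_2(u_2)$, contradicting injectivity of $\varphi_2$. Since $G$ is connected and has at least three vertices, it contains some vertex $v_0$ of degree at least $2$, and by the previous observation at least one edge incident to $v_0$ must agree, which already forces $\varphi_1(v_0) = \varphi_2(v_0)$.

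Next I would propagate the agreement. Suppose $\varphi_1(v)=\varphi_2(v)$ for some vertex $v$ and let $u$ be any neighbor of $v$ in $G$. If the edge $vu$ swaps, then $\varphi_1(u)=\varphi_2(v)=\varphi_1(v)$, contradicting injectivity of $\varphi_1$ (since $u \ne v$). Hence $vu$ agrees, giving $\varphi_1(u)=\varphi_2(u)$. Walking along the edges of $G$ and using its connectedness, the set $\{v \in V(G): \varphi_1(v) = \varphi_2(v)\}$ is both non-empty (by the previous paragraph) and closed under taking neighbors, so it must equal $V(G)$.

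There is no real obstacle here; the only subtlety is that the hypotheses on $G$ (connected and $|V(G)| \geq 3$) are used precisely to guarantee a vertex of degree at least $2$ as the starting point for the propagation, since the statement fails for the single edge $K_2$, where swapping the two endpoints gives a different injective homomorphism inducing the same edge map.
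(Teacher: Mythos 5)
Your proof is correct and follows essentially the same route as the paper: both arguments first pin down $\varphi_1(v)=\varphi_2(v)$ at a vertex of degree at least two (the paper phrases this as $\varphi_1(v)$ being the unique common endpoint of $\varpsi(vu)$ and $\varpsi(vu')$, which is the same injectivity observation as your ``at most one incident edge can swap''), and then extend agreement across edges. The only cosmetic difference is that you propagate along paths via connectivity, whereas the paper notes directly that in a connected graph on at least three vertices every edge has an endpoint of degree at least two.
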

\begin{proof}
Let $v \in V(G)$ be a vertex of degree at least two and let $vu,vu' \in E(G)$ be a pair of edges incident to $v$. Now $\varphi_1(v)$ is the unique vertex in $H$ that is an end-vertex of both $\varpsi(vu)$ and $\varpsi(vu')$. Since $\varphi_2(v)$ can be described in the same way, we have $\varphi_1(v) = \varphi_2(v)$. Note that in a connected graph on at least three vertices, every edge has at least one end-vertex with degree at least two. This shows that $\varphi_1$ and $\varphi_2$ send at least one vertex of each edge in $G$ to the same vertex in $H$. Since they also send each edge in $G$ to the same edge in $H$, they must agree on every vertex of $G$.  
\end{proof}

\begin{example}\label{example:hgraphreconstructible}
Consider again the graph $G$ shown in Figure \ref{figure:mseparable2}(a). As we have seen in Example \ref{example:hgraph}, $G$ is $4$-connected, redundantly rigid and $\mathcal{R}_3$-separable.
\begin{claim*}
$G$ is not strongly reconstructible in $\CC^3$.
\end{claim*}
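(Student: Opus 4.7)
By Theorem~\ref{theorem:strongmeasurereconstructibility}, it suffices to exhibit a graph $H$ on $|V(G)|=37$ vertices and an edge bijection $\psi\colon E(G)\to E(H)$ with $M_{3,G}=M_{3,H}$ via $\psi$ such that $\psi$ is not induced by any graph isomorphism. From Example~\ref{example:hgraph} we already know that the partition $E(G)=E(G^o)\sqcup E(K_5)$ witnesses $r_3(G)=r_3(G^o)+r_3(K_5)$, so Lemma~\ref{lemma:productvariety} yields the product decomposition $M_{3,G}=M_{3,G^o}\times M_{3,K_5}$. The crucial feature of this right-hand side is that it depends only on the abstract edge sets of $G^o$ and of a copy of $K_5$; it is insensitive to how these two subgraphs are glued together to form $G$. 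My strategy is therefore to build $H$ by \emph{re-attaching} a fresh copy of $K_5$ to $G^o$ at a different $4$-element subset of $V(G^o)$.

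\textbf{Construction of $H$.} Choose four vertices $u_1,u_2,u_3,u_4\in V(G^o)$ satisfying (i) they are mutually non-adjacent in $G^o$; (ii) the graph $G^o\cup K_4$ obtained from $G^o$ by adjoining the six edges of the $K_4$ on $\{u_1,\ldots,u_4\}$ is rigid in $\RR^3$; and (iii) $\{u_1,\ldots,u_4\}$ is not in the $\mathrm{Aut}(G^o)$-orbit of the four inner attachment vertices of the original inner $K_5$ of $G$. A concrete candidate is to take $u_1,\ldots,u_4$ to be four outer degree-$4$ vertices of $G^o$, one lying in each of four distinct non-inner copies of $K_5$ of the outer ring, arranged compatibly with the $4$-fold rotational symmetry of $G$. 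These are automatically pairwise non-adjacent in $G^o$, giving (i); and they give (iii) because the $K_5$-copies of the outer ring that contain an inner attachment vertex are combinatorially distinguished in $G^o$ from those containing an outer degree-$4$ vertex. Now define $H:=G^o\cup K_5^{(u)}$, where $K_5^{(u)}$ is a $K_5$ on $\{u_0,u_1,u_2,u_3,u_4\}$ with $u_0$ a new vertex. Then $|V(H)|=37$ and $|E(H)|=|E(G)|$, and we take $\psi$ to be the identity on $E(G^o)$ paired with any bijection between the edges of the inner $K_5$ of $G$ and those of $K_5^{(u)}$.

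\textbf{Why the construction works.} Condition (ii) guarantees that $H$ is rigid in $\RR^3$, because $H$ arises from the rigid graph $G^o\cup K_4$ by adjoining the vertex $u_0$ with four neighbours, an operation well known to preserve rigidity in $\RR^3$. Hence $r_3(H)=3\cdot 37-6=105=r_3(G^o)+r_3(K_5^{(u)})$, so Lemma~\ref{lemma:productvariety} applied to the partition $E(H)=E(G^o)\sqcup E(K_5^{(u)})$ gives $M_{3,H}=M_{3,G^o}\times M_{3,K_5^{(u)}}=M_{3,G}$ via $\psi$. On the other hand, any graph isomorphism $\varphi\colon G\to H$ inducing $\psi$ would have to send the inner $K_5$ of $G$ onto $K_5^{(u)}$ (these being the two distinguished $K_5$ subgraphs sharing exactly $4$ vertices with the rest of the graph), and would therefore restrict to an automorphism of $G^o$ carrying the four inner attachment vertices of $G$ onto $\{u_1,\ldots,u_4\}$, contradicting (iii).

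\textbf{Main obstacle.} The delicate step is verifying condition (ii) for some concrete $4$-tuple $\{u_1,\ldots,u_4\}$ outside the inner orbit. I would establish it by the same reduction strategy as in the rigidity argument of Example~\ref{example:hgraph}: apply vertex deletions, vertex splittings and edge contractions in symmetric batches of four, exploiting the $4$-fold rotational symmetry of $G^o$, until one arrives at $K_4$. With a suitably symmetric choice of the $u_i$ the newly added $K_4$ merges naturally into the symmetric batches, so the argument should parallel that of Example~\ref{example:hgraph} with only minor modifications to the first round of reductions.
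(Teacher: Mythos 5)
Your strategy---re-attaching a fresh copy of $K_5$ to $G^o$ at a different $4$-set and invoking Lemma~\ref{lemma:productvariety} on both sides---is a legitimate route in principle, but as written it has a genuine gap at exactly the point you flag as the ``main obstacle'': condition~(ii) is asserted, not proved. This is not a routine verification that can be deferred. The graph $G^o$ has $\dof_3(G^o)=3\cdot 36-6-96=6$ internal degrees of freedom, and your $K_4$ on $\{u_1,\ldots,u_4\}$ contributes exactly $6$ new edges, so there is no slack whatsoever: $G^o\cup K_4$ is rigid if and only if all six new edges independently remove degrees of freedom, and this depends delicately on where the $u_i$ sit relative to the hinge structure of the ring of twelve $K_5$'s. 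Without carrying out the reduction for your specific $4$-tuple, the claim $r_3(H)=105$ is unsupported, and with it the product decomposition $M_{3,H}=M_{3,G^o}\times M_{3,K_5^{(u)}}$ and hence the whole argument. A secondary (but fixable) slip: you cannot take ``any bijection'' between $E(K_5)$ and $E(K_5^{(u)})$; the measurement variety of $K_5$ is cut out by the Cayley--Menger relation, which is invariant only under the $S_5$ of vertex relabelings, not under arbitrary permutations of the ten edge coordinates. The bijection must be induced by a vertex bijection.

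The paper avoids all of this by taking $H=G$ itself. Since $M_{3,G}=M_{3,G^o}\times M_{3,K_5}$ and $M_{3,K_5}$ is invariant under the edge permutation induced by \emph{any} permutation of the five inner vertices, one gets $M_{3,G}=M_{3,G}$ under the edge bijection $\psi$ that fixes $E(G^o)$ pointwise and shuffles the edges of the inner $K_5$ nontrivially; Lemma~\ref{lemma:sameedgemap} then forces any automorphism inducing $\psi|_{E(G^o)}=\mathrm{id}$ to be the identity, which does not induce $\psi$. No new rigidity computation is needed beyond what Example~\ref{example:hgraph} already established. That said, if you did complete the verification of~(ii), your construction would prove something strictly stronger than the stated claim, namely that $G$ fails even \emph{weak} reconstructibility (a non-isomorphic $H$ with the same measurement variety exists when $\{u_1,\ldots,u_4\}$ is chosen outside the $\mathrm{Aut}(G^o)$-orbit of the inner attachment set) --- this is analogous to what the paper does for the graph of Figure~\ref{figure:mseparable2}(b) by swapping edges inside the inner $K_6-e$, where the corresponding rank condition is automatic. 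So the idea has value, but the proof of the claim as posed is incomplete until~(ii) is established.
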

\begin{proof}\vspace{-0.2em}
By the $\mathcal{R}_3$-separability of $G$ and Lemma \ref{lemma:productvariety}, we have $M_{d,G} = M_{d,G^o} \times M_{d,K_5}$, where $K_5$ denotes the complete subgraph of $G$ induced by the inner five vertices. Let $\varpsi$ be a permutation of the edges of $G$ that leaves the edges of $G^o$ in place and permutes the edges of $K_5$ according to some permutation of its five vertices. Then $M_{d,G} =_\varpsi M_{d,G}$, i.e.\ $M_{d,G}$ is invariant under the permutation of the coordinate axes in $\CC^{E(G)}$ induced by $\varpsi$. However, $\varpsi$ is not induced by a graph automorphism of $G$: since it leaves the edges of $G^o$ in place, by Lemma \ref{lemma:sameedgemap} such an automorphism would have to leave the vertices of $G^o$ in place and thus be the identity map on $G$, which does not induce $\varpsi$. By Theorem \ref{theorem:strongmeasurereconstructibility}, this shows that $G$ is not strongly reconstructible.
\end{proof}
\vspace{-0.5em}
It can be shown similarly that the graph $G'$ shown in Figure \ref{figure:mseparable2}(b) is not even weakly reconstructible in $\CC^3$: the graph obtained by adding the missing edge to the inner $K_6$ and removing a different edge from it has the same measurement variety as $G'$, even though the two graphs are not isomorphic. These examples can also be generalized to higher dimensions using the results on body-hinge graphs in \cite{JKT}.
\end{example}

The graphs considered in Example \ref{example:hgraphreconstructible} are all $\mathcal{R}_3$-separable. The next example is of an $\mathcal{R}_3$-connected graph that is not strongly reconstructible in $\CC^3$.

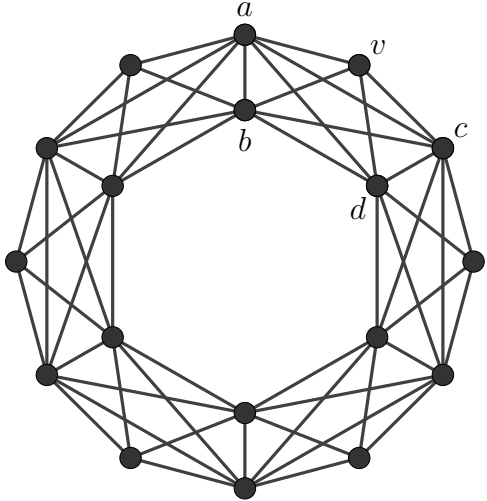
\begin{figure}[ht]
    \centering
        \begin{tikzpicture}[x = 1cm, y = 1cm, scale = 4]
            \node[draw=none,rotate=90,minimum size=4cm,regular polygon,regular polygon sides=6] (a) {};
            \node[draw=none,rotate=45,minimum size=6cm,regular polygon,regular polygon sides=12] (b) {};
            \pgfmathtruncatemacro{\n}{6}
            \pgfmathtruncatemacro{\m}{\n - 2}
            
            \foreach \x in {1,2,...,6}
            {\pgfmathtruncatemacro\y{Mod(\x,6) + 1}
            \draw [line width=\normaledge, color=edgeblack] (a.corner \x) -- (a.corner \y);
            \pgfmathtruncatemacro\z{2*\x - 1}
            \draw [line width=\normaledge, color=edgeblack] (a.corner \x) -- (b.corner \z);
            \pgfmathtruncatemacro\z{2*\x}
            \draw [line width=\normaledge, color=edgeblack] (a.corner \x) -- (b.corner \z);
            \pgfmathtruncatemacro\z{Mod(2*\x,12) + 1}
            \draw [line width=\normaledge, color=edgeblack] (a.corner \x) -- (b.corner \z);
            \pgfmathtruncatemacro\z{Mod(2*\x + 1,12) + 1}
            \draw [line width=\normaledge, color=edgeblack] (a.corner \x) -- (b.corner \z);
            \pgfmathtruncatemacro\z{Mod(2*\x - 3,12) + 1}
            \draw [line width=\normaledge, color=edgeblack] (a.corner \x) -- (b.corner \z);
            \pgfmathtruncatemacro\y{Mod(2*\x + 3,12) + 1}
            \pgfmathtruncatemacro\z{Mod(2*\x + 1,12) + 1}
            \draw [line width=\normaledge, color=edgeblack] (b.corner \y) -- (b.corner \z);
            }

            \foreach \x in {1,2,...,12}
            {\pgfmathtruncatemacro\y{Mod(\x,12) + 1}
            \draw [line width=\normaledge, color=edgeblack] (b.corner \x) -- (b.corner \y);
            }
            
            \foreach \x in {1,2,...,6}
            \draw [fill=vertexblack] (a.corner \x) circle (1pt);
            \foreach \x in {1,2,...,12}
            \draw [fill=vertexblack] (b.corner \x) circle (1pt);
            
            \draw [fill = vertexblack] (b.corner 11) circle (1pt) node[above right]{$v$};
            \draw [fill = vertexblack] (b.corner 12) circle (1pt) node[above=3pt]{$a$};
            \draw [fill = vertexblack] (b.corner 10) circle (1pt) node[above right]{$c$};
            \draw [fill = vertexblack] (a.corner 6) circle (1pt) node[below=3pt]{$b$};
            \draw [fill = vertexblack] (a.corner 5) circle (1pt) node[below left]{$d$};
        
        \end{tikzpicture}
        \caption{A graph that is $4$-connected, redundantly rigid in $\RR^3$ and $M$-connected in $\RR^3$, but not globally rigid in $\RR^3$.}
    \label{figure:6ring}
\end{figure}

\begin{example}
\label{ex:6ring}
Let $G$ be the $6$-ring of $K_5$'s shown in Figure \ref{figure:6ring}. As noted before, it is $4$-connected, redundantly rigid in $\RR^3$ and $\mathcal{R}_3$-connected.

\begin{claim*} $G$ is not strongly reconstructible in $\CC^3$.
\end{claim*}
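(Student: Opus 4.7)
The plan is to apply part (iii) of Theorem~\ref{theorem:strongmeasurereconstructibility} and exhibit an edge bijection $\psi: E(G) \to E(G)$ that preserves $M_{3,G}$ but is not induced by any graph automorphism of $G$. The key structural fact about the 6-ring of $K_5$'s is that it is an $H$-graph in $\RR^3$ (as shown in \cite{JKT}), so it is not globally rigid in $\RR^3$. By Theorem~\ref{theorem:globallyrigidcharacterization}, for a generic realization $(G,p)$ in $\CC^3$ there exists an equivalent realization $(G,q)$ such that $q$ is not an affine image of $p$. The aim is to turn this geometric non-rigidity into an algebraic self-symmetry of $M_{3,G}$.

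I would focus on the $K_5$ subgraph $K$ with vertices $\{a,b,c,d,v\}$ as labeled in the figure, where $\{a,b\}$ and $\{c,d\}$ are the two hinges connecting $K$ to its two neighboring $K_5$'s in the ring, and $v$ is the unique non-hinge vertex. The candidate edge bijection $\psi$ would act on the ten edges of $K$ as the permutation induced by the vertex swap $\tau: a \leftrightarrow c,\ b \leftrightarrow d,\ v \mapsto v$, and as the identity on all other edges of $G$. The main task is to verify that $\psi$ preserves $M_{3,G}$: concretely, to exhibit for a generic $(G,p')$ in $\CC^3$ another generic realization $(G,q')$ with $m_{3,G}(q') = \Psi(m_{3,G}(p'))$. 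That is, $(G,q')$ should have the same edge lengths as $(G,p')$ but with the roles of the two hinges within $K$ interchanged. The existence of such a $q'$ should follow from the partial flexibility underlying the non-global-rigidity of the 6-ring --- specifically, a local rigid motion of $K$ that swaps its two hinges while the rest of the ring is adjusted to stay equivalent. Density of generic points in the irreducible variety $M_{3,G}$ would then upgrade this to $\Psi(M_{3,G}) = M_{3,G}$.

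The second step is to check that $\psi$ is not induced by any graph automorphism of $G$. By Lemma~\ref{lemma:sameedgemap}, since $\psi$ is the identity on every edge outside $K$, any inducing graph automorphism $\varphi$ of $G$ would have to fix every vertex outside $\{a,b,c,d\}$ and act as $\tau$ on these four vertices. One then verifies that this forced $\varphi$ is not an automorphism of $G$: although the dihedral symmetry of the 6-ring contains an automorphism globally swapping the ``previous'' and ``next'' $K_5$'s (hence also swapping $\{a,b\}$ with $\{c,d\}$), such an automorphism moves vertices outside $\{a,b,c,d\}$ and therefore cannot induce our $\psi$. So only a strictly local swap is available, and it does not preserve the attachments of the hinges to the rest of the ring.

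The main obstacle will be the first step: turning the intuitive ``hinge swap'' of $K$ into an actual realization-producing operation on generic frameworks, rather than a mere formal edge relabeling. This requires extracting from the non-global-rigidity construction of \cite{JKT} an explicit one-parameter or discrete family of deformations whose effect on the edge-length vector is exactly the $\psi$-permutation. If the naive ``swap one $K_5$'s hinges'' operation does not directly admit such a realization (for example because the ring's hinges do not close up after a single-$K_5$ swap), the construction would need to be enlarged to a coordinated swap across several $K_5$'s simultaneously, while retaining the property that the resulting edge bijection is locally non-trivial on a subgraph whose complementary graph structure forbids a global automorphism.
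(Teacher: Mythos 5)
Your second step (no automorphism of $G$ induces $\psi$) is fine and matches the paper's use of Lemma~\ref{lemma:sameedgemap}, but your first step has a genuine gap, and for the specific permutation you chose it is actually false. Your $\tau\colon a\leftrightarrow c,\ b\leftrightarrow d$ sends the hinge edge $ab$ to the hinge edge $cd$ and vice versa, while fixing every edge outside $K$. But $ab$ is also an edge of the \emph{neighbouring} $K_5$, whose other nine edges $\psi$ fixes. Since $K_5$ is an $M$-circuit in $\RR^3$, its ten squared edge lengths satisfy a nontrivial polynomial relation (the Cayley--Menger relation) depending genuinely on every coordinate; for a generic $(G,p)$ the length of $cd$ is algebraically independent of the nine retained lengths of that neighbouring $K_5$, so the permuted vector does not project into $M_{3,K_5}$ and hence $\Psi\bigl(m_{3,G}(p)\bigr)\notin M_{3,G}$. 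No amount of ``partial flexibility'' can rescue this: non-global-rigidity of the $6$-ring produces a second point in the \emph{same} fiber of $m_{3,G}$ (identical edge lengths), which is a different thing from a realization with \emph{permuted} edge lengths, so Theorem~\ref{theorem:globallyrigidcharacterization} does not supply what you need.

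The paper's proof works around exactly this obstruction by choosing the vertex swap $a\leftrightarrow b$, $c\leftrightarrow d$ instead, which fixes both hinge edges $ab$ and $cd$ and only moves the four ``diagonal'' edges $ac,ad,bc,bd$ and the four cone edges at $v$. It first deletes $v$: in $H=G-v$ the edges $ac,ad,bc,bd$ become $M$-bridges, so by Lemma~\ref{lemma:productvariety} $M_{3,H}=M_{3,H'}\oplus\CC^4$ and those four coordinates may be permuted freely. This yields a generic $(H,q)$ realizing the swapped lengths; the congruence of the quadruples $(p(a),p(b),p(c),p(d))$ and $(q(b),q(a),q(d),q(c))$ then gives a rigid motion that tells you where to place $q(v)$, and the cone edges come out permuted accordingly. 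If you want to repair your argument you would need to switch to this permutation (or another one fixing the hinges) and supply the $M$-bridge/product-variety mechanism; as written, the central claim $\Psi(M_{3,G})=M_{3,G}$ is not only unproved but untrue.
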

\begin{proof}\vspace{-0.2em}
Let $v$ be a vertex of degree four and let us denote the vertices of the $K_5$ subgraph
that contains $v$ by $\{v,a,b,c,d\}$, where the edges $ab$ and $cd$ are shared
by neighbouring $K_5$'s. Let $H = G - v$.  It is easy to check that the edges $ac,ad,bc,bd$ are all $\mathcal{R}_3$-bridges in $H$.
Thus by Lemma \ref{lemma:productvariety}, we have $M_{d,H}=M_{d,H'}\oplus \CC^4$, where $H'=H-\{ac,ad,bc,bd\}$. Let $(G,p)$ be a generic realization of $G$ and (by a slight abuse of notation)
let $(H,p)$ be its restriction to $H$.

Consider the permutation $\varpsi$ of the edges of $H$ that leaves the edges of $H'$ in place and maps $ac,ad,bc,bd$ to $bd,bc,ad,ac$, respectively. Since $M_{d,H}$ is invariant under the permutation of coordinate axes induced by $\varpsi$, Lemma \ref{lemma:genericimage} implies that there exists a generic realization $(H,q)$ of $H$ in $\CC^3$ such that $(H',q|_{V(H')})$ and $(H',p|_{V(H')})$ are equivalent and $m_{ac}(p)=m_{bd}(q)$, $m_{bd}(p)=m_{ac}(q)$, $m_{bc}(p)=m_{ad}(q)$, and $m_{ad}(p)=m_{bc}(q)$. This implies that the point configurations $(p(a),p(b),p(c),p(d))$ and $(q(b),q(a),q(d),q(c))$ are congruent, i.e.\ the points in them have the same pairwise squared distances. Since by genericity they have full affine span, this also implies that they are strongly congruent, in other words, there is a rigid motion of $\CC^3$ that maps $p(a), p(b), p(c), p(d)$ to $q(b),q(a),q(d),q(c)$, respectively. 

Let us extend $(H,q)$ to a realization $(G,q)$ by defining $q(v)$ to be the image of $p(v)$ under this rigid motion, and let us also extend $\varpsi$ to all of $E(G)$ by mapping $va,vb,vc,vd$ to $vb,va,vd,vc$, respectively. Then $m_{d,G}(p) = m_{d,G}(q)$ under the edge permutation $\varpsi$. On the other hand, $\varpsi$ is not induced by a graph automorphism of $G$: since it leaves the edges of $H'$ in place, by Lemma \ref{lemma:sameedgemap} such an automorphism would leave the vertices of $H'$ in place and consequently it would have to be the identity map on $G$, which does not induce $\varpsi$. By Theorem \ref{theorem:strongmeasurereconstructibility}, this shows that $G$ is not strongly reconstructible in $\CC^3$, as desired. 
\end{proof}
\end{example}

Finally, we construct examples of fully reconstructible graphs with small separators.
In the next proof, we shall use the following fact. Let $G_1, G_2$ be rigid graphs in $\RR^d$ on at least $d+1$ vertices and let $G$ be obtained from $G_1$ and $G_2$ by identifying $k$ pairs of vertices. Then if $0 \leq k \leq d - 1$, we have $r_d(G) = d|V(G)| - \binom{d+1}{2} - \binom{d-k+1}{2}$, and if $k \geq d$, then $G$ is rigid. This follows e.g.\ from the ``gluing lemma'' \cite[Lemma 11.1.9]{Whlong}, or it can be seen directly by considering the infinitesimal motions of a generic realization of $G$ in $\RR^d$.

\begin{theorem}\label{thm:gluingstronglyreconstructible}
Let $G = (V,E)$ be a graph with induced subgraphs $G_1 = (V_1,E_1)$ and $G_2 = (V_2,E_2)$ for which $V_1 \cup V_2 = V$ and $V_1 \cap V_2$ induces a connected subgraph of $G$ on at least three vertices. Let $d \geq 1$. If $G_1$ and $G_2$ are fully reconstructible rigid graphs on at least $d+1$ vertices in $\CC^d$, then $G$ is fully reconstructible in $\CC^d$.
\end{theorem}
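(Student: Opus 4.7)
My plan is to invoke Theorem~\ref{theorem:strongmeasurereconstructibility}: it suffices to show that whenever $M_{d,G} = M_{d,H}$ under an edge bijection $\psi$ for a graph $H$ without isolated vertices, $\psi$ is induced by a graph isomorphism. For $i = 1,2$, let $H_i$ denote the subgraph of $H$ induced by the edge set $\psi(E_i)$. Since $G_i$ is an \emph{induced} subgraph of $G$, projecting the equality $M_{d,G} = M_{d,H}$ onto the $E_i$-coordinates yields $M_{d,G_i} = M_{d,H_i}$ under $\psi|_{E_i}$, and $H_i$ has no isolated vertices by construction. Full reconstructibility of $G_i$ then produces a graph isomorphism $\varphi_i : V_i \to V(H_i)$ that induces $\psi|_{E_i}$.

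Next, I would glue $\varphi_1$ and $\varphi_2$ along $V_0 := V_1 \cap V_2$. Both $\varphi_1|_{V_0}$ and $\varphi_2|_{V_0}$ are injective graph homomorphisms from the connected graph $G_0 := G[V_0]$ into $H$ that induce the same edge map $\psi|_{E(G_0)}$, so Lemma~\ref{lemma:sameedgemap} (which uses $|V_0| \ge 3$) yields $\varphi_1|_{V_0} = \varphi_2|_{V_0}$. Thus the piecewise map $\varphi : V \to V(H_1) \cup V(H_2)$ defined by $\varphi|_{V_i} = \varphi_i$ is well-defined, and it induces $\psi$ on $E_1 \cup E_2 = E$. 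I take the natural reading of the hypothesis that $G$ has no edges between $V_1 \setminus V_0$ and $V_2 \setminus V_0$, so that $E = E_1 \cup E_2$; then $V(H) = V(H_1) \cup V(H_2)$ because $H$ has no isolated vertices, and $\varphi$ is surjective.

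The main obstacle is injectivity of $\varphi$, which reduces to showing $V(H_1) \cap V(H_2) = \varphi_1(V_0)$. For this I compare dimensions using the classical gluing formula for rigid subgraphs (\cite[Lemma~11.1.9]{Whlong}):
\begin{equation*}
r_d(G) \;=\; d|V| - \binom{d+1}{2} - \binom{d - |V_0| + 1}{2}
\end{equation*}
(with the convention $\binom{n}{2} = 0$ for $n \le 1$), and the analogous identity for $r_d(H)$ with $k' := |V(H_1) \cap V(H_2)|$ in place of $|V_0|$. Combining $\dim M_{d,G} = r_d(G) = r_d(H) = \dim M_{d,H}$ (Lemma~\ref{lemma:varietydimension}) with $|V(H)| = |V| - (k' - |V_0|)$ and the identity $\binom{a}{2} - \binom{a-x}{2} = x(2a - x - 1)/2$, the equation collapses to $(k' - |V_0|)(k' + |V_0| - 1) = 0$. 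Since $k' \ge |V_0| \ge 3$ (the lower bound on $k'$ comes from $\varphi_1(V_0) = \varphi_2(V_0) \subseteq V(H_1) \cap V(H_2)$), this forces $k' = |V_0|$. It follows that any $u \in V_1 \setminus V_0$ satisfies $\varphi_1(u) \in V(H_1) \setminus V(H_2)$, which rules out $\varphi_1(u) = \varphi_2(v)$ for any $v \in V_2 \setminus V_0$. Hence $\varphi$ is a bijection, and by construction a graph isomorphism inducing $\psi$; Theorem~\ref{theorem:strongmeasurereconstructibility} concludes that $G$ is fully reconstructible in $\CC^d$.
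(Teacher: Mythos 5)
Your proposal is correct and follows essentially the same route as the paper: reduce to an equality of measurement varieties via Theorem~\ref{theorem:strongmeasurereconstructibility}, apply full reconstructibility of $G_1$ and $G_2$ to obtain $\varphi_1,\varphi_2$, glue them along $V_1\cap V_2$ via Lemma~\ref{lemma:sameedgemap}, and force $k'=|V(H_1)\cap V(H_2)|=|V_1\cap V_2|$ by a rank count based on the gluing formula. The one caveat is that your identity $\binom{a}{2}-\binom{a-x}{2}=x(2a-x-1)/2$ is incompatible with the convention $\binom{n}{2}=0$ for $n\le 1$ once $k'\ge d+2$, so (as the paper does) one should split into the cases $k'\le d-1$ and $k'\ge d$; the conclusion $k'=|V_1\cap V_2|$ survives in every case, so this is only a presentational gap.
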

\begin{proof}
By Theorem \ref{theorem:strongmeasurereconstructibility}, it suffices to show that if for some graph $H$ we have $M_{d,G} =_\psi M_{d,H}$ under some edge bijection $\varpsi$, then $\varpsi$ is induced by a graph isomorphism $\varphi : V(G) \rightarrow V(H)$. Let $H_1, H_2$ be the subgraphs of $H$ induced by $\varpsi(E_1)$ and $\varpsi(E_2)$, respectively. Now for $i=1,2$, $M_{d,G_i} = M_{d,H_i}$, so by the full reconstructibility of $G_i$, there is a graph isomorphism $\varphi_i : V_i \rightarrow V(H_i)$ that induces $\left.\varpsi\right|_{E_i}$. Since $V_1 \cap V_2$ induces a connected subgraph of $G$, Lemma \ref{lemma:sameedgemap} applies to $\varphi_1|_{V_1 \cap V_2}$ and $\varphi_2|_{V_1 \cap V_2}$, giving $\varphi_1(v) = \varphi_2(v)$ for all $v \in V_1 \cap V_2$. 

It follows that $H$ is the union of two subgraphs $H_1,H_2$ which are isomorphic to $G_1,G_2$, respectively, and have $k\geq |V_1\cap V_2|$ vertices in common. Let $\ell = |V_1\cap V_2|\geq 3$. We first show that $k = \ell$. Note that $|V(G)|=|V_1|+|V_2|- \ell$ and $|V(H)|=|V_1|+|V_2|- k$, and hence $|V(H)|-|V(G)|=l-k$.


Let us first consider the $k\leq d-1$ case. Since by Theorem \ref{thm:matroidisomorphism} the $d$-dimensional rigidity matroids of  $G$ and $H$ are isomorphic, we obtain
\begin{equation*}
d|V(G)| -\binom{d+1}{2}-\binom{d-\ell +1}{2} = r_d(G) =  r_d(H) = d|V(H)|-\binom{d+1}{2}-\binom{d-k+1}{2}.
\end{equation*}
This gives
\begin{equation*}
    \binom{d+1}{2} + \binom{k}{2} = dk + \binom{d-k+1}{2} = d\ell + \binom{d-\ell+1}{2} = \binom{d+1}{2} + \binom{\ell}{2},
\end{equation*}
where the second equality comes from the previous equation and the first and third equalities from direct calculation. It follows that $k=\ell$. 
An analogous argument shows that $k = \ell$ holds in the $k\geq d$ case as well.

This implies that the only vertices of $H$ that are in the image of both $\varphi_1$ and $\varphi_2$ are those in the image of $V_1 \cap V_2$, where $\varphi_1$ and $\varphi_2$ agree. Hence we can ``glue'' $\varphi_1$ and $\varphi_2$, i.e.\ the mapping $\varphi: V \rightarrow V(H)$ defined by $\varphi|_{V_i} = \varphi_i, i = 1,2$ is well-defined and is an isomorphism (in particular, it is injective). Then $\varphi$ induces $\varpsi$, as required.
\end{proof}

In fact, the proof shows that we can slightly relax the conditions in Theorem \ref{thm:gluingstronglyreconstructible} by only requiring that each connected component of the graph induced by $V_1 \cap V_2$ has at least three vertices. 

\begin{example}\label{example:gluedcompletegraph}
Let $d \geq 1$ and let $G_d$ be the graph obtained by gluing two copies of the complete graph $K_{d+2}$ along three pairs of vertices. Theorems \ref{theorem:fullyreconstructible} and \ref{thm:gluingstronglyreconstructible} imply that $G_d$ is fully reconstructible in $\CC^d$. This example shows that fully (or strongly) reconstructible graphs need not be $(d+1)$-connected in the $d \geq 3$ case, which gives a negative answer to a question posed in \cite[Section 7]{GJ}. Also note that for $d \geq 4$, $G_d$ is not even rigid in $\RR^d$. It is unclear whether there exist non-rigid fully reconstructible graphs in $\CC^3$.
\end{example}

The previous example also shows that a fully reconstructible graph in $\CC^d$ need not be globally rigid in $\RR^d$ in the $d \geq 3$ case. Another such example is given in \cite{K55}, where it is shown, using a computer-assisted proof, that the complete bipartite graph $K_{5,5}$ is fully reconstructible in $\CC^3$. A combinatorial characterization of fully reconstructible graphs in $\CC^d$ seems elusive, even in the $d = 3$ case.

\subsection{Monotonicity of unlabeled reconstructibility}

The graph $G_d$ of Example \ref{example:gluedcompletegraph} also shows that for $d \geq 4$, edge addition does not necessarily preserve strong (or full) reconstructibility in $\CC^d$. Indeed, $G_d$ is fully reconstructible in $\CC^d$, but for any pair of non-neighbouring vertices $u,v \in V(G_d)$, $uv$ is an $\Rd$-bridge in $G_d + uv$. \cite[Theorem 5.21]{GJ} states that strongly reconstructible graphs in $\CC^d$ (on at least $d+2$ vertices and without isolated vertices) do not contain $\Rd$-bridges, so that $G_d + uv$ is not strongly reconstructible in $\CC^d$. It would be interesting to see whether this phenomenon can only happen if the newly added edge is an $\Rd$-bridge.

\begin{question}\label{question:linkedpair}
Let $d \geq 1$ and let $G = (V,E)$ be a graph on at least $d+2$ vertices that is strongly reconstructible in $\CC^d$ (fully reconstructible in $\CC^d$, respectively). Is it true that 
if for some pair of vertices $u,v \in V$ we have $uv \notin E$ and $r_d(G) = r_d(G + uv)$, then $G + uv$ is strongly reconstructible in $\CC^d$ (fully reconstructible in $\CC^d$, respectively)?
\end{question}

We can prove the following weaker result. We say that a pair $\{u,v\}$ of vertices in a graph $G$ is {\it globally linked in $G$} in $\CC^d$
if for every generic framework $(G,p)$ in $\CC^d$ and every equivalent realization $(G,q)$, we have $m_{uv}(p) = m_{uv}(q)$.

\begin{lemma}\label{lemma:globallylinked}
Let $G = (V,E)$ be a strongly reconstructible graph in $\CC^d$ and suppose that a pair of vertices $u,v \in V$ 
is globally linked in $G$ in $\CC^d$.
Then $G' = G + uv$ is strongly reconstructible in $\CC^d$. Moreover, if $G$ is fully reconstructible in $\CC^d$, then so is $G'$.
\end{lemma}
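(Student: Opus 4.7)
The plan is to reduce the reconstructibility of $G'$ to that of $G$ by projecting the measurement variety onto the $E$-coordinates, and then to pin down the image of the edge $uv$ under a potential edge bijection using the global linkedness of $\{u,v\}$. By Theorem \ref{theorem:strongmeasurereconstructibility}, it suffices to show that whenever $M_{d,G'} = M_{d,H'}$ under some edge bijection $\psi'$ (with $H'$ having the same number of vertices as $G'$ in the strong case, or an arbitrary number of vertices in the full case), the bijection $\psi'$ is induced by a graph isomorphism. Setting $e' = \psi'(uv)$, $H = H' - e'$, and $\psi = \psi'|_{E}$, the equality $M_{d,G'} = M_{d,H'}$ descends, by projecting onto the $E$-coordinates, to $M_{d,G} = M_{d,H}$ under $\psi$.

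Next I will apply the hypothesis on $G$. In the strong case, $|V(H)| = |V(H')| = |V(G')| = |V(G)|$, so strong reconstructibility of $G$ directly gives an isomorphism $\varphi: V(G) \to V(H)$ that induces $\psi$. In the full case, deleting isolated vertices from $H$ does not change its measurement variety; letting $\tilde{H}$ denote the result, full reconstructibility of $G$ yields an isomorphism $\varphi: V(G) \to V(\tilde{H})$ inducing $\psi$. (Here I may assume without loss of generality that $H'$ itself has no isolated vertices, as these may also be discarded without affecting $M_{d,H'}$.)

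The remaining task is to verify $\psi'(uv) = \varphi(u)\varphi(v)$, which implies that $\varphi$ induces $\psi'$ and is a graph isomorphism $G' \to H'$; in the full case this additionally forces the endpoints of $e'$ to lie in $V(\tilde{H})$, so that $\tilde{H} = H$ and $\varphi$ is a bijection onto $V(H')$. To this end, pick a generic realization $(G',p)$ in $\CC^d$. By Lemma \ref{lemma:genericimage}, there is a generic framework $(H',q)$ in $\CC^d$ with $m_{d,G'}(p) = m_{d,H'}(q)$ under $\psi'$. Then $(G,p)$ and $(G, q \circ \varphi)$ are equivalent realizations of $G$ (since $\varphi$ induces $\psi$ on $E$), with $(G,p)$ generic, so global linkedness of $\{u,v\}$ in $G$ gives $m_{uv}(p) = m_{\varphi(u)\varphi(v)}(q)$. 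On the other hand, length-equivalence on the edge $uv \in E(G')$ gives $m_{uv}(p) = m_{e'}(q)$. Combining, and writing $e' = wx$, we obtain $m_{wx}(q) = m_{\varphi(u)\varphi(v)}(q)$; since $q$ is generic (so its coordinates are algebraically independent and the squared-distance polynomials of distinct unordered vertex pairs take distinct values) this forces $\{w,x\} = \{\varphi(u), \varphi(v)\}$, as required.

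The main obstacle I anticipate is the full reconstructibility case: the graph $H = H' - e'$ may in principle have isolated vertices arising from degree-one endpoints of $e'$ in $H'$, so full reconstructibility of $G$ gives $\varphi$ only as a map onto $V(\tilde{H})$ rather than onto $V(H')$. This is precisely resolved by the global linkedness identification $\{w,x\} = \{\varphi(u),\varphi(v)\}$, which places every endpoint of $e'$ in the image of $\varphi$, so that $\tilde{H} = H$ and $V(H') = \varphi(V(G'))$.
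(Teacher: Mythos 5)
Your proposal is correct and follows essentially the same route as the paper's proof: reduce via Theorem \ref{theorem:strongmeasurereconstructibility}, delete $\psi'(uv)$ to invoke the reconstructibility of $G$ and obtain $\varphi$, then use a generic length-equivalent realization together with global linkedness of $\{u,v\}$ and genericity to force $\psi'(uv) = \varphi(u)\varphi(v)$. The only difference is cosmetic (the paper first composes with $\varphi^{-1}$ to reduce to comparing $G+uv$ with $G+u'v'$), and your explicit handling of possible isolated vertices in $H'-e'$ in the fully reconstructible case is a welcome detail that the paper leaves implicit.
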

\begin{proof}
By Theorem \ref{theorem:strongmeasurereconstructibility}, it is sufficient to show that if $M_{d,G'} =_\psi M_{d,H'}$ under some edge bijection $\varpsi$, where $H'$ is a graph on the same number of vertices as $G'$, then $\varpsi$ is induced by a graph isomorphism. Let $H$ denote $H' - \varpsi(uv)$. Then $M_{d,G} =_\psi M_{d,H}$ under the edge bijection $\varpsi|_{E(G)}$, and thus again by Theorem \ref{theorem:strongmeasurereconstructibility}, the strong reconstructibility of $G$ implies that $\varpsi|_{E(G)}$ is induced by a graph isomorphism $\varphi: V(G) \rightarrow V(H)$. It is sufficient to show that $\varpsi(uv) = \varphi(u)\varphi(v)$. After composing $\varpsi$ with the edge bijection induced by $\varphi^{-1}$, this amounts to showing that if $M_{d,G + uv} =_{\psi'} M_{d,G + u'v'}$ under the edge bijection $\psi'$ that fixes the edges of $G$ and sends $uv$ to $u'v'$, then $\{u',v'\} = \{u,v\}$. 

Let $(G,p)$ be a generic realization of $G$ in $\CC^d$. By Lemma \ref{lemma:genericimage}, there is a generic realization $(G,q)$, equivalent to $(G,p)$ and such that $m_{uv}(p) = m_{u'v'}(q)$. Since $\{u,v\}$ is globally linked in $G$, we must also have $m_{uv}(p) = m_{uv}(q)$. It follows from the genericity of $q$ that $\{u,v\} = \{u',v'\}$, as required. 

The same proof works when $G$ is fully reconstructible in $\CC^d$.
\end{proof}

We may also consider the effect of edge deletion on unlabeled reconstructibility. The analogue of Question \ref{question:linkedpair} for deleting edges is not true: it is not difficult to find an example of a graph $G$ that is strongly (or fully) reconstructible in $\CC^d$ but for which $G - uv$ is not strongly (or fully) reconstructible for some edge $uv \in E$, even though $r_d(G - uv) = r_d(G)$. This can happen e.g.\ if $G$ is globally rigid and $G - uv$ contains an $\Rd$-bridge. However, it is possible that the analogue of Lemma \ref{lemma:globallylinked} for edge deletions is true.

\begin{question}\label{question:edgedeletion}
Let $d \geq 3$ and let $G = (V,E)$ be strongly reconstructible in $\CC^d$ (fully reconstructible in $\CC^d$, respectively). Is it true that 
if for some edge $uv \in E$ we have that $\{u,v\}$ is globally linked in $G-uv$, then $G-uv$ is strongly reconstructible in $\CC^d$ (fully reconstructible in $\CC^d$, respectively)?
\end{question}

The characterization of strong and full reconstructibility in $\CC^1$ and $\CC^2$ given by Theorem \ref{theorem:GJmain} shows that for $d = 2$, the answer to Question \ref{question:edgedeletion} is positive, while for $d=1$, it is negative: let $G$ be a $3$-connected graph and suppose that $G - uv$ is not $3$-connected for some edge $uv \in E(G)$. Then $G$ is fully reconstructible in $\CC^1$ and $\{u,v\}$ is globally linked in $G-uv$
(in fact, $G-uv$ is globally rigid in $\RR^1$), but $G - uv$ is not strongly reconstructible in $\CC^d$.

\section{Graphs with nonseparable rigidity matroids}\label{section:mseparable}

In light of Theorem \ref{theorem:mconnected}, the combinatorial properties of $\Rd$-connected graphs may be of interest in studying global rigidity. However, not much seems to be known about these graphs in the $d \geq 3$ case. In this section, we collect three results related to this notion.

\begin{theorem}
\label{theorem:mconndown}
Let $G=(V,E)$ be an $\Rd$-connected graph.
Then $G$ is $\mathcal{R}_{d'}$-connected for all $1\leq d'\leq d$.
\end{theorem}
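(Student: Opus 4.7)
The plan is to prove the contrapositive: if $G=(V,E)$ is $M$-separable in $\RR^{d-1}$, then $G$ is also $M$-separable in $\RR^d$. Iterating this yields the full result. So let $E=E_1\sqcup E_2$ be a partition witnessing $M$-separability in $\RR^{d-1}$, with $G_i$ the subgraph induced by $E_i$. By Lemma \ref{lemma:productvariety} (applied at dimension $d-1$) together with Lemma \ref{lemma:spaceofstresses}, this hypothesis is equivalent to the stress-space direct sum decomposition $S(G,p') = S(G_1,p') \oplus S(G_2,p')$ for a generic realization $(G,p')$ in $\CC^{d-1}$, where the summands are viewed as subspaces of $\CC^E=\CC^{E_1}\oplus\CC^{E_2}$.

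The main tool I would use is a stress-space identity obtained by reordering the columns of the rigidity matrix by coordinate index: for any realization $(G,p)$ in $\CC^d$ with coordinate decomposition $p=(p^{(1)},\ldots,p^{(d)})$ (each $p^{(k)}\in \CC^V$), one has $S(G,p)=\bigcap_{k=1}^d S(G,p^{(k)})$, where $S(G,p^{(k)})$ denotes the $1$-dimensional stress space of the $1$D realization $p^{(k)}$. Splitting $p=(p',z)$ with $z=p^{(d)}$ a generic $1$D configuration algebraically independent of $p'$, this yields the working identity $S(G,p)=S(G,p')\cap S(G,z)$.

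I would then aim to show $S(G,p)=S(G_1,p)\oplus S(G_2,p)$, which is equivalent to $M$-separability in $\RR^d$ via the same partition. The inclusion $\supseteq$ is immediate from the structure of the stress equations. For $\subseteq$, take $\omega \in S(G,p)\subseteq S(G,p')$ and decompose $\omega=\omega_1+\omega_2$ with $\omega_i\in S(G_i,p')$ (supported on $E_i$), using the hypothesis. It then remains to show $\omega_i\in S(G_i,z|_{V_i})$. The $d$-th coordinate stress equation for $\omega$ at a vertex $v$ reads
\[
\sum_{u:\,uv\in E_1}(\omega_1)_{uv}(z_v-z_u)\;+\;\sum_{u:\,uv\in E_2}(\omega_2)_{uv}(z_v-z_u)\;=\;0.
\]
At $v\in V_i\setminus V_{3-i}$ only one sum contributes, delivering the desired $1$D stress equation for $\omega_i$ at $v$ directly. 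At shared vertices $v\in V_1\cap V_2$ both sums contribute and cancel against each other.

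The hard part will be this decoupling at shared vertices: showing that the cancellation above forces each partial sum to vanish separately. My strategy is to consider the linear maps $\Psi_i\colon S(G_i,p')\to \CC^{V_1\cap V_2}$ that send $\omega_i$ to its vector of partial stresses $\bigl(\sum_{u:\,uv\in E_i}(\omega_i)_{uv}(z_v-z_u)\bigr)_{v\in V_1\cap V_2}$, so that the cancellation reads $\Psi_1(\omega_1)=-\Psi_2(\omega_2)$ and the desired decoupling is equivalent to $\mathrm{Im}\,\Psi_1\cap \mathrm{Im}\,\Psi_2=\{0\}$. I would establish this trivial-intersection claim via a dimension argument exploiting that $z$ is algebraically independent of $p'$: the condition $\omega_i\in S(G_i,p')$ forces the weighted Laplacian of $G_i$ associated with $\omega_i$ to annihilate both the all-ones vector and the $d-1$ restricted coordinate vectors $p'^{(k)}|_{V_i}$, pinning down a $d$-dimensional generic subspace of its kernel, while $z|_{V_i}$ is an independent generic vector. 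Analyzing the restriction of these Laplacians to rows indexed by $V_1\cap V_2$ and using the algebraic independence of $z$ from $p'$ should force $\mathrm{Im}\,\Psi_1$ and $\mathrm{Im}\,\Psi_2$ to intersect only at the origin, completing the argument.
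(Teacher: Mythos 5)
Your setup is sound as far as it goes: arguing the contrapositive, the identity $S(G,p)=S(G,p')\cap S(G,z)$ obtained by grouping the columns of $R(G,p)$ by coordinate is correct, the inclusion $S(G_1,p)\oplus S(G_2,p)\subseteq S(G,p)$ is indeed automatic, and the problem does reduce to decoupling at the shared vertices exactly as you describe. But the step you yourself flag as the hard part is where the argument breaks, for a concrete reason: the claim $\mathrm{Im}\,\Psi_1\cap\mathrm{Im}\,\Psi_2=\{0\}$ is false in general (and it is only sufficient for, not equivalent to, the decoupling you need). Take $d=2$ and let $G$ consist of two triangles $G_1,G_2$ sharing a single vertex $v$, so that $E=E_1\cup E_2$ is a genuine separation in $\RR^1$ and $V_1\cap V_2=\{v\}$. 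Each $G_i$ is an $M$-circuit in $\RR^1$, so $S(G_i,p')$ is spanned by a single stress whose stress matrix is a rank-one symmetric matrix $\lambda_i w_iw_i^T$ with $w_i$ orthogonal to $\mathbf{1}$ and to $p'|_{V_i}$ and generically nowhere zero; hence $\Psi_i(\omega_i)=\lambda_i(w_i^Tz)(w_i)_v\neq 0$ for generic $z$, and both images are all of $\CC^{\{v\}}$. The theorem is not threatened here because $S(G,p)=0$ and the decoupling is vacuous --- which shows that any correct argument must exploit the constraint that $\omega_1+\omega_2$ lies in $S(G,p)$, not merely that $\omega_i\in S(G_i,p')$. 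Your genericity/dimension heuristic does not use that constraint and therefore cannot close the gap.

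The paper sidesteps this difficulty with two ideas that are missing from your plan. First, since $M$-connectivity is witnessed by circuits, it suffices to prove the statement when $G$ is an $M$-circuit in $\RR^d$, so that $S(G,p)$ is one-dimensional and spanned by a stress $\omega$ that is nonzero on every edge; a decomposition of $\omega$ into stresses supported on $E_1$ and on $E_2$ is then immediately contradictory. Second, instead of splitting off a single coordinate, the paper uses all $d$ coordinate projections $(G,p_i)$ simultaneously together with the identity $\sum_{i=1}^{d}R_e(G,p_i)=(d-1)R_e(G,p)$: a separation $A,B$ in $\RR^{d-1}$ forces $\sum_{e\in A}\omega_e R_e(G,p_i)=0$ for every $i$, and summing over $i$ shows that $\omega|_A$ extended by zero is itself a stress of $(G,p)$, contradicting the circuit property. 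To rescue your approach you would need either this circuit reduction or some other mechanism that uses the full hypothesis $\omega\in S(G,p)$ rather than a statement about all of $S(G_1,p')\times S(G_2,p')$.
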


\begin{proof}
It suffices to consider the $d\geq 2$ case and show that $G$ is $\mathcal{R}_{d-1}$-connected.
We may also assume that $G$ is an $\Rd$-circuit.
Consider a generic realization $(G,p)$ of $G$ in $\RR^d$. Since $G$ is an $\Rd$-circuit, there exists a unique, (up to scalar multiplication) non-zero equilibrium stress $\omega = (\omega_e)_{e \in E}$ of $(G,p)$, which is
non-zero on every edge $e\in E$.

For a contradiction, suppose that $G$ is $\mathcal{R}_{d-1}$-separable and let
$E_1,E_2$ be a separation, that is, $E_1\cup E_2=E$ and $r_{d-1}(E_1)+r_{d-1}(E_2)=r_{d-1}(E)$.

Let $(G,p_i)$ be the $(d-1)$-dimensional realization of $G$ obtained from $(G,p)$ by
a projection along the $i$-axis, for $1\leq i\leq d$. These projected
frameworks are also generic in $\RR^{d-1}$ and $\omega$ is a stress on each $(G,p_i)$.
Let $R(G,p_i)$ be the matrix obtained from the rigidity matrix of $(G,p)$ by replacing
the $|V|$ columns corresponding to coordinate $i$ by all-zero columns. Thus the
rigidity matrix of $(G,p_i)$ can be obtained from $R(G,p_i)$ by removing these zero columns.

Since $E_1,E_2$ is a separation, we must have $\sum_{e\in E_1} \omega_e R_e(G,p_i)=0$ for all $1\leq i\leq d+1$,
where $R_e(G,p_i)$ is the row of the edge $e$ in $R(G,p_i)$.
This gives 
\begin{equation*}
    (d-1) \sum_{e\in E_1} \omega_e R_e(G,p) = \sum_{i=1}^{d} \sum_{e\in E_1} \omega_e R_e(G,p_i) = 0,
\end{equation*}
implying that the restriction of $\omega$ to $E_2$ is a non-zero stress on a proper subframework of $(G,p)$. 
Then extending this restricted stress to all of $E$ by setting it zero on every $e \in E_2$ gives another stress of $(G,p)$, contradicting the uniqueness of $\omega$. 
\end{proof} 

Our next result is a characterization of $\Rd$-connected cone graphs. 
We shall use the fact that $G$ is an $\Rd$-circuit if and only if $G^v$ is an $\mathcal{R}_{d+1}$-circuit. Although this result seems to be folklore, we could not find any proofs in the literature, so we provide one (due to W.\ Whiteley \cite{whpriv}) for completeness. For this, we shall need the following geometric result about coning, which is implicit in the work of Whiteley \cite{coningwhiteley} (see also \cite[Theorem 3]{symmetricconing}). 

\begin{theorem}\label{thm:coninggeometric}
Let $d \geq 1$ and let $G = (V,E)$ be a graph. Let $(G^v,p)$ be a realization of the cone graph $G^v$ in $\RR^{d+1}$ and let $H \subseteq \RR^{d+1}$ be a hyperplane not containing $p(v)$ and not parallel to any of the lines $\overline{p(v)p(u)}, u \in V$. Finally, let $(G,p_H)$ be a framework in $\RR^d$ obtained by projecting $p(u),u \in V$ onto $H$ from $p(v)$ and then identifying $H$ with $\RR^d$. Then the following holds:  \[\rank(R(G^v,p)) = \rank(R(G^v,p_H)) + |V|.\] This also implies that \[r_{d+1}(G^v) = r_d(G) + |V|.\]
\end{theorem}

\begin{lemma}\label{lemma:mcircuitcone}
Let $d \geq 1$ and let $G$ be a graph and $G^v$ its cone graph. Then $G$ is an $\Rd$-circuit if and only if $G^v$ is an $\mathcal{R}_{d+1}$-circuit.
\end{lemma}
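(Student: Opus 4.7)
My strategy is to reduce both directions to the coning theorem, and to handle the cone edges by an explicit first-order stress computation near a cone-type realization of $G^v$.

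For the $(\Leftarrow)$ direction, suppose $G^v$ is an $M$-circuit in $\RR^{d+1}$. By the coning theorem, $G$ is dependent in $\RR^d$. For any $e \in E(G)$, the graph $G^v - e$ coincides with $(G-e)^v$; since $G^v$ is an $M$-circuit, $G^v - e$ is $M$-independent, so by coning $G - e$ is $M$-independent in $\RR^d$. Hence $G$ is an $M$-circuit in $\RR^d$.

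For the forward direction, assume $G$ is an $M$-circuit in $\RR^d$. The coning theorem yields $r_{d+1}(G^v) = r_d(G) + n$, where $n = |V(G)|$, and so the nullity of $G^v$ in $\RR^{d+1}$ is exactly $1$. Thus for any generic framework $(G^v, p')$ there is a unique (up to scalar) equilibrium stress $\Omega$, and it suffices to show $\Omega_e \neq 0$ for every $e \in E(G^v)$. If $e \in E(G)$, then $G^v - e = (G-e)^v$ is $M$-independent by coning, so $\Omega_e \neq 0$. The hard part is to establish the analogous nonvanishing for each cone edge $u_0 v$.

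Fix a cone edge $u_0 v$. I will exhibit a single realization at which $\Omega_{u_0 v} \neq 0$; this suffices, because then the row of $u_0 v$ in the rigidity matrix lies in the span of the other rows, so removing $u_0 v$ does not lower the generic rank, i.e., $u_0 v$ is not a coloop. Choose a generic $q : V(G) \to \RR^d$ with $M$-circuit stress $\omega$ (nowhere zero on $E(G)$) and consider the cone realization $p(v) = 0$, $p(u) = (q(u),1)$. A direct inspection of the equilibrium equation at each $u \in V(G)$ shows that its last coordinate forces $\Omega_{uv} = 0$, so at $p$ the stress is supported in $E(G)$ and equals a scalar multiple of $\omega$. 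I then perturb to $p'(u) = (q(u), 1 + \delta_u)$ and write $\Omega|_{E(G)} = \omega + \tilde\omega$, $\Omega_{uv} = \mu_u$ with $\tilde\omega, \mu = O(\delta)$. Extracting the first-order piece of the last-coordinate equation at $u$ yields
\begin{equation*}
\mu_u = -\sum_{w : uw \in E(G)} \omega_{uw}(\delta_u - \delta_w) + O(\delta^2).
\end{equation*}
Because $\omega_{uw} \neq 0$ for every edge of $G$ and $u$ has neighbours (as any vertex in an $M$-circuit has degree at least $d+1$), the right-hand side is a nontrivial linear form in $\delta$, so $\mu_{u_0} \neq 0$ for generic small $\delta$. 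The first-$d$-coordinate equations reduce to $\sum_w \tilde\omega_{uw}(q(u) - q(w)) = -\mu_u q(u)$, a generalized stress problem for $(G,q)$; the load-compatibility condition $\sum_u \mu_u q(u) = 0$ can be verified to hold automatically at first order, by a short algebraic manipulation using the stress condition $\sum_w \omega_{uw}(q(u)-q(w)) = 0$ for $\omega$.

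The main obstacle is precisely this cone edge case. The cone-type realization is the one best adapted to the coning theorem, yet its stresses vanish identically on every cone edge, so nothing can be read off directly. The key technical step is therefore the first-order expansion above, together with the consistency verification for the first-$d$-coordinate equations; without that check it would appear that the stress ``had'' to remain zero on cone edges under the perturbation, and the argument would collapse.
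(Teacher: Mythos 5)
Your proof is correct, and in outline it follows the same strategy as the paper's: the backward direction is identical, and in the forward direction both arguments reduce to showing that each cone edge $uv$ is not a coloop by exhibiting a single (non-generic) realization of $G^v$ of full generic rank that carries an equilibrium stress nonvanishing on $uv$, obtained by perturbing a cone-type realization and reading off the last-coordinate equilibrium at $u$. The executions of that key step differ in a way worth comparing. The paper places $G$ generically in the hyperplane $x_{d+1}=0$ with $v$ outside it and perturbs the height of a \emph{single} neighbour $w$ of $u$; the last-coordinate equilibrium at $u$ then has exactly two nonvanishing terms, and since $G^v-uw=(G-uw)^v$ is independent the stress is nonzero on $uw$, hence on $uv$ --- an exact, one-line computation. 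You instead perturb the heights of all vertices and extract $\mu_{u_0}$ from a first-order expansion of the stress in $\delta$. This works, but it obliges you to justify that the one-dimensional stress space varies continuously near $\delta=0$, equivalently that $\rank R(G^v,p')$ stays equal to $|E(G^v)|-1$ for small $\delta$ (lower semicontinuity of rank plus the generic upper bound); you should state this explicitly, since it is also what lets you pass from ``the row of $u_0v$ is spanned by the others at $p'$'' to a conclusion about the \emph{generic} rank of $G^v-u_0v$. Conversely, the ``load-compatibility'' verification you single out as essential is not needed at all: the existence of a nonzero stress at $p'$ is already guaranteed by the rank count, so every equilibrium equation --- in particular the one at $v$, which yields $\sum_u \mu_u q(u)=0$ --- is automatically satisfied by the actual stress, and your last-coordinate equation is merely a necessary condition that pins down the leading order of $\mu_{u_0}$. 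Dropping that consistency paragraph and adding the rank-constancy remark would make your argument complete; the paper's single-vertex perturbation sidesteps both issues.
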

\begin{proof}
If $G^v$ is an $\mathcal{R}_{d+1}$-circuit, then by Theorem \ref{thm:coninggeneric}(a) $G$ is not $\Rd$-independent. On the other hand, for every edge $uw \in E(G)$ we have that $(G - uw)^v = G^v - uw$ is $\mathcal{R}_{d+1}$-independent, so by the same theorem $G - uw$ is $\Rd$-independent. This shows that $G$ is an $\Rd$-circuit, as desired. 

Now let $G$ be an $\Rd$-circuit. Again by Theorem \ref{thm:coninggeneric}(a), $r_{d+1}(G^v) = |E(G^v)| - 1 = r_{d+1}(G^v - uw)$ for any edge $uw \in E(G)$. Thus, we only need to prove that $r_{d+1}(G^v - uv) = r_{d+1}(G^v)$ for any edge $uv$ incident to the cone vertex.

Let $uv \in E(G^v)$ be such an edge and consider a framework $(G^v,p)$ in which the vertices of $G$ lie in the $x_{d+1} = 0$ hyperplane in a generic position (when viewed as a framework in $\RR^d$) and $p(v)$ lies outside of this hyperplane. Let $w$ be a neighbour of $u$ in $G$. Theorem \ref{thm:coninggeometric} and the fact that the subframework $(G,p|_{V(G)})$ is generic imply that
\begin{equation*}
\rank(R(G^v,p)) = r_{d+1}(G^v) = r_{d+1}(G^v - uw) = \rank(R(G^v - uw,p)).    
\end{equation*}
Consider a framework $(G,q)$ obtained from $(G,p)$ by changing the last coordinate of $p(w)$ by a sufficiently small amount, so that $\rank(R(G^v - uw,q)) = r_{d+1}(G^v) = \rank(R(G^v,q))$ still holds. Then there is an equilibrium stress $\omega$ of $(G^v,q)$ that is non-zero on $uw$. This stress must be non-zero on $uv$ as well, since in this framework, the rest of the edges incident to $u$ all lie in the $x_{d+1} = 0$ hyperplane. This shows that 
\begin{equation*}
\rank(R(G^v - uv,q)) = \rank(R(G^v,q)) = r_{d+1}(G^v),   
\end{equation*}
which implies $r_{d+1}(G^v - uv) = r_{d+1}(G^v)$, as desired.
\end{proof}


\begin{theorem}\label{lemma:conemconnected}
Let $G$ be a graph and let $G^v$ denote its cone graph. Then
$G^v$ is $\mathcal{R}_{d+1}$-connected if and only if
$G$ is connected and it has no $\Rd$-bridges.
\end{theorem}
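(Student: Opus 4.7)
The plan rests on the rank identity $r_{d+1}(G^v) = r_d(G) + |V(G)|$, a standard consequence of the coning theorem. The lower bound is direct: given a base $B$ of $\mathcal{R}_d(G)$, the cone of the subgraph $(V(G), B)$ is exactly $B$ together with all $|V(G)|$ cone edges, which the coning theorem certifies as independent in $\mathcal{R}_{d+1}(G^v)$. For the matching upper bound I would use matroid exchange: were there an independent $F \subseteq E(G^v)$ with $|F| > r_d(G) + |V(G)|$, applying the exchange axiom to $F$ and $F' = B \cup \{\text{all cone edges}\}$ would produce an edge $e \in F \setminus F' \subseteq E(G) \setminus B$ with $F' \cup \{e\}$ independent in $\RR^{d+1}$; but $B \cup \{e\}$ is dependent in $\mathcal{R}_d(G)$ (since $B$ is a base), so coning it yields a dependent subset of $F' \cup \{e\}$, a contradiction. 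This upper bound is the main technical step.

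For the forward direction, suppose $G^v$ is $M$-connected in $\RR^{d+1}$. If $G$ were disconnected with $V(G) = V_1 \sqcup V_2$ and $G_i = G[V_i]$, then $E(G^v) = E(G_1^v) \sqcup E(G_2^v)$, and the rank identity combined with additivity of $r_d$ on vertex-disjoint graphs gives $r_{d+1}(G^v) = r_{d+1}(G_1^v) + r_{d+1}(G_2^v)$, exhibiting $G^v$ as $M$-separable and contradicting the hypothesis. Similarly, if some $e \in E(G)$ were an $M$-bridge of $G$ in $\RR^d$, then $r_d(G - e) = r_d(G) - 1$, and applying the identity to $G - e$ yields $r_{d+1}(G^v - e) = r_{d+1}(G^v) - 1$, making $\{e\}$ its own $M$-component of $G^v$; since $G$ contains the edge $e$ and hence at least two vertices, $G^v$ has further edges, so this again contradicts $M$-connectivity.

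For the reverse direction, assume $G$ is connected and has no $M$-bridges in $\RR^d$. The case $|V(G)| = 1$ is immediate, since then $G^v$ is a single edge, trivially $M$-connected. Otherwise, for each edge $e = uw \in E(G)$ the non-$M$-bridge hypothesis supplies an $M$-circuit $C$ of $G$ in $\RR^d$ with $e \in C$, and Lemma \ref{lemma:mcircuitcone} then upgrades $C^v$ to an $M$-circuit of $G^v$ in $\RR^{d+1}$ containing $e$, $uv$, and $wv$; these three edges therefore share a common $M$-component of $G^v$. Chaining this relation along a path $u_1 = x_0, x_1, \ldots, x_k = u_2$ in the connected graph $G$ places all cone edges $x_i v$ into a single $M$-component, which then absorbs every non-cone edge of $G^v$ by the same triple relation. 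Hence $G^v$ has a unique $M$-component, i.e., is $M$-connected, completing the argument.
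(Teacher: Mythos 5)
Your proof is correct, and the ``if'' direction is essentially the paper's argument verbatim (cone each $M$-circuit through an edge via Lemma \ref{lemma:mcircuitcone}, then chain cone edges along paths using connectivity). Where you genuinely diverge is the ``only if'' direction. You first establish the rank identity $r_{d+1}(G^v) = r_d(G) + |V(G)|$ --- your exchange argument for the upper bound is sound, since any augmenting edge must come from $E(G)\setminus B$ and its addition to $B$ cones to a dependent set --- and then both failure modes fall out of direct rank arithmetic: vertex-disjoint components give an additive splitting $r_{d+1}(G^v)=r_{d+1}(G_1^v)+r_{d+1}(G_2^v)$ of the cone's edge set into nonempty parts, and an $M$-bridge of $G$ cones to an $M$-bridge of $G^v$. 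The paper instead avoids the rank identity: for disconnectedness it invokes the fact that $M$-connected graphs are $2$-connected (which leans on Theorem \ref{theorem:mconndown} and the one-dimensional characterization) together with the observation that the cone of a disconnected graph has $v$ as a cut vertex, and for the bridge it uses a rigidity-based argument (cone preserves rigidity; if $G$ is not rigid, first pad it with $M$-bridges). Your route is arguably cleaner and more self-contained, resting only on the independence-preservation form of the coning theorem, at the cost of proving the rank identity up front; the paper's route reuses machinery it has already developed elsewhere. Both are complete.
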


\begin{proof}

For the ``only if'' direction, observe that coning takes an $\Rd$-bridge of $G$ to an $\mathcal{R}_{d+1}$-bridge of $G^v$, which follows from Theorem \ref{thm:coninggeometric},
and that an $\Rd$-connected graph on at least two edges has no $\Rd$-bridges. Moreover, $\Rd$-connected graphs are $2$-connected,\footnote{This follows e.g.\ from Theorem \ref{theorem:mconndown} by recalling that a graph is $\mathcal{R}_1$-connected if and only if it is $2$-connected.} while the cone graph of a disconnected graph is not.

To prove the ``if'' direction, 
let us first observe that for any edge $xy$ of $G$, $xy$ is in the same $\mathcal{R}_{d+1}$-connected component of $G^v$ as $vx$ and $vy$. Indeed, since $xy$ is not an $\Rd$-bridge in $G$, it is contained in some subgraph of $G$ that is an $\Rd$-circuit, and by Lemma \ref{lemma:mcircuitcone}, the cone of this subgraph is an $\mathcal{R}_{d+1}$-circuit which contains all three of these edges.

Thus it is sufficient to prove that any pair of cone edges $vx, vy$ is in the same $\mathcal{R}_{d+1}$-connected component of $G^v$. By assumption, there is a path $x = u_0,u_1,\dots,u_k = y$ in $G$. Now by the previous observation $vu_i$ and $vu_{i+1}$ are in the same $\mathcal{R}_{d+1}$-connected component of $G^v$, for all $0 \leq i < k$. By transitivity, we get that $vx$ and $vy$ are also in the same $\mathcal{R}_{d+1}$-connected component, as desired.
\end{proof}

 For a graph $G=(V,E)$ let $\dof_d(G)=d|V|-\binom{d+1}{2}-r_d(G)$ denote its
``degrees of freedom'' in the context of $d$-dimensional generic rigidity.
The next theorem verifies a general combinatorial
property of highly connected $\Rd$-separable graphs
and may be useful in the construction of further
families of examples. 

\begin{theorem}
\label{count}
Let $d \geq 1$ and let $G$ be a $(d+1)$-connected and 
redundantly rigid graph in $\RR^d$.
Suppose that $G$ is $\Rd$-separable and let $H_1$, $H_2$,...,$H_q$ be the
$\Rd$-connected components of $G$.
Then $$\sum_{i=1}^q \dof_d(H_i) \geq \binom{d+1}{2}.$$
\end{theorem}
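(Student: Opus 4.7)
The plan is to reduce the statement to a purely combinatorial inequality about vertex sharing between the $M$-components, and then prove that inequality by a case split that uses $(d+1)$-connectivity in one case and the minimum size of an $M$-circuit in the other.

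First I would reformulate the claim. Let $n=|V|$, $n_i=|V(H_i)|$, and $a_v=|\{i:v\in V(H_i)\}|$. Since $G$ is $M$-separable we have $r_d(G)=\sum_i r_d(H_i)$, and since $G$ is $(d+1)$-connected and redundantly rigid it is in particular rigid, so $r_d(G)=dn-\binom{d+1}{2}$. Expanding the definition of $\dof_d$ and substituting gives
\begin{equation*}
\sum_{i=1}^q \dof_d(H_i) \;=\; d\sum_{v\in V}(a_v-1) \;-\; (q-1)\binom{d+1}{2},
\end{equation*}
so the theorem is equivalent to the combinatorial bound $\sum_v (a_v-1) \ge q(d+1)/2$. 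I would also record the auxiliary size bound $n_i\ge d+2$: redundant rigidity rules out $M$-bridges, so each $H_i$ has at least two edges and therefore contains an $M$-circuit of $\mathcal{R}_d$; but $K_{d+1}$ is $M$-independent (its rank and edge count both equal $\binom{d+1}{2}$), so the smallest $M$-circuit in $\mathcal{R}_d$ is $K_{d+2}$.

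I would then split into two cases. Suppose first that no $V(H_i)$ equals $V$, and set $S_i := V(H_i)\cap \bigcup_{j\ne i}V(H_j)$. Since the $M$-components partition $E(G)$, any edge between $V(H_i)\setminus S_i$ and $V\setminus V(H_i)$ would belong to some $H_j$ with $j\ne i$, forcing its endpoint in $V(H_i)$ to lie in $V(H_i)\cap V(H_j)\subseteq S_i$, a contradiction. So when both $V(H_i)\setminus S_i$ and $V\setminus V(H_i)$ are nonempty, $S_i$ is a vertex separator in $G$ and $|S_i|\ge d+1$ by $(d+1)$-connectivity; otherwise $S_i=V(H_i)$, giving $|S_i|\ge d+2$. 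Summing yields $\sum_i|S_i|\ge q(d+1)$. A double count gives $\sum_i|S_i|=\sum_v(a_v-1)+|W|$, where $W:=\{v:a_v\ge 2\}$, and combining with the trivial bound $\sum_v(a_v-1)\ge|W|$ yields $2\sum_v(a_v-1)\ge q(d+1)$, as required. In the remaining case where $s\ge 1$ of the $M$-components span $V$, the identity $\sum_v(a_v-1)=(s-1)n+\sum_{i:V(H_i)\ne V}n_i$ combined with $n,n_i\ge d+2$ gives $\sum_v(a_v-1)\ge (q-1)(d+2)$, and $(q-1)(d+2)\ge q(d+1)/2$ holds for every $q\ge 2$ by elementary arithmetic.

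The main obstacle is that the rank-theoretic identity obtained after the initial reduction already absorbs everything $M$-separability can contribute and furnishes no positive lower bound by itself; the required estimate must be supplied combinatorially. The delicate part is producing the factor $q$ in $q(d+1)/2$ rather than $q-1$: the naive use of the separator inequality $|S_i|\ge d+1$ alone gives only an ``off by $\binom{d+1}{2}$'' bound, and the extra amount is recovered by pairing $\sum_i|S_i|\ge q(d+1)$ with the trivial overlap estimate $\sum_v(a_v-1)\ge |W|$. Handling the spanning $M$-components requires a separate (easier) argument because the separator bound collapses for them, but those are controlled by the size bound $n_i\ge d+2$.
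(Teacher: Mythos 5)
Your proof is correct and is essentially the paper's argument in different bookkeeping: the same three inputs (the shared-vertex sets $S_i$, which are the paper's $Y_i$, are separators of size at least $d+1$ by $(d+1)$-connectivity; every shared vertex lies in at least two components; and every $M$-connected component contains an $M$-circuit and hence has at least $d+2$ vertices) are combined with $r_d(G)=\sum_i r_d(H_i)$ and Gluck's formula, only repackaged as the equivalent inequality $\sum_v (a_v-1)\geq q(d+1)/2$ instead of the paper's direct chain of (in)equalities built from bases $B_i$ of the components. If anything, your version is slightly more careful, since you explicitly handle the degenerate cases where $S_i=V(H_i)$ or $V(H_i)=V$, in which the separator bound does not literally apply, whereas the paper asserts $y_i\geq d+1$ from $(d+1)$-connectivity without making this case distinction.
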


\begin{proof}
Let $X_i=V(H_i)-\cup_{j\not= i}V(H_j)$
denote the set of vertices belonging to no other $\Rd$-connected component than $H_i$,
and let $Y_i=V(H_i)-X_i$ for $1\leq i\leq q$.
Let
$n_i=|V(H_i)|$, $x_i=|X_i|$, $y_i=|Y_i|$. Clearly,
$n_i=x_i+y_i$ and
$|V|=\sum_{i=1}^q x_i + |\cup_{i=1}^q Y_i|$. Moreover, we have
$\sum_{i=1}^q y_i \geq 2|\cup_{i=1}^q Y_i|$.
Since $G$ is redundantly rigid,
every edge of $G$ is in some $\Rd$-circuit. Every $\Rd$-circuit
has at least $d+2$ vertices. Thus we have that $n_i\geq d+2$ for
$1\leq i\leq q$. Furthermore, since
$G$ is $(d+1)$-connected,
$y_i\geq (d+1)$ holds for all $\Rd$-components.

Let us choose a base $B_i$ in each rigidity matroid ${\cal R}(H_i)$.
Using the above inequalities we have
\begin{align*}
d|V|-\binom{d+1}{2}&= |\cup_{i=1}^q B_i|=\sum_{i=1}^q |B_i|
=\sum_{i=1}^q \left(dn_i-\binom{d+1}{2}-\dof_d(H_i)\right) \\
&=d\sum_{i=1}^q n_i - \binom{d+1}{2}q - \sum_{i=1}^q \dof_d(H_i) \\
&=\left(d\sum_{i=1}^q x_i + \frac{d}{2}\sum_{i=1}^q y_i\right)
+ \left(\frac{d}{2}\sum_{i=1}^q y_i-\binom{d+1}{2}q\right) - \sum_{i=1}^q \dof_d(H_i) \\
&\geq d|V| - \sum_{i=1}^q \dof_d(H_i)
\end{align*}
Thus we must have $\sum_{i=1}^q \dof_d(H_i) \geq \binom{d+1}{2}$, as claimed.
\end{proof}

The graph of Figure \ref{figure:mseparable2}(a) shows  that Theorem \ref{count} is, in some sense, tight: it has a unique non-rigid $\mathcal{R}_3$-connected component with six degrees of freedom. 
Note that for $d=1,2$ the $\Rd$-connected components of a graph are rigid. Thus the theorem implies that for $d\leq 2$ the $(d+1)$-connected redundantly rigid graphs
are $\Rd$-connected, which was shown in \cite[Theorem 3.2]{JJconnrig} by a similar argument.

\section{Acknowledgements}

This work was supported by the Hungarian Scientific Research Fund grant no. K 135421,
and the project Application Domain Specific Highly Reliable IT Solutions which has been
implemented with the support provided from the National Research, Development and
Innovation Fund of Hungary, financed under the Thematic Excellence Programme 
TKP2020-NKA-06 (National Challenges Subprogramme) funding scheme.


\begin{thebibliography}{99}

\bibitem{asimowroth} {\sc L. Asimow and B. Roth}, The rigidity of graphs, {\it
  Trans. Amer. Math. Soc.}, Vol. 245, 279--289, 1978.

\bibitem{K55} {\sc D. I. Bernstein, S.J. Gortler}, $K_{5,5}$ is fully reconstructible in $\CC^3$, {arXiv preprint}, 2021.

\bibitem{lamannumber} {\sc J. Capco, M. Gallet, G. Grasegger, C. Koutschan, N. Lubbes, and
J. Schicho}, The number of realizations of a Laman graph, {\it SIAM Journal on Applied Algebra and Geometry},  Vol. 2, No. 1, 94--125, 2018.


\bibitem{con2} 
{\scshape R. Connelly},
\newblock  On generic global rigidity,
\newblock Applied geometry and discrete mathematics, DIMACS Ser.
Discrete Math. Theoret. Comput. Sci., 4, Amer. Math. Soc.,
Providence, RI, 147--155, 1991.

\bibitem{connelly} {\scshape R. Connelly},
\newblock
Generic global rigidity,
\newblock {\itshape Discrete Comput. Geom.} Vol. 33, 549--563, 2005.


\bibitem{coning} {\scshape R. Connelly and W. Whiteley},
Global rigidity: the effect of coning,
{\itshape Discrete Comput. Geom.} Vol. 43, 717–735, 2010.

\bibitem{fantechi} {\scshape B. Fantechi}, {On the superadditivity of secant defects}, {\itshape Bull. Soc. Math. France}, Vol. 118, 85--100, 1990.

\bibitem{GJ} {\scshape D. Garamv\"olgyi and T. Jord\'an}, Graph reconstruction
from unlabeled edge lengths, 
\newblock {\itshape Discrete Comput. Geom.}, Vol. 66, 344–385, 2021.

\bibitem{Gluck}
{\scshape H.~Gluck},
\newblock Almost all simply connected closed surfaces are rigid.
\newblock In {\em Geometric Topology}, volume 438 of {\em Lecture Notes in
  Mathematics}, 225--239. Springer-Verlag, 1975.
  

\bibitem{GHT} {\sc S.J. Gortler, A. Healy, and D.P. Thurston},
{Characterizing generic global rigidity},
{\it American Journal of Mathematics}, Vol. 132, No. 4, 897--939, 2010.


\bibitem{GT} {\sc S.J. {Gortler} and D.P. {Thurston}},
Generic global rigidity in complex and pseudo-Euclidean spaces,
in: Rigidity and Symmetry, Springer New York, 131--154, 2014.

\bibitem{GTT}
{\sc S.J. {Gortler}, L. {Theran}, and D.P. {Thurston}},
{Generic unlabeled global rigidity},
{\it Forum of Mathematics, Sigma}, Vol. 7, 2019.


\bibitem{hend} {\scshape B. Hendrickson},
\newblock Conditions for unique graph realizations,
\newblock {\itshape SIAM J. Comput.}, Vol. {21}, No. 1, 65--84, 1992.

\bibitem{JJconnrig} {\scshape B. Jackson and T. Jord\'an,}
\newblock Connected rigidity matroids and unique
realizations of graphs, 
{\it J. Combin. Theory Ser. B}, Vol. 94, 1--29, 2005.





\bibitem{JKT}{\sc T. Jord\'an, C. Kir\'aly, and S. Tanigawa}, Generic global rigidity of body-hinge frameworks,
{\it J. Comb. Theory} Ser. B., Vol. 117, 59--76, 2016.

\bibitem{JW} {\sc T. Jord\'an and W. Whiteley,}
Global rigidity,  {\it in} J. E.
Goodman, J. O'Rourke, and Cs. D. T\'oth (eds.), {\it Handbook of Discrete and Computational
Geometry,} 3rd ed., CRC Press, Boca Raton, 1661--1694, 2018.

\bibitem{JT} {\sc T. Jord\'an and S.Tanigawa}, Global rigidity of triangulations with braces, {\it J. Comb. Theory} Ser. B., Vol. 136, 249--288, 2019.


\bibitem{oxley} {\scshape J.G. Oxley},
\newblock {\itshape Matroid theory},
\newblock Oxford Science Publications.
The Clarendon Press, Oxford University Press, New York, 1992.

\bibitem{RST} {\scshape Z. Rosen, J. Sidman, and L. Theran},
 Algebraic matroids in action, {\itshape
The American Mathematical Monthly}, 
Vol. 127, No. 3, 199-216, 2020.

\bibitem{symmetricconing} {\scshape B. Schulze and W. Whiteley},
Coning, Symmetry and Spherical Frameworks, {\itshape Discrete \& Computational Geometry}, Vol. 48, 622--657, 2012.




\bibitem{shafarevich} {\sc I.R. Shafarevich},
{\it Basic Algebraic Geometry 1}, {Springer-Verlag Berlin Heidelberg}, 2013.


\bibitem{coningwhiteley}  {\sc W. Whiteley},
{Cones, infinity and one-story buildings}, {\it Structural Topology}, Vol. 8, 53--70, 1983.

\bibitem{Whlong} {\scshape
W. Whiteley},
\newblock Some matroids from discrete applied geometry.
\newblock Matroid theory (Seattle, WA, 1995), 171--311,
Contemp. Math., Vol. 197, Amer. Math. Soc., Providence, RI, 1996.

\bibitem{whpriv} {\scshape
W. Whiteley}, private communication, 2016.

\bibitem{zak} {\scshape F.L. Zak}, {Linear systems of hyperplane sections on varieties of small codimension}, {\it Funkt. Anal. Prilozhen}, Vol. 19, 1--10, 1985.



\end{thebibliography}
\end{document}